\DeclareMathOperator*{\rank}{rank}
\DeclareMathOperator{\eps}{\varepsilon}
\newtheorem{theorem}{Theorem}
\newtheorem{proposition}[theorem]{Proposition}
\newtheorem{lemma}[theorem]{Lemma}
\newtheorem{define}[theorem]{Definition}
\newtheorem{example}[theorem]{Example}
\newtheorem{assumption}[theorem]{Assumption}
\newtheorem{problem}[theorem]{Problem}
\newtheorem{remark}[theorem]{Remark}
\newenvironment{proof}{{\it Proof :~}}{\hfill$\diamondsuit$\\}
\setlist[enumerate]{leftmargin=.5in}
\setlist[itemize]{leftmargin=.5in}
\title{A biology-inspired approach to the positive integral control of positive systems - the antithetic, exponential, and logistic integral controllers}
\author{Corentin Briat\thanks{NCIS, Switzerland  (corentin@briat.info, www.briat.info).}}
\date{}
\providecommand{\blue}[1]{\color{black}{#1}\color{black}\hspace{0pt}}
\begin{document}

\maketitle

\begin{abstract}
 The integral control of positive systems using nonnegative control input is an important problem arising, among others, in biochemistry, epidemiology and ecology. An immediate solution is to use an ON-OFF nonlinearity between the controller and the system. However, this solution is only available when controllers are implemented in computer systems. When this is not the case, like in biology, alternative approaches need to be explored. Based on recent research in the control of biological systems \cite{Briat:15e,Briat:16a}, we propose to develop a theory for the integral control of positive systems using nonnegative controls based on the so-called \emph{antithetic integral controller} and two \emph{positively regularized integral controllers}, the so-called \emph{exponential integral controller} and \emph{logistic integral controller}. For all these controllers, we establish several qualitative results, which we connect to standard results on integral control. We also obtain additional results which are specific to the type of controllers. For instance, we show an interesting result stipulating that if the gain of the antithetic integral controller is suitably chosen, then the local  stability of the equilibrium point of the closed-loop system does not depend on the choice for the coupling parameter, an additional parameter specific to this controller. Conversely, we also show that if the coupling parameter is suitably chosen, then the equilibrium point of the closed-loop system  is locally stable regardless the value of the gain.  For the exponential integral controller, we can show that the local stability of the equilibrium point of the closed-loop system is independent of the gain of the controller and the gain of the system. The stability only depends on the exponential rate of the controller, again a parameter that is specific to this type of controllers. Several examples are given for illustration.\\

 \noindent\textit{Keywords.} Positive systems; positive control; antithetic integral controller; regularized integral control; control of biological systems; Cybergenetics
\end{abstract}

\section{Introduction}

Integral control is undoubtedly a cornerstone of control theory as it allows for the tracking of constant reference signals while being able to reject constant disturbances, an impressive feat despite to its very simple structure. It is, therefore, not surprising to find this structure in many control algorithms for industrial processes in the so-called Proportional-Integral-Derivative (PID) controllers \cite{Astrom:95}. Interestingly, integral control is not only of engineering interest but has also been shown to arise in living organisms such as in bacteria \cite{Yi:00,Briat:15e} or mammals \cite{ElSamad:02}. Synthetic genetic regulation circuits have also recently attracted a lot of interest in the nascent field of Cybergenetics, the field pertaining on the modeling, analysis and control of biological processes using \emph{in-silico} and \emph{in vivo} controllers; see e.g. \cite{Oishi:10b,Briat:12c,Briat:13h,Chen:13,Yordanov:14,Briat:15e,DelVecchio:16,Briat:16a,DelVecchio:17,Cuba:17,Briat:17ACS}.

We propose to consider in this paper the integral control of positive systems with the constraint that the control input be nonnegative. Positive systems \cite{Farina:00} are an important class of dynamical systems whose state is confined in the nonnegative orthant. Such systems have been considered for the modeling of a wide variety of real world processes such as the modeling of populations \cite{Murray:02}, physiological systems \cite{Hovorka:04}, biochemical systems \cite{Briat:12c,Briat:13h,Briat:13i,Briat:15e,Briat:16a}, communication networks \cite{Shorten:06,Briat:13f}, etc. The control/controllability/reachability of systems with positive inputs have been well studied; see e.g. \cite{Saperstone:73,Coxson:87,Heemels:98,Leyva:14,Eden:16} and the references therein. The nonnegative proportional-integral control of linear positive systems have been studied in \cite{Briat:12c,Briat:13h,Briat:19:Opto} in the context of the control of the moments equation in biochemical reaction networks. The idea is to place between the controller and the system an ON/OFF static nonlinearity \cite{Goncalves:07} which zeroes the control input when it is negative. This makes the global stability/stabilization problem addressable in the linear system setting using, for instance, the Popov criterion \cite{Popov:61,Briat:12c,Briat:13h,Guiver:15,Briat:19:Opto} or surface Lyapunov functions \cite{Goncalves:07}.  However, the cases of nonlinear systems or stochastic systems are way more involved and there seems to be no clear way on how to address this problem in a fairly general setting.

To palliate this, we propose to use two alternative approaches. The first one is based on the use of the antithetic integral controller recently introduced in \cite{Briat:15e} in the context of the \emph{in vivo} control of single-cells and cell populations; see also \cite{Aoki:19} for an experimental validation of the approach. This controller has the benefit to implement an integral action and to naturally produce a nonnegative control input without the use of any additional static nonlinearity. The price to pay, however, is its nonlinear behavior (yet polynomial of order two) and its higher dimensionality (dimension two). However, its polynomial structure makes it interesting for then controlling polynomial/rational nonlinear systems since it makes it possible to use numerical methods such as the use of polynomial Lyapunov functions combined with optimization tools such as sum of squares programming \cite{Parrilo:00,sostools3}. This particular integral control structure works in both the deterministic and the stochastic settings. In fact, the antithetic integral controller is the simplest integral controller that is able to properly work in a stochastic setting. The performance analysis of the antithetic integral controller and its generalizations have also been theoretically studied in \cite{Qian:18,Olsman:19,Olsman:19b,Chevalier:19} in a deterministic \blue{and stochastic} settings. \blue{In those papers, the stability and robustness properties of the antithetic integral controller is studied in various parametric regimes for both the controller and system parameters, notably in the presence of degrading controller species. In particular, \cite{Chevalier:19} focuses on the extension of  those ideas to PID control.}

\blue{We show, in the linear systems case, that provided the system matrix is Hurwitz stable}, we can always find an antithetic integral controller that makes the desired equilibrium point locally exponentially stable. Compared to the standard integral controller for which only the gain $k$ needs to be tuned so that the closed-loop system is (locally) asymptotically/exponentially stable, the antithetic integral controller has an extra degree of freedom $\eta$, which we refer here to as the \emph{coupling parameter}. \blue{We show, again in the case of linear systems,} that when the gain $k$ of the integral controller lies within some interval whose bounds can be exactly characterized and accurately computed, then the closed-loop system is  (locally) asymptotically/exponentially stable regardless the choice for the coupling parameter $\eta$. \blue{This result complements existing ones \cite{Qian:18,Briat:18:Interface,Olsman:19,Olsman:19b} where the strong coupling, or strong binding, assumption (i.e. infinite coupling parameter) was used as a simplifying procedure to obtain simpler stability conditions, explicit expressions for the variance in stochastic systems or as a mean to recover disturbance rejection properties when the controller state is subject to exponential decay. However, even if in biological systems binding constants can be large, they remain finite and it is unclear whether we can extrapolate those results to the finite coupling parameter case with no proper mathematical ground. We show here that, in fact, we can extrapolate those results as long as stability is concerned since the strong binding regime is the most restrictive one in the sense that if the equilibrium system is stable when the coupling parameter is infinite then it is necessarily stable when the coupling parameter remains finite.} \blue{Still in the case of linear systems, a dual result establishing an interval of values for $\eta$ for which the closed-loop system is (locally) asymptotically/exponentially stable regardless the choice for the gain $k$. All those results naturally extend to certain classes of nonlinear systems having a unique locally exponentially stable hyperbolic equilibrium point corresponding to possible output values. Note that in this case the linearized system may not be positive which is not a problem since the Metzler structure of the system matrix is not used in any of the proof. In this regard, the obtained results therefore readily apply to the broader class of externally positive systems. Examples considering nonlinear systems coming from biochemistry and epidemiology are provided for illustration.}

The second approach is based on the use of regularizing functions, a novel flexible concept, allowing to impose certain properties to a controller such as the nonnegativity or the saturation of the control input. Regularizing functions are defined as differentiable invertible strictly increasing functions from $\mathbb{R}$ to some connected interval of $\mathbb{R}$. When the image set is a connected interval of the nonnegative orthant, the regularizing function is said to be a \emph{positively regularizing function}. When placed between the controller and the system, a positively regularizing function will make the control input nonnegative as the ON-OFF function would also do. However, the invertibility and differentiability of regularizing functions makes it possible to obtain a dynamical model for the controller and the regularizing function all together. Interestingly, when the regularizing function is chosen to be the exponential function or the logistic function, then the corresponding dynamical model is of polynomial form. This is a clear advantage over the use of an ON-OFF nonlinearity for the same reasons as for the antithetic integral control as it makes possible to consider those controllers in the nonlinear setting. \blue{It is worth mentioning that the exponential integral controller is not new and has been studied in the past, for instance, in \cite{Shoval:10,Drengstig:12,Briat:16a,Karin:16,Fejeld:17}. In particular, \cite{Shoval:10} studied its role in fold-change detection whereas \cite{Fejeld:17} focused on the controller ability to reject non-constant disturbances. In the present paper, we observe for the first time that this controller can be obtained using positively regularizing functions and we mainly focus on its stability properties and on the characterization of the different classes of systems that can be controlled using such a controller motif.} On the other hand, the logistic controller does not seem to have been studied in the past. These two controllers are expected to fail in the stochastic setting because of the presence of an absorbing state located at zero. In the deterministic case, however, the equilibrium point is shown to be structurally unstable. Analogous results to the antithetic integral controller are then obtained and illustrated through several examples. In particular, the exponential controller is shown to possess an intrinsic robustness property with respect to the gain at zero frequency of the system. The logistic integral controller does not enjoy such a property because of the presence of the saturation.\\

\noindent\textbf{Outline.} The paper is structured as follows. Positive integration and positive integral control are introduced in Section \ref{sec:PIPIC}. The control of positive linear systems using antithetic integral control is addressed in Section \ref{sec:AIC_linear} whereas the case of nonlinear positive systems is considered in Section \ref{sec:AIC_nonlinear}. The theory pertaining on the exponential and logistic integral controllers is developed in Section \ref{sec:exponential_linear} and Section \ref{sec:logistic_linear}, respectively. A concluding discussion is given in Section \ref{sec:discussion}.


\section{Positive integration}\label{sec:PIPIC}

We introduce in this section the concept of positive integration as well several positive integration structures.

\subsection{Antithetic integration}\label{sec:linpos1}

The antithetic integrator \cite{Oishi:10b,Briat:15e} is defined as
\begin{equation}\label{eq:mainsystK2}
  \begin{array}{lcl}
    \dot{z}_1(t)&=&w_1(t)-\eta z_1(t)z_2(t)\\
    \dot{z}_2(t)&=&w_2(t)-\eta z_1(t)z_2(t)\\
    z(0)&=&z_0
  \end{array}
\end{equation}
where $z:=(z_1,z_2)\in\mathbb{R}_{\ge0}^2$  is the state of the integrator, $w_1,w_2:\mathbb{R}_{\ge0}\to\mathbb{R}_{\ge0}$ are the inputs, $z_0\in\mathbb{R}_{\ge0}^2$ is the initial condition and $\eta>0$ is the coupling parameter. It is immediate to see that the state remains nonnegative whenever the initial condition and the inputs are nonnegative as well. Note also that this integrator has two states and two inputs, which is a clear difference with the standard integrator. However, it is possible to reconcile them by virtue of the following result:
\begin{proposition}
  The signal $\tilde z:=z_1-z_2$ has dynamics
  \begin{equation}
    \dot{\tilde z}(t)=\tilde w(t),\tilde z(0)=z_1(0)-z_2(0).
  \end{equation}
  and, hence, the system \eqref{eq:mainsystK2} is an implementation of the integration operator acting on the input $\tilde w:=w_1-w_2$.
\end{proposition}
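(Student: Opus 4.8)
The plan is simply to subtract the two differential equations defining the antithetic integrator. Writing $\dot{\tilde z}=\dot z_1-\dot z_2$ and substituting from \eqref{eq:mainsystK2}, the quadratic coupling terms $-\eta z_1(t)z_2(t)$ appear identically in both equations and therefore cancel, leaving $\dot{\tilde z}(t)=w_1(t)-w_2(t)=\tilde w(t)$. The initial condition $\tilde z(0)=z_1(0)-z_2(0)$ holds by the very definition of $\tilde z$. This is exactly the dynamics of a pure integrator driven by $\tilde w$, and integrating in time yields $\tilde z(t)=\tilde z(0)+\int_0^t\tilde w(s)\,ds$, which shows that \eqref{eq:mainsystK2} realizes the integration operator acting on $\tilde w$.

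There is essentially no analytical obstacle: the entire content of the statement is the structural observation that the nonlinear coupling enters both state equations with the same sign and magnitude, so that it is annihilated by taking the difference of the two states. If anything, the only point worth emphasizing is that, although $z_1$ and $z_2$ are individually governed by nonlinear dynamics and constrained to be nonnegative, their difference $\tilde z$ is sign-unrestricted and obeys a genuinely linear (indeed, purely integral) law — and it is precisely this feature that lets the antithetic motif emulate an ordinary integrator while keeping the underlying species $z_1,z_2$ in the nonnegative orthant.
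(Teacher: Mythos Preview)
Your proof is correct and is exactly the intended argument: the paper states this proposition without proof, treating the cancellation of the identical nonlinear terms $-\eta z_1 z_2$ upon subtraction as immediate. Your added remark about the sign-unrestricted linear difference versus the nonnegative nonlinear components is a nice clarification of why the construction is useful.
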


This results demonstrates that even though the two inputs are nonnegative, the integrator integrates their difference, which can be negative. An interpretation of the antithetic integrator is that $z_1$ and $z_2$ contain the positive and the negative part of $\tilde z$, respectively. However, this is not purely the positive and negative parts since they are usually both nonzero.

\subsection{Regularized integration}\label{sec:linpos2}

Before defining what is regularized integration, it is necessary to define the concept of regularizing function:
\begin{define}
  We  say that $\varphi:\mathbb{R}\to\mathbb{R}$ is a regularizing function if it is
  \begin{itemize}
    \item continuously differentiable a.e., and
    \item monotonically increasing, i.e. $\varphi^\prime>0$ a.e.
  \end{itemize}
  If, moreover, it is nonnegative, then it is called a positively regularizing function.
\end{define}
The idea behind the use of regularizing functions to be combined with a standard integrator to change its properties. The definition above imposes some strong properties which are useful in the current paper and when we have applications in biology in mind. However, it is possible to drop the differentiability or the strict monotonicity assumptions so that ON-OFF nonlinearities of the form $\max\{0,\cdot\}$ or saturation functions can be considered. In the current paper, we rely on the invertibility property of regularizing functions in order to obtain a dynamical model for the combination of the regularized integration operator. This leads to the following result:
\begin{proposition}
  The dynamics of the regularized integrator $\dot{I}=w,v:=\varphi(I)$, for some regularizing function $\varphi$ and input signal $w$, is given by
  \begin{equation}
      \dot{v}(t)=\varphi^\prime(\varphi^{-1}(v(t)))w(t),v(0)=v_0
  \end{equation}
\end{proposition}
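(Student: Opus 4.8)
The plan is to apply the chain rule to the composition $v = \varphi(I)$, where $I$ is the state of the standard integrator governed by $\dot I = w$. First I would observe that since $\varphi$ is (a.e.) continuously differentiable and monotonically increasing with $\varphi' > 0$ a.e., it is invertible on its image, so $I = \varphi^{-1}(v)$ is well defined; this is the only structural fact about regularizing functions that is needed. Differentiating $v(t) = \varphi(I(t))$ with respect to $t$ gives $\dot v(t) = \varphi'(I(t))\,\dot I(t) = \varphi'(I(t))\,w(t)$, and then substituting $I(t) = \varphi^{-1}(v(t))$ yields the claimed identity $\dot v(t) = \varphi'(\varphi^{-1}(v(t)))\,w(t)$. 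The initial condition follows immediately: $v(0) = \varphi(I(0)) = v_0$.

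The main thing to be careful about is the "a.e." qualifier in the definition of a regularizing function: $\varphi'$ need not exist everywhere, so the chain-rule step should be justified at points of differentiability, and one should note that since $\varphi$ is monotone it is differentiable a.e.\ (and absolutely continuous under the standing assumptions), so the identity holds for almost every $t$ in the Carath\'eodory sense. This is the only mild obstacle; away from the exceptional set the argument is the elementary chain rule. I would state the result as holding a.e.\ in $t$, consistent with the hypotheses, and remark that when $\varphi$ is everywhere $C^1$ (as in the exponential and logistic cases used later) the formula holds classically for all $t$.

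Finally, I would note the interpretation that makes the formula useful downstream: the effective integrator gain is now the state-dependent factor $\varphi'(\varphi^{-1}(v))$, which vanishes as $v$ approaches the boundary of the image interval of $\varphi$ — this is precisely the mechanism by which a positively regularizing function enforces $v \ge 0$ while still implementing integral action, and it is also why an absorbing state appears at the boundary. No deeper machinery is required; the proof is essentially a one-line chain-rule computation together with invertibility of $\varphi$.
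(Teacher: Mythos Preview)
Your proposal is correct and follows exactly the same approach as the paper: apply the chain rule to $v=\varphi(I)$ and use the invertibility of $\varphi$ (guaranteed by strict monotonicity) to write $I=\varphi^{-1}(v)$. The paper's proof is in fact just a one-sentence pointer to ``standard differentiation rules'' and the existence of $\varphi^{-1}$, so your version is, if anything, more thorough in its handling of the a.e.\ qualifier.
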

\begin{proof}
  The proof follows from the standard differentiation rules and the fact that  the inverse of $\varphi$ exists according to the definition of regularizing functions.
\end{proof}
Deriving the model for entire regularized integral controller is interesting for various reasons. First of all, it removes the regularizing function from the loop by embedding it in the model of the integrator. Another interesting fact is that this new expression for the regularized integrator emphasizes connections with existing models in biochemistry, notably in the glucose regulation system \cite{Karin:16} for a certain choice for the regularizing function. Such integrators are notably also called \emph{constrained integrators} in \cite{Xiao:18} but their correspondence with regularizing functions has never been pointed so far.

In Table \ref{tab:driving}, several correspondences between (positively) regularizing functions and the dynamical model for $v$ are provided. Interestingly, we can see that exponential-based regularizing functions lead to polynomial dynamical models for $v$, an essential property when such controllers need to be implemented in living organisms. The hyperbolic, arctangent and algebraic regularizing functions, even if not considered any further in this paper, may find applications in the control of nonlinear systems subject to input saturations. Indeed, substituting the static saturation function by a regularizing function could allow for a more systematic analysis of the closed-loop dynamics. Note also, that these regularizing functions can be made positively regularizing by simply adding a constant term to their expression.

\begin{table}[h]
\centering
\caption{Some (positively) regularizing  functions and the associated regularized integration models}\label{tab:driving}
  \begin{tabular}{|c|c|c|}
  \hline
        & $\varphi(x)$, $\alpha,\beta,\theta>0$ & $\dot{v}$ for $v=\varphi(I)$\\
        \hline
        \hline
        Exponential & $e^{\alpha x}$& $\alpha vw$\\
        Logistic & $\frac{\beta }{e^{-\alpha x}+1}$ & $\dfrac{\alpha}{\beta}v(\beta-v)w$\\
        Generalized Logistic & $\frac{\beta }{(e^{-\alpha x}+1)^\theta}$ & $\frac{\alpha\theta}{\beta^{1/\theta}}v(\beta^{1/\theta}-v)w$\\
        Hyperbolic & $\beta\tanh(\alpha x)$& $\alpha \left(1-\dfrac{v^2}{\beta^2}\right)w$\\
        Arctangent & $\beta\arctan(\alpha x)$ & $\dfrac{\alpha\beta}{1+\tan\left(v/\beta\right)^2}w$\\
        Algebraic & $\frac{\beta \alpha x}{(1+\alpha^2 x^2)^{1/2}}$& $\dfrac{\alpha}{\beta^2}(\beta^2-v^2)^{3/2}w$\\
        \hline
  \end{tabular}
\end{table}

\section{Antithetic integral control of linear internally positive systems}\label{sec:AIC_linear}

\blue{The objective of this section is to provide a thorough analysis of the antithetic integral controller and its application to the integral control of linear positive systems. First, linear positive systems are briefly recalled in Section \ref{subsec:linpos} with some of their properties. The control problem and its solution based on the use of an antithetic integral controller are defined in Section \ref{subsec:controlpb}. Local stability results are obtained in Section \ref{sec:linan1}, Section \ref{sec:linan2} and Section \ref{sec:linan3}. The perfect adaptation properties of the controller are discussed in Section \ref{subsec:PA} and the section concludes on an example inspired from biochemistry in Section \ref{subsec:ex}.}

\subsection{Linear positive systems}\label{subsec:linpos}

Let us consider the linear system
\begin{equation}\label{eq:mainsystL}
  \begin{array}{lcl}
    \dot{x}(t)&=&Ax(t)+Bu(t),\     x(0)=x_0\\
    y(t)&=&Cx(t)\\
  \end{array}
\end{equation}
where $x\in\mathbb{R}^n$ is the state of the system, $u\in\mathbb{R}$ is the control input and $y\in\mathbb{R}$ is the measured output that has to be controlled. The matrices $A\in\mathbb{R}^{n\times n}$, $B\in\mathbb{R}^{n\times 1}$ and $C\in\mathbb{R}^{1\times n}$ are the state matrix, the input matrix and the output matrix, respectively. Since we are interested in the control of linear internally positive systems, we need a way to identify this type of systems. The following result gives a complete characterization of linear internally positive systems \cite{Farina:00}:
\begin{proposition}\label{prop:posL}
The following statements are equivalent:
\begin{enumerate}[label=({\alph*})]
  \item The system \eqref{eq:mainsystL} is internally positive; i.e. for any $x_0,w(t)\ge0$ for all $t\ge0$, we have that $x(t),y(t)\ge0$ for all  $t\ge0$.
  \item The matrix $A$ is Metzler (i.e. all the off-diagonal entries are nonnegative), the matrices $B$ and $C$ are nonnegative (i.e. all the entries are nonnegative).
\end{enumerate}
\end{proposition}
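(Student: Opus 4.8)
The plan is to prove the equivalence through the variation-of-constants formula, the central intermediate fact being the classical characterization that the matrix exponential $e^{At}$ is entrywise nonnegative for all $t\ge0$ if and only if $A$ is Metzler. I would first isolate and prove this lemma. If $A$ is Metzler, pick $\lambda>0$ large enough that $A+\lambda I\ge0$ entrywise; then $e^{At}=e^{-\lambda t}e^{(A+\lambda I)t}$ and the series $e^{(A+\lambda I)t}=\sum_{k\ge0}t^k(A+\lambda I)^k/k!$ has only nonnegative terms, whence $e^{At}\ge0$. Conversely, if $e^{At}\ge0$ for all $t\ge0$, then for every pair $i\ne j$ we have $(e^{At})_{ij}\ge0$ while $(e^{A\cdot0})_{ij}=0$, so $A_{ij}=\left.\tfrac{d}{dt}(e^{At})_{ij}\right|_{t=0}=\lim_{t\downarrow0}(e^{At})_{ij}/t\ge0$, i.e. $A$ is Metzler.

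For the implication (b)$\Rightarrow$(a), I would write the solution of \eqref{eq:mainsystL} as $x(t)=e^{At}x_0+\int_0^t e^{A(t-s)}Bu(s)\,ds$. Since $A$ is Metzler, the lemma gives $e^{A\tau}\ge0$ for all $\tau\ge0$; hence $x_0\ge0$, $B\ge0$ and $u(s)\ge0$ for all $s\ge0$ force $x(t)\ge0$ for all $t\ge0$, and then $y(t)=Cx(t)\ge0$ because $C\ge0$.

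For (a)$\Rightarrow$(b) I would extract the sign constraints on $A$, $B$, $C$ by testing the internal positivity property on suitably chosen data. Taking $u\equiv0$ and $x_0=e_j$, the $j$-th canonical basis vector, yields $x(t)=e^{At}e_j\ge0$ for every $j$, hence $e^{At}\ge0$ for all $t\ge0$, so $A$ is Metzler by the lemma; evaluating $y(t)=Ce^{At}e_j\ge0$ at $t=0$ gives $C_{1j}\ge0$, so $C\ge0$. Taking $x_0=0$ and the constant (nonnegative) input $u\equiv1$ gives $x(t)\ge0$ with $x(0)=0$, so $B=\dot x(0)=\lim_{t\downarrow0}x(t)/t\ge0$.

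The only genuinely non-routine ingredient is the Metzler/exponential lemma; the rest is bookkeeping with the variation-of-constants formula together with a judicious choice of test initial conditions and inputs. I would also remark in passing that the statement of the proposition uses $w$ for the exogenous input whereas \eqref{eq:mainsystL} writes it as $u$; these denote the same signal.
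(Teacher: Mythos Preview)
Your proof is correct and follows the standard argument. Note, however, that the paper does not actually prove this proposition: it is stated as a known characterization and attributed to Farina and Rinaldi's monograph on positive linear systems, where essentially the same argument (the Metzler--exponential lemma via the shift $A+\lambda I\ge0$, combined with the variation-of-constants formula and suitable test initial conditions and inputs) appears. Your observation about the $w$/$u$ notational inconsistency in the statement is also accurate.
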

A consequence of this result is that the impulse response of an internally positive system is nonnegative. However, there may exist systems which are not internally positive having a nonnegative impulse response. Those systems are called externally positive systems and only require the output to be nonnegative for any nonnegative input and zero initial conditions \cite{Farina:00}.

We also make the following assumption:
\begin{assumption}\label{hyp:1}
  The matrix $A$ is Hurwitz stable and $CA^{-1}B\ne0$.
\end{assumption}
\blue{The assumption of asymptotic stability is motivated by the fact that the integral action usually has a destabilizing effect on the dynamics of the system. It is still possible to control systems for which this assumption is not met under additional conditions on the poles and zeros of the transfer function associated with the system. This will be treated in a different article.} The second assumption simply states that the DC-gain of the system is nonzero. One implication of this assumption is that of the output controllability of \eqref{eq:mainsystL}:
\begin{proposition}\label{prop:dsdsds}
Assume that the system \eqref{eq:mainsystL} is internally positive and that Assumption \ref{hyp:1} holds. Then, the system \eqref{eq:mainsystL} is output-controllable\footnote{The system \eqref{eq:mainsystL} is output-controllable  if  $\rank\begin{bmatrix}
    CB & CAB & \ldots & CA^{n-1}B
  \end{bmatrix}=1$; see e.g. \cite{Ogata:70}.} and we have that $CA^{-1}B<0$.
\end{proposition}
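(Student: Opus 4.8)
The plan is to handle the two assertions separately: first the output-controllability claim via a transfer-function argument that uses only Assumption \ref{hyp:1}, and then the sign of $CA^{-1}B$ via the Metzler/Hurwitz structure of $A$.

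First I would observe that $\begin{bmatrix} CB & CAB & \cdots & CA^{n-1}B\end{bmatrix}$ is a row vector in $\mathbb{R}^{1\times n}$, so its rank is at most $1$, and it equals $1$ exactly when at least one of the Markov parameters $CA^kB$, $k=0,\dots,n-1$, is nonzero. Suppose, for contradiction, that they all vanish. By the Cayley--Hamilton theorem, $A^n$ is a linear combination of $I,A,\dots,A^{n-1}$, hence $CA^kB=0$ for every $k\ge0$. Expanding $C(sI-A)^{-1}B=\sum_{k\ge0}CA^kB\,s^{-(k+1)}$ for $|s|$ large then forces the transfer function $G(s):=C(sI-A)^{-1}B$ to vanish identically as a rational function. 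Evaluating at $s=0$, which is legitimate since $A$ is Hurwitz and hence $0$ is not a pole, gives $-CA^{-1}B=G(0)=0$, contradicting $CA^{-1}B\ne0$. Therefore the matrix has rank $1$ and the system is output-controllable.

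It remains to establish $CA^{-1}B<0$. Here I would invoke the standard characterization of Metzler Hurwitz matrices: if $A$ is Metzler and Hurwitz stable, then $-A$ is a nonsingular M-matrix and, consequently, $(-A)^{-1}$ is entrywise nonnegative, i.e. $A^{-1}\le0$ entrywise (equivalently, $-A^{-1}=\int_0^\infty e^{At}\,dt\ge0$). Since Proposition \ref{prop:posL} ensures $B\ge0$ and $C\ge0$, the scalar $CA^{-1}B=\sum_{i,j}C_i\,(A^{-1})_{ij}\,B_j$ is a sum of nonpositive terms, so $CA^{-1}B\le0$; combining with $CA^{-1}B\ne0$ from Assumption \ref{hyp:1} yields $CA^{-1}B<0$. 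Alternatively, one may argue directly with the impulse response: internal positivity gives $Ce^{At}B\ge0$ for all $t\ge0$, and $\int_0^\infty Ce^{At}B\,dt=-CA^{-1}B$; the integrand is continuous, nonnegative, and (by the first part) not identically zero, so the integral is strictly positive.

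The argument is entirely elementary. The only points requiring care are the Cayley--Hamilton step, which is what guarantees that the vanishing of the first $n$ Markov parameters propagates to all of them, and the appeal to the M-matrix property of $-A$ (equivalently, the nonnegativity of $\int_0^\infty e^{At}\,dt$), which is the one place where the Metzler structure of $A$ and the nonnegativity of $B$ and $C$ are genuinely used.
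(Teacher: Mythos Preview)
Your argument is correct. For the sign of $CA^{-1}B$ you do exactly what the paper does: invoke the fact that a Metzler Hurwitz matrix has a nonpositive inverse, combine with $B,C\ge0$, and use $CA^{-1}B\ne0$ to turn $\le0$ into $<0$.

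Where you differ is in the output-controllability part. The paper argues via a positive-systems-specific fact: for an internally positive stable system the $H_\infty$ norm of the transfer function equals $|G(0)|$, so $|CA^{-1}B|\ne0$ forces $G\not\equiv0$, hence the impulse response is not identically zero, hence the system is output-controllable. You instead give a general linear-systems argument that bypasses internal positivity entirely: if the first $n$ Markov parameters vanished, Cayley--Hamilton would kill them all, the Neumann expansion would make $G$ vanish as a rational function, and evaluation at $s=0$ would contradict $CA^{-1}B\ne0$. Your route is more elementary and uses only Assumption \ref{hyp:1}; the paper's route is shorter once one is willing to cite the $\|G\|_{H_\infty}=|G(0)|$ property for positive systems, and it ties the result more explicitly to the positive-systems structure that is the theme of the section.
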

\begin{proof}
For the system \eqref{eq:mainsystL} to be output controllable, it suffices that the impulse response be non-identically 0. This is equivalent to saying that its transfer function $G$ be non-identically zero as well. Since the system is internally positive and $A$ is Hurwitz stable, then we have that $\textstyle ||G||_{H_\infty}=\sup_{w\in\mathbb{R}}|G(j\omega)|=|G(0)|=|CA^{-1}B|\ne0$, by assumption. \blue{The fact that the frequency response has maximum modulus at the zero frequency comes from the fact that the system is internally positive and stable; see e.g. \cite{Briat:11h}.} Hence, the system is output-controllable. Finally, since the matrix $A$ is Metzler and Hurwitz stable then its inverse is \blue{nonpositive} \cite{Berman:94}. Hence, $CA^{-1}B<0$, since $CA^{-1}B\ne0$. The proof is completed.
\end{proof}

\subsection{The control problem and the antithetic integral controller}\label{subsec:controlpb}

We consider in this section the following control problem:
\begin{problem}\label{prob1}
  Let the reference $\mu>0$ be given and assume that the system \eqref{eq:mainsystL} is internally positive and satisfies the assumptions in Assumption \ref{hyp:1}. Find an integral controller such that
  \begin{enumerate}[label=({\alph*})]
    \item the control input $u$ is nonnegative at all times;
    \item the equilibrium point of interest of the closed-loop system consisting of the system \eqref{eq:mainsystL} and the controller is (locally) asymptotically stable;
    \item the output $y$ asymptotically tracks the reference $\mu>0$; i.e. $y(t)\to\mu$ as $t\to\infty$;
    \item the closed-loop system locally rejects constant disturbances acting on the input and on the state of the system.
    \end{enumerate}
\end{problem}
This problem and its variations have been addressed in the past using various approaches. A thoroughly considered approach relies on the use of an ON-OFF nonlinearity placed between the controller and the system; \cite{Briat:12c,Briat:13h,Guiver:15,Briat:19:Opto}. The main issue is that an ON-OFF integrator cannot be readily implemented in terms of chemical reactions and, hence, cannot be implemented inside biological organisms. Another drawback of ON-OFF nonlinearities is the difficulty of considering them for the control of nonlinear systems. On the other hand, the antithetic integral controller defined as
\begin{equation}\label{eq:mainsystK2b}
  \begin{array}{lcl}
    \dot{z}_1(t)&=&\mu-k\eta z_1(t)z_2(t)\\
    \dot{z}_2(t)&=&y(t)-k\eta z_1(t)z_2(t)\\
    u(t)&=&kz_1(t)\\
    z(0)&=&z_0\ge0,
  \end{array}
\end{equation}
where $k>0$ is the gain of the controller and $\eta>0$ is the coupling parameter, can be theoretically implemented in terms of chemical reactions. This controller was first proposed in \cite{Briat:15e} for the integral control of biological networks using \emph{in vivo} controllers. Using the implementation ideas proposed in \cite{Briat:15e}, this controller has been implemented \emph{in vivo} and convincing experimental results were obtained and reported in \cite{Aoki:19}. Even though we are not considering such implementation constraints here, the above controller has the benefit of being polynomial. In this regard, this controller may be interesting to consider when controlling polynomial or rational systems as it would allow for the use of modern optimization methods such as sum of squares programming \cite{Parrilo:00}. Note that the ON-OFF nonlinearity can be considered in the linear setting using Popov's criterion but the nonlinear setting is way more involved and there is no clear way on how to do so.  \blue{A last remark is that the model is slightly different than the one \cite{Briat:15e} where the term $\eta$ is considered instead of $k\eta$ for the polynomial term in the model. The underlying reason is purely technical and allows one to simplify the derivation of the results. The simple change of variables $\eta\leftarrow k\eta$ allows to retrieve the controller considered in \cite{Briat:15e} and to adapt the results of this paper to the controller in \cite{Briat:15e}.}

The following result shows that the closed-loop system has only one equilibrium imposed by the antithetic integral controller:
\begin{proposition}
  Let $k,\eta,\mu$ be given. Then, the equilibrium point of the closed-loop system \eqref{eq:mainsystL}-\eqref{eq:mainsystK2b} is unique and is given by
\begin{equation}\label{eq:point}
    x^*=\mu\dfrac{A^{-1}B}{CA^{-1}B},\ z_1^*=\dfrac{-\mu}{CA^{-1}Bk}\ \textnormal{and}\ z_2^*=\dfrac{-CA^{-1}B}{\eta}.
\end{equation}
\end{proposition}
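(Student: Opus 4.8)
The plan is to find the equilibrium by setting all derivatives to zero in the closed-loop system \eqref{eq:mainsystL}-\eqref{eq:mainsystK2b} and solving the resulting algebraic system. First I would write the equilibrium conditions: $0 = Ax^* + Bu^*$, $0 = Cx^* - \mu$ (this is the crucial consequence — the integral action forces $y^* = \mu$), $0 = \mu - k\eta z_1^* z_2^*$, $0 = y^* - k\eta z_1^* z_2^*$, together with $u^* = kz_1^*$. From the first relation and the invertibility of $A$ (guaranteed since $A$ is Hurwitz by Assumption \ref{hyp:1}), I get $x^* = -A^{-1}Bu^*$, hence $y^* = Cx^* = -CA^{-1}Bu^*$. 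Setting $y^* = \mu$ and using $CA^{-1}B \neq 0$ yields $u^* = -\mu/(CA^{-1}B)$, and therefore $x^* = \mu A^{-1}B/(CA^{-1}B)$ and $z_1^* = u^*/k = -\mu/(CA^{-1}Bk)$.

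Next I would extract $z_2^*$. Subtracting the two controller equations at equilibrium gives $\mu - y^* = 0$, which is consistent (and re-derives $y^* = \mu$) but does not pin down $z_2^*$; instead I use the first controller equation $\mu = k\eta z_1^* z_2^*$ directly, solving for $z_2^* = \mu/(k\eta z_1^*) = \mu/(k\eta \cdot (-\mu/(CA^{-1}Bk))) = -CA^{-1}B/\eta$. This matches the claimed expression. For uniqueness, I would note that each step above was an equivalence: the linear block determines $x^*$ and $u^*$ uniquely once $y^* = \mu$ is imposed, the choice $y^* = \mu$ is forced by the difference of the controller equations, $z_1^*$ is then determined by $u^* = kz_1^*$, and $z_2^*$ is then determined by the product relation (note $z_1^* \neq 0$ since $\mu, k > 0$ and $CA^{-1}B \neq 0$, so division is legitimate).

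The only mild subtlety — and the closest thing to an obstacle — is making sure the equilibrium is consistent and well-defined, i.e. that $z_1^* \neq 0$ so that $z_2^*$ is recoverable from $k\eta z_1^* z_2^* = \mu$, and that the signs are compatible with nonnegativity of the state; here Proposition \ref{prop:dsdsds} gives $CA^{-1}B < 0$, so $z_1^* > 0$ and $z_2^* > 0$, confirming the equilibrium lies in $\mathbb{R}_{\ge 0}^2$ as required. Everything else is a routine linear-algebra computation, so I expect the proof to be short.
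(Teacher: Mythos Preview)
Your proof is correct. The paper actually states this proposition without proof, so there is no argument to compare against; your derivation is the natural one and all steps (invertibility of $A$ from Hurwitz stability, forcing $y^*=\mu$ by subtracting the two controller equilibrium equations, then back-substituting to recover $x^*$, $z_1^*$, and finally $z_2^*$ from the product relation) are sound, as is the uniqueness argument and the sign check via Proposition~\ref{prop:dsdsds}.
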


The equilibrium point of the closed-loop system for some given $\mu,\eta,k>0$ is denoted by
\begin{equation}
  \mathcal{X}_{\mu,\eta,k}^*:=\left\{(x^*,z^*)\ \textnormal{defined\ in\ }\eqref{eq:point}\right\}.
\end{equation}

%

\subsection{Controller existence - a non-constructive result}\label{sec:linan1}

The following result proves the existence of an antithetic integral controller that solves the Problem \ref{prob1}:
\begin{theorem}[Local stabilization]\label{th:glabglab}
Assume that the system \eqref{eq:mainsystL} is internally positive and that it satisfies Assumption \ref{hyp:1}. Then, the following statements hold for the closed-loop system  \eqref{eq:mainsystL}-\eqref{eq:mainsystK2b}:
  \begin{enumerate}[label=({\alph*})]
    \item for any given $\mu,\eta>0$, there exists a $\bar{k}=\bar{k}(\mu,\eta)>0$ such that for any $k\in(0,\bar{k})$, the equilibrium point $\mathcal{X}_{\mu,\eta,k}^*$ is locally asymptotically stable;
    \item there exists a $\bar{k}>0$ such that, for any $k\in(0,\bar{k})$, all the equilibrium points in $\mathcal{X}_{\mu,\eta,k}^*$ for all $\eta,\mu>0$ are locally asymptotically stable.
  \end{enumerate}
\end{theorem}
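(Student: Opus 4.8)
The plan is to apply Lyapunov's indirect method: linearize the closed-loop system \eqref{eq:mainsystL}-\eqref{eq:mainsystK2b} at the (unique) equilibrium $\mathcal{X}_{\mu,\eta,k}^*$ and show that the Jacobian is Hurwitz for all sufficiently small $k$. Writing $\gamma:=-CA^{-1}B>0$ (positivity by Proposition \ref{prop:dsdsds}) and using the coordinates $(\delta x,\delta z_1,\delta z_1-\delta z_2)$, the Jacobian takes the form
\begin{equation*}
  \tilde J(k,a)=\begin{bmatrix} A & kB & 0\\ 0 & -(k\gamma+a) & a\\ -C & 0 & 0\end{bmatrix},\qquad a:=\dfrac{\mu\eta}{\gamma}>0 ,
\end{equation*}
so that --- and this is the crucial observation for part (b) --- the linearized closed loop depends on $\mu,\eta$ only through the single scalar $a$, which ranges over all of $(0,\infty)$ as $\mu,\eta$ do. A Schur-complement computation gives the characteristic polynomial
\begin{equation*}
  p(s,k,a)=\det(sI-\tilde J(k,a))=sD(s)(s+k\gamma+a)+akN(s),
\end{equation*}
where $D(s):=\det(sI-A)$, $N(s):=C\operatorname{adj}(sI-A)B$, $G(s)=C(sI-A)^{-1}B=N(s)/D(s)$, and $N(0)=\gamma D(0)$.

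For part (a) I would fix $a>0$ and run a root-locus/continuity argument in $k$. At $k=0$ one has $p(s,0,a)=sD(s)(s+a)$, whose roots are the $n$ roots of $D$ (in the open left half-plane, $A$ being Hurwitz), the root $-a<0$, and a simple root at the origin. Implicit differentiation of $p(s(k),k,a)=0$ at $(s,k)=(0,0)$ gives $s'(0)=-\gamma<0$ (using $N(0)=\gamma D(0)$), so the root emanating from the origin enters the open left half-plane, while by continuity the other $n+1$ roots remain there for $k$ small; hence there is $\bar k(\mu,\eta)>0$ with $\tilde J(k,a)$ Hurwitz on $(0,\bar k(\mu,\eta))$.

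For part (b) the task is to obtain $\bar k$ \emph{independent of $a$}, and I would do so in three steps. First, since the plant is internally positive and Hurwitz, $\lVert G\rVert_{H_\infty}=|G(0)|=\gamma$ (as in the proof of Proposition \ref{prop:dsdsds}), and since $G$ is analytic at $0$ with $G(0)$ real, $M:=\sup_{\omega\neq0}|\operatorname{Im}G(j\omega)/\omega|$ is finite. Now $p(j\omega,k,a)/D(j\omega)=-\omega^2+j\omega(k\gamma+a)+akG(j\omega)$; its vanishing forces $\omega\neq0$ (the value at $\omega=0$ is $ak\gamma\neq0$) and then, from the imaginary part, $k\gamma+a=-ak\,\operatorname{Im}G(j\omega)/\omega\le akM$, which is impossible as soon as $k\le 1/M$. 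So for $k\in(0,1/M)$, $p(\cdot,k,a)$ has no imaginary-axis root, \emph{for every} $a>0$. Second, the strong-coupling limit: $p(s,k,a)/a\to sD(s)+kN(s)$ as $a\to\infty$, the latter being the characteristic polynomial of the standard integral loop $\begin{bmatrix}A & kB\\ -C & 0\end{bmatrix}$, which --- again by a root-locus argument exploiting $G(0)=\gamma>0$ --- is Hurwitz for $k\in(0,\bar k_0)$ for some $\bar k_0>0$; hence $n+1$ roots of $p(\cdot,k,a)$ converge to the left-half-plane roots of $sD(s)+kN(s)$ while the last root behaves like $-(a+k\gamma)\to-\infty$, so $p(\cdot,k,a)$ is Hurwitz for all $a$ large. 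Third, fix $k\in(0,\min\{1/M,\bar k_0\})$: the number of open-right-half-plane roots of $p(\cdot,k,a)$ is continuous in $a$ and can change only through an imaginary-axis crossing, which never occurs on $(0,\infty)$; being $0$ for large $a$, it is $0$ for all $a>0$. Thus $\tilde J(k,a)$ is Hurwitz for every $a>0$, i.e. for all $\mu,\eta>0$, so $\bar k:=\min\{1/M,\bar k_0\}$ works.

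The main obstacle is exactly the uniformity in $(\mu,\eta)$ demanded by part (b): for a fixed pair the statement is the textbook small-gain behavior of integral control, but making $\bar k$ blind to $(\mu,\eta)$ hinges on (i) the reduction of the linearized closed loop to the one-parameter family parametrized by $a=\mu\eta/\gamma$, and (ii) the positivity of the plant, which enters only through the bound $|G(j\omega)|\le G(0)$ used to carve out a gain interval free of imaginary-axis roots \emph{uniformly in the coupling parameter} $a$. The continuation from the strong-coupling limit $a\to\infty$ then transports Hurwitzness to every finite $a$, in line with the paper's theme that the strong-binding regime is the most restrictive one. (Note that only $\gamma>0$ and the $H_\infty$ bound are used, not the Metzler structure of $A$, which is why the argument extends to externally positive and to hyperbolic-equilibrium nonlinear systems.)
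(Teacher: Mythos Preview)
Your proof is correct. For part (a) your argument is essentially the paper's: both linearize at the equilibrium, observe that the Jacobian at $k=0$ has a simple eigenvalue at the origin (the remaining ones being the spectrum of $A$ together with $-\eta\mu/\gamma$), and compute that this eigenvalue moves in the direction $CA^{-1}B=-\gamma<0$ as $k$ increases from $0$. Your implicit-differentiation computation of $s'(0)=-\gamma$ is exactly the paper's first-order eigenvalue perturbation $\lambda_1=v_\ell\tilde A_1v_r=CA^{-1}B$, written in root-locus language.

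For part (b) you take a genuinely different route. The paper's proof of (b) is a one-line observation: since the first-order direction $\lambda_1=CA^{-1}B$ does not depend on $(\mu,\eta)$, a uniform $\bar k$ exists. This is brief to the point of being incomplete, because the higher-order terms in the eigenvalue expansion \emph{do} depend on $(\mu,\eta)$, and as $\eta\mu\to0$ the stable eigenvalue $-\eta\mu/\gamma$ of $\tilde A_0$ collides with the zero eigenvalue, so the simple-eigenvalue perturbation bound is not obviously uniform. You instead (i) reduce the $(\mu,\eta)$ dependence to the single parameter $a=\mu\eta/\gamma$, (ii) rule out imaginary-axis roots for all $a>0$ whenever $k\le 1/M$ by a direct estimate on $\operatorname{Im}G(j\omega)/\omega$, and (iii) homotope in $a$ from the strong-coupling limit $a\to\infty$, where the closed loop reduces to the standard integral loop $\begin{bmatrix}A & kB\\ -C & 0\end{bmatrix}$. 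This is more rigorous and in fact anticipates the paper's later Theorem~\ref{eq:mainconstructive} (and Lemmas~\ref{lem:etasmall}--\ref{lem:etalarge}), whose root-locus argument is precisely your step (iii) carried out optimally. What the paper's approach buys is brevity; what yours buys is a complete uniformity argument and, as a byproduct, an explicit (if suboptimal) lower bound $\bar k\ge\min\{1/M,\bar k_0\}$. One minor remark: your concluding paragraph credits internal positivity (via $|G(j\omega)|\le G(0)$) for the finiteness of $M$, but in fact your crossing argument uses only that $G$ is strictly proper, stable, and real at $0$; positivity is needed only to guarantee $\gamma>0$, in line with your parenthetical comment about externally positive systems.
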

\begin{proof}
\textbf{Proof of statement (a).} Let $\mu,\eta>0$ be given. The proof is based on a perturbation argument on the closed-loop system. The Jacobian linearization of the system about the equilibrium point \eqref{eq:point} is given by
  \begin{equation}\label{eq:linear}
    \begin{bmatrix}
      \dot{\tilde{x}}(t)\\
      \dot{\tilde{z}}_1(t)\\
      \dot{\tilde{z}}_2(t)
    \end{bmatrix}=\underbrace{\begin{bmatrix}
      A & Bk & 0\\
      0 & CA^{-1}Bk & \dfrac{\eta\mu}{CA^{-1}B}\\
      C & CA^{-1}Bk & \dfrac{\eta\mu}{CA^{-1}B}
    \end{bmatrix}}_{\mbox{$\tilde{A}(k)$}} \begin{bmatrix}
      \tilde{x}(t)\\
      \tilde{z}_1(t)\\
      \tilde{z}_2(t)
    \end{bmatrix}
  \end{equation}
  where $\tilde{x}=x-x^*,\tilde{z}_1=z_1-z_1^*$ and $\tilde{z}_2=z_2-z_2^*$. The above matrix $\tilde{A}(k)$ can be decomposed as $\tilde{A}_0+\tilde{A}_1k$ where
  \begin{equation}
    \tilde{A}_0=\begin{bmatrix}
      A & 0 & 0\\
      0 & 0 &\dfrac{\eta\mu}{CA^{-1}B}\\
      C & 0 & \dfrac{\eta\mu}{CA^{-1}B}
    \end{bmatrix}\quad \textnormal{and}\quad \tilde{A}_1=\begin{bmatrix}
      0 & B & 0\\
      0 & CA^{-1}B & 0\\
      0 & CA^{-1}B & 0
    \end{bmatrix}.
  \end{equation}
   \blue{The affine dependency on $k$ is a consequence of the choice for the coefficient of the second-order term in \eqref{eq:mainsystK2b}. Indeed, if $\eta$ would have been considered instead of $k\eta$, we would have obtained a dependence on $k$ and $1/k$, which would have complicated the analysis when $k$ is close to zero.}
  It is immediate to see that the spectrum of $\tilde{A}_0$, $\sigma(\tilde{A}_0)$, is simply given by
  \begin{equation}
    \sigma(\tilde{A}_0)=\sigma(A)\cup\left\{0,\dfrac{\eta\mu}{CA^{-1}B}\right\}.
  \end{equation}
  Since $CA^{-1}B<0$, then $\tilde{A}_0$ is marginally stable with a simple eigenvalue located at 0. The key idea is to show, via a perturbation argument, that the matrix $\tilde{A}_0+\eps \tilde{A}_1$ can be made Hurwitz for some sufficiently small $\eps>0$. In other words, the 0-eigenvalue of $A_0$ must shift to the open left half-plane whenever $\tilde{A}_0$ is slightly perturbed in the direction $\tilde{A}_1$. Note that the perturbation result is only an analysis tool here. Indeed, the matrix $\tilde{A}_0$ is not the matrix of the system when $k=0$ since the equilibrium point \eqref{eq:point} is not defined for $k=0$. Below, we only consider the sum $\tilde{A}_0+k\tilde{A}_1$ to be a matrix to analyze, and our main tool for doing so is perturbation theory.

  Let us then study the bifurcation of the simple eigenvalue $\lambda_0=0$ of $\tilde{A}_0$ under the perturbation $\tilde{A}_0+\eps \tilde{A}_1$. It is known that when an eigenvalue is simple, it is locally differentiable and admits the Taylor expansion \cite{Seyranian:03}
  \begin{equation}
    \lambda(\eps)=\lambda_0+\eps\lambda_1+o(\eps)
  \end{equation}
  where $\lambda_1=v_\ell \tilde{A}_1v_r$ where $v_\ell$ and $v_r$ are the left and right normalized eigenvectors (i.e. $v_\ell v_r=1$) associated with the eigenvalue $\lambda_0=0$ of the matrix $\tilde{A}_0$. They are given by
  \begin{equation}
    v_\ell=\begin{bmatrix}
      CA^{-1} & 1 & -1
    \end{bmatrix}\ \textnormal{and}\ v_r=\begin{bmatrix}
      0&   1&     0
    \end{bmatrix}^T.
  \end{equation}
  Using the above vectors, we get that $\lambda_1=CA^{-1}B$, which is negative by Proposition \ref{prop:dsdsds}. Therefore, a small positive $\eps$ will shift the eigenvalue $\lambda_0=0$ to the open left-half plane. As a result, using the continuity property of eigenvalues, there will exist a (sufficiently small) $\bar{k}=\bar{k}(\mu,\eta)$ such that for all $k\in(0,\bar{k})$, the equilibrium point $\mathcal{X}_{\mu,\eta,k}^*$ is locally exponentially stable. This proves statement (a).\\

  \noindent \textbf{Proof of statement (b).} Using the fact that the perturbation direction $\lambda_1=CA^{-1}B<0$ is independent of $\eta$ and $\mu$, this means that it is possible to find a $\bar{k}$ independent of $\eta$ and $\mu$ such that for all $k\in(0,\bar{k})$, all the equilibrium points $\mathcal{X}^*_{\mu,\eta,k}$ for all $\eta,\mu>0$ are locally exponentially stable. This proves statement (b).
\end{proof}

\begin{example}\label{ex:example1}
  Let us illustrate this result through a simple example. We consider here the system \eqref{eq:mainsystL} with the matrices
  \begin{equation}
    A=\begin{bmatrix}
      -1 & \blue{0}\\
      \blue{1} & 1
    \end{bmatrix},B=\begin{bmatrix}
      1\\0
    \end{bmatrix},C=\begin{bmatrix}
      0\\ 1
    \end{bmatrix}^T.
  \end{equation}
  The matrix $\tilde{A}(k)$ of the linearized system around the equilibrium point $(\mu,\mu,-\mu/k,1/\eta)$ is given by
  \begin{equation}
    \tilde{A}(k)=\begin{bmatrix}
      -1 & \blue{0} & k & 0\\
      1 & -1 & 0 & 0\\
      0 & 0 & -k & -\eta\mu\\
      0 & 1 & -k & -\eta\mu
    \end{bmatrix}.
  \end{equation}
  The characteristic polynomial $\chi(\lambda)$ associated with this matrix is given by
  \begin{equation}
    \chi(\lambda) := \lambda^4+(2+k+\eta \mu)\lambda^3+(2k+1+2\eta\mu)\lambda^2+(k+\eta\mu)\lambda+k\eta\mu.
  \end{equation}
  The Routh-Hurwitz stability criterion gives the following conditions for the stability of the matrix $\tilde{A}(k)$
  \begin{equation}
    2+k+\eta\mu>0, k\eta\mu>0, \dfrac{2(k+\eta\mu+1)^2}{2+k+\eta\mu}>0\ \textnormal{and } k\eta\mu(2+k+\eta\mu)-\dfrac{2(k+\eta\mu)(k+\eta\mu+1)^2}{2+k+\eta\mu}.
  \end{equation}
  The first three conditions are trivially satisfied. Expanding the last one gives the following condition
  \begin{equation}
    \mathcal{C}(k,\eta\mu):=\eta^3\mu^3(k-2)+2\eta^2\mu^2(k^2-k-2)+\eta\mu(k^3-2k^2-6k-2)-2k(1+k)^2<0.
  \end{equation}
  Noting that $\mathcal{C}(0,\eta\mu)=-2\eta^3\mu^3-\eta^2\mu^2-2\eta\mu<0$ (but the matrix is not Hurwitz stable because $k\eta\mu=0$) and using the fact that $\mathcal{C}(k,\eta\mu)$ is continuous in $k$, we immediately get the existence of a $\bar{k}(\mu,\eta)>0$, such that $\mathcal{C}(k,\eta\mu)$ and all the other conditions are negative for all $k\in(0,\bar{k}(\mu,\eta))$. For instance, if we pick $\eta=1$ and $\mu=10$, we get that
  \begin{equation}
    \mathcal{C}(k,10)=8k^3+176k^2+738k-2420<0.
  \end{equation}
  Interpreting now the above expression as a polynomial in $k$, we can see that there is one sign change in the coefficients. By virtue of the Descartes' rule of signs, one can conclude that there is one and only one positive root to this polynomial, which we denote by $\bar{k}(10,1)$. Since, the left-hand side of the condition is negative for $k=0$, then we get that the inequality holds for all $k\in(0,\bar{k}(10,1))$. Numerical calculations show that $\bar{k}(10,1)$ is approximately 2.11. This illustrates the first statement of Theorem \ref{th:glabglab}.

  We now illustrate the second statement of Theorem \ref{th:glabglab}. We then view the condition  $\mathcal{C}(k,\eta\mu)<0$ as a polynomial in $\eta\mu$. Obviously, if $k>0$ is chosen such that all the coefficients of the polynomial are negative then we have that $\mathcal{C}(k,\eta\mu)<0$ for all $\eta\mu>0$. In fact, by virtue of Descartes' rule of signs, this condition is a necessary and sufficient condition for the negativity of the polynomial for all $\eta\mu>0$. We obtain the following conditions on $k$:
  \begin{equation}
    k<2, k^2-k-2<0\textnormal{ and }k^3-2k^2-6k-2<0.
  \end{equation}
  For $k=0$, the above conditions are satisfied. From Descartes' rule of sign, we also know that each polynomial has one and only one positive solution. Interestingly, $k=2$ is the positive root of the second polynomial whereas the third polynomial evaluated at $k=2$ gives -16. Hence, its positive root is larger than 2. As a conclusion, if we pick $k\in(0,2)$, then $\mathcal{C}(k,\eta\mu)<0$ holds for all $\eta\mu>0$. This illustrates the second statement.
\end{example}

\subsection{A constructive result}\label{sec:linan2}

Theorem \ref{th:glabglab} is non-constructive in nature. The calculations in the example show that one can compute important parameters for the closed-loop system. The downside is that the calculations will not scale very well as the dimension of the system increases. In this regard, it would be interesting to derive constructive results. The first step towards this goal is to show that for any $\mu,k>0$, we can find a small enough $\eta>0$ such that the equilibrium point of the closed-loop system is locally exponentially stable. This is formally stated below:
\begin{lemma}\label{lem:etasmall}
  Assume that the system \eqref{eq:mainsystL} is internally positive and that it satisfies Assumption \ref{hyp:1}. Assume further that $k,\mu>0$ are given. Then, there exists $\bar{\eta}=\bar{\eta}(\mu)>0$ such that for any $\eta\in(0,\bar{\eta})$, the equilibrium point $\mathcal{X}^*_{\mu,\eta,k}$ is locally asymptotically stable.
\end{lemma}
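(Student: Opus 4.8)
The plan is to run the same perturbation argument that underlies Theorem~\ref{th:glabglab}, but now with the coupling parameter $\eta$ (rather than the gain $k$) playing the role of the small parameter. Fixing $k,\mu>0$, I would first rewrite the Jacobian $\tilde A$ of \eqref{eq:linear} as an affine function of $\eta$, namely $\tilde A(\eta)=\tilde B_0+\eta\,\tilde B_1$ with
\[
\tilde B_0=\begin{bmatrix} A & Bk & 0 \\ 0 & CA^{-1}Bk & 0 \\ C & CA^{-1}Bk & 0 \end{bmatrix},
\qquad
\tilde B_1=\begin{bmatrix} 0 & 0 & 0 \\ 0 & 0 & \frac{\mu}{CA^{-1}B} \\ 0 & 0 & \frac{\mu}{CA^{-1}B} \end{bmatrix}.
\]
Exactly as in the proof of Theorem~\ref{th:glabglab}, I would point out that $\tilde B_0=\tilde A(0)$ is \emph{not} the linearization at a genuine equilibrium, since $z_2^*=-CA^{-1}B/\eta\to\infty$ as $\eta\to0$ by \eqref{eq:point}; it serves purely as an analysis object, and only the matrix $\tilde B_0+\eta\tilde B_1$ is of interest.

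The second step is to locate $\sigma(\tilde B_0)$. Since the last column of $\tilde B_0$ vanishes, expanding $\det(\lambda I-\tilde B_0)$ along that column and using the resulting block-triangular minor gives $\det(\lambda I-\tilde B_0)=\lambda\,\det(\lambda I-A)\,(\lambda-CA^{-1}Bk)$, so $\sigma(\tilde B_0)=\sigma(A)\cup\{CA^{-1}Bk\}\cup\{0\}$. By Assumption~\ref{hyp:1} the matrix $A$ is Hurwitz (in particular $0\notin\sigma(A)$), and by Proposition~\ref{prop:dsdsds} we have $CA^{-1}B<0$, hence $CA^{-1}Bk<0$ for $k>0$. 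Therefore $\tilde B_0$ is marginally stable with a \emph{simple} eigenvalue $\lambda_0=0$, all other eigenvalues lying in the open left half-plane.

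The third step is the first-order perturbation of this simple zero eigenvalue. Invoking the standard expansion $\lambda(\eta)=\eta\lambda_1+o(\eta)$ with $\lambda_1=v_\ell\tilde B_1v_r$, where $v_\ell,v_r$ are the left/right eigenvectors of $\tilde B_0$ for $\lambda_0=0$ normalized by $v_\ell v_r=1$, the vanishing last column immediately gives $v_r=\begin{bmatrix}0&0&1\end{bmatrix}^T$, and solving $v_\ell\tilde B_0=0$ yields $v_\ell=\begin{bmatrix}-CA^{-1}&0&1\end{bmatrix}$ (the block $-CA^{-1}$ comes from the first $n$ columns, and the middle entry is forced to $0$ by the $(n{+}1)$-st column together with $CA^{-1}B\ne0$). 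Then $\tilde B_1v_r$ has only its last two entries nonzero and equal to $\mu/(CA^{-1}B)$, whence $\lambda_1=\mu/(CA^{-1}B)$, which is \emph{strictly negative} since $\mu>0$ and $CA^{-1}B<0$. Hence a small positive $\eta$ pushes the zero eigenvalue into the open left half-plane, and by continuity of the spectrum there is $\bar\eta>0$ such that $\tilde A(\eta)$ is Hurwitz, and therefore $\mathcal X^*_{\mu,\eta,k}$ is locally (exponentially) asymptotically stable, for every $\eta\in(0,\bar\eta)$.

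I do not anticipate a genuine obstacle here, as the argument mirrors that of Theorem~\ref{th:glabglab}; the only points requiring care are the bookkeeping of which controller coordinate carries the critical eigenvector (it is the $z_2$-direction here, in contrast with the $z_1$-direction in Theorem~\ref{th:glabglab}) and the verification that $\lambda_0=0$ is simple, which hinges on $CA^{-1}Bk\ne0$. The one remark worth making concerns the dependence of the threshold: the perturbation coefficient $\lambda_1=\mu/(CA^{-1}B)$ does not involve $k$, so the direction of the bifurcation is uniform in $k$, which is consistent with writing $\bar\eta=\bar\eta(\mu)$; if one insists on $\bar\eta$ depending on $\mu$ alone one should additionally check that the eigenvalue branches stay uniformly off the imaginary axis for $k$ in compact sets, and otherwise it is cleanest simply to record the threshold as $\bar\eta(\mu,k)$.
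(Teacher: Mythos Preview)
Your proposal is correct and follows essentially the same route as the paper: the same affine decomposition $\tilde B_0+\eta\tilde B_1$ of the Jacobian \eqref{eq:linear}, the same identification $\sigma(\tilde B_0)=\sigma(A)\cup\{CA^{-1}Bk,0\}$, the same left/right eigenvectors $[-CA^{-1}\ 0\ 1]$ and $[0\ 0\ 1]^T$, and the same first-order coefficient $\lambda_1=\mu/(CA^{-1}B)<0$. Your additional remarks on the simplicity of the zero eigenvalue and on the $k$-independence of $\lambda_1$ are welcome clarifications but do not change the argument.
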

\begin{proof}
Similarly to as in the proof of Theorem \ref{th:glabglab}, we first rewrite the linear system matrix as
  \begin{equation}
    \begin{bmatrix}
      A & Bk & 0\\
      0 & CA^{-1}Bk & 0\\
      C & CA^{-1}Bk & 0
    \end{bmatrix}+\eta\begin{bmatrix}
      0 & 0 & 0\\
      0 & 0 & \dfrac{\mu}{CA^{-1}B}\\
      0 & 0 & \dfrac{\mu}{CA^{-1}B}
    \end{bmatrix}.
  \end{equation}
  The spectrum of the matrix to the left is given by $\lambda(A)\cup\{0,CA^{-1}Bk\}$, which corresponds to the spectrum of a marginally stable matrix. We use, once again, a perturbation argument on the 0-eigenvalue. The corresponding normalized left- and right-eigenvectors are given by
  \begin{equation}
    u_\ell=\begin{bmatrix}
      -CA^{-1} & 0 & 1
    \end{bmatrix}\ \textnormal{and}\  u_r=\begin{bmatrix}
      0 & 0 & 1
    \end{bmatrix}^T.
  \end{equation}
  The eigenvalue bifurcates according to $\lambda(\eta)=\lambda_0+\eta\lambda_1+o(\eta)$ where $\lambda_1=\frac{\mu}{CA^{-1}B}$. Therefore, we can conclude on the existence of a $\bar{\eta}$ such that for any $\eta\in(0,\bar{\eta})$, the equilibrium points $\mathcal{X}^*_{\mu,\eta,k}$ are locally asymptotically stable. Note, moreover, that this result does not depend on the choice for $k$.
\end{proof}

Whereas the above result states that, for any $k,\mu>0$, we can find a small enough $\eta>0$, that makes the unique equilibrium point of the closed-loop system locally exponentially stable, the next one provides a condition on the gain $k$ under which the unique equilibrium point of the closed-loop system locally exponentially stable provided that $\eta>0$ is large enough.
\begin{lemma}\label{lem:etalarge}
Assume that the system \eqref{eq:mainsystL} is internally positive and that it satisfies Assumption \ref{hyp:1}. Assume further that $k$ is chosen such that the matrix
 \begin{equation}\label{eq:dsqpodsjdjsd}
  \mathcal{M}(k):= \begin{bmatrix}
     A & Bk\\
     -C & 0
   \end{bmatrix}
 \end{equation}
 is Hurwitz stable. Then, there exists $\bar{\eta}=\bar{\eta}(k)>0$ such that for any $\eta>\bar{\eta}$ the equilibrium point $\mathcal{X}_{\mu,\eta,k}^*$ is locally exponentially stable.
\end{lemma}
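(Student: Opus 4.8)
The plan is to use a singular perturbation / large-$\eta$ argument on the Jacobian $\tilde A(k)$ from \eqref{eq:linear}. The key observation is that as $\eta\to\infty$ the controller behaves like the ``strong binding'' limit, in which $z_2$ becomes a fast variable slaved to its quasi-steady-state value; what survives is precisely the reduced closed-loop matrix $\mathcal M(k)$ in \eqref{eq:dsqpodsjdjsd}. So I would first rescale so the dependence on $\eta$ is exhibited as a singular perturbation. Writing $\varepsilon := 1/\eta$ and performing the change of variables that isolates the fast coordinate — e.g. replacing $\tilde z_2$ by $\zeta := \eta \tilde z_2$ (or equivalently $\tilde z_2 = \varepsilon \zeta$) — the Jacobian $\tilde A(k)$ takes the standard singularly perturbed form
\begin{equation*}
\begin{bmatrix} \dot{\tilde x}\\ \dot{\tilde z}_1 \end{bmatrix} = A_{11}\begin{bmatrix}\tilde x\\\tilde z_1\end{bmatrix} + A_{12}\zeta,\qquad \varepsilon\,\dot\zeta = A_{21}\begin{bmatrix}\tilde x\\\tilde z_1\end{bmatrix} + A_{22}\zeta,
\end{equation*}
where from \eqref{eq:linear} the fast block is $A_{22} = \mu/(CA^{-1}B) < 0$, so $A_{22}$ is Hurwitz (indeed scalar and negative). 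This is exactly the hypothesis needed to invoke the standard two-time-scale stability theorem.

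Next I would identify the reduced (slow) subsystem obtained by setting $\varepsilon=0$, i.e. $\zeta = -A_{22}^{-1}A_{21}(\tilde x,\tilde z_1)^T$, and substitute back. The slow dynamics matrix is then the Schur complement $A_{11} - A_{12}A_{22}^{-1}A_{21}$. The content of the computation — which I would carry out explicitly but which is routine — is that this Schur complement equals $\mathcal M(k)$ (up to an obvious similarity/relabeling of coordinates): the second controller equation in \eqref{eq:mainsystK2b}, once $z_2$ is eliminated at quasi-steady state, reduces to $\dot{\tilde z}_1 = \mu - k\eta z_1 z_2 \to$ an integrator driven by $\mu - y$, whence the $-C$ block and the zero in the lower-right corner of $\mathcal M(k)$. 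By hypothesis $\mathcal M(k)$ is Hurwitz, so the reduced system is exponentially stable, and the boundary-layer system $d\zeta/d\tau = A_{22}\zeta$ is exponentially stable since $A_{22}<0$.

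With both the reduced matrix and the fast matrix Hurwitz, the classical singular perturbation result (see e.g. Khalil, Theorem 11.4, or Kokotović–Khalil–O'Reilly) gives an $\bar\varepsilon>0$ such that the full matrix is Hurwitz for all $\varepsilon\in(0,\bar\varepsilon)$, i.e. $\tilde A(k)$ is Hurwitz for all $\eta > \bar\eta := 1/\bar\varepsilon$; this makes $\mathcal X^*_{\mu,\eta,k}$ locally exponentially stable. Since only $A$ Hurwitz (Assumption \ref{hyp:1}) and $CA^{-1}B<0$ (Proposition \ref{prop:dsdsds}) are used beyond the hypothesis on $\mathcal M(k)$, and $\bar\varepsilon$ depends only on $A,B,C,k,\mu$, the claimed dependence $\bar\eta = \bar\eta(k)$ is immediate (one may fold the harmless $\mu$-dependence into $k$, or just note $\mu$ rescales the fast block by a positive constant without affecting Hurwitzness). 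The main obstacle I anticipate is purely bookkeeping: setting up the rescaling so that the $\eta$-dependence is genuinely of singular-perturbation type rather than a regular perturbation, and then verifying cleanly that the Schur complement is literally $\mathcal M(k)$ — if instead one prefers to avoid citing the singular perturbation theorem wholesale, one could argue directly via a continuity/root-locus argument on the characteristic polynomial $\det(\lambda I - \tilde A(k))$, factoring out the fast eigenvalue $\approx \eta\mu/(CA^{-1}B)$ and showing the remaining $\,(n+1)$ roots converge to $\sigma(\mathcal M(k))\subset\mathbb C_{<0}$ as $\eta\to\infty$.
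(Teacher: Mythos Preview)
Your proposal is correct and the Schur-complement computation you outline does indeed collapse cleanly to $\mathcal{M}(k)$ (carrying it out: at quasi-steady state $\zeta=-\tfrac{CA^{-1}B}{\mu}(C\tilde x+CA^{-1}Bk\,\tilde z_1)$, and substituting into the $\dot{\tilde z}_1$ equation the $CA^{-1}Bk\,\tilde z_1$ terms cancel, leaving $\dot{\tilde z}_1=-C\tilde x$), so there is no hidden obstacle in the bookkeeping you flagged.

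That said, your route is genuinely different from the paper's. The paper does not invoke singular perturbation theory at all: instead it rescales by $\eps=1/\eta$ so that the Jacobian becomes (up to the positive scalar factor $\eps$, which does not affect Hurwitzness) a matrix $M_1+\eps M_0$, where $M_1$ has spectrum $\{\mu/(CA^{-1}B),0\}$ with the zero eigenvalue \emph{semisimple} of multiplicity $n+1$. It then applies first-order eigenvalue perturbation theory for a semisimple eigenvalue: the $n+1$ zero eigenvalues bifurcate as $\eps\lambda_1^i+o(\eps)$, where the $\lambda_1^i$ are the eigenvalues of the reduced matrix $v_\ell M_0 v_r$, and this reduced matrix is computed to be exactly $\mathcal{M}(k)$. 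Your Schur-complement/slow-manifold reduction and the paper's eigenvector projection $v_\ell(\cdot)v_r$ are two faces of the same linear-algebraic fact, so the arguments are equivalent in content; the paper's version is slightly more self-contained (no black-box theorem cited), while yours makes the two-time-scale structure and the ``strong-binding limit'' interpretation more transparent and would generalize more readily beyond the linearized setting. Your closing alternative --- factoring the characteristic polynomial and tracking the $n+1$ slow roots toward $\sigma(\mathcal{M}(k))$ --- is in fact precisely the spirit of the paper's argument.
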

\begin{proof}
  Let $\eps=1/\eta$, and rewrite the matrix of the linearized system as
  \begin{equation}\label{eq:dksqldso}
\begin{bmatrix}
      0 & 0 & 0\\
      0 & 0 & \dfrac{\mu}{CA^{-1}B}\\
      0 & 0 & \dfrac{\mu}{CA^{-1}B}
    \end{bmatrix}+\begin{bmatrix}
      A & Bk & 0\\
      0 & CA^{-1}Bk & 0\\
      C & CA^{-1}Bk & 0
    \end{bmatrix}\eps.
  \end{equation}
  The case when $\eps=0$ corresponds to the case $\eta=\infty$. So, in this case, we actually perturb from infinity. The matrix to the left has spectrum $\left\{\dfrac{\mu}{CA^{-1}B},0\right\}$ where the 0-eigenvalue has multiplicity $n+1$. The left- and right-eigenvectors are given by
  \begin{equation}
    v_\ell=\begin{bmatrix}
      I_n & 0 & 0\\
      0 & 1 & -1
    \end{bmatrix}\ \textnormal{and}\ v_r=\begin{bmatrix}
      I_n & 0 & 0\\
      0 & 1 & 0
    \end{bmatrix}^T.
  \end{equation}
  Since the rank of the eigenvectors is $n+1$, hence the 0-eigenvalue is semisimple\footnote{An eigenvalue is semisimple when the algebraic multiplicity equals the geometric multiplicity.}. Therefore it smoothly bifurcates under the action of the perturbation parameter $\eps>0$ according to the expression
  \begin{equation}
    \lambda^i(\eps)=0+\eps\lambda_1^i+o(\eps)
  \end{equation}
  where the $\lambda_1^i$'s are the eigenvalues of the matrix
  \begin{equation}
    v_\ell\begin{bmatrix}
      A & Bk & 0\\
      0 & CA^{-1}Bk & 0\\
      C & CA^{-1}Bk & 0
    \end{bmatrix}v_r= \begin{bmatrix}
     A & Bk\\
     -C & 0
   \end{bmatrix}=\mathcal{M}(k).
    \end{equation}
  Therefore, if $k>0$ is chosen such that the above matrix is Hurwitz stable, then the 0-eigenvalues of the left-matrix of \eqref{eq:dksqldso} bifurcate in $n+1$ (distinct or not) eigenvalues that are located in the open left-half plane. The proof is complete.
\end{proof}

The above result shows that if $k$ is chosen such that the matrix $\mathcal{M}(k)$ is Hurwitz stable, then the equilibrium point is locally asymptotically stable for any sufficiently large $\eta$'s. However, this matrix is nothing else but the closed-loop system matrix of the system \eqref{eq:mainsystL} controlled with a standard integral controller. This demonstrates a natural connection between antithetic integral control and standard integral control. \blue{The limiting case $\eta\to\infty$, called "strong binding regime" in \cite{Olsman:19,Olsman:19b}, was used there as a simplifying assumption in order to get simpler stability condition. This assumption was supported by the fact that strong binding affinity occurs naturally in living organisms.  The differences between those results and the ones proposed in this paper lies at the level of generality, the considered mathematical tools, and, as we shall see below, much stronger results on the robustness of the stability of the equilibrium point with respect to $\eta$ or $k$ can be obtained. Other similar results have been obtained in \cite{Qian:18,Qian:19} using singular perturbation theory to account for a very large coupling parameter value. The approach is quite general and addresses different problems such as the approximation of a system into a reduced-order one and bounding trajectories between the two systems in order to conclude on perfect adaptation property for leaky antithetic integral controllers. In this paper, stability of the system is simply assumed.}

We are now in position to state the main result that complements those in \cite{Qian:18,Qian:19,Olsman:19,Olsman:19b}
\begin{theorem}\label{eq:mainconstructive}
Assume that the system \eqref{eq:mainsystL} is internally positive and that it satisfies Assumption \ref{hyp:1}.  Define $\bar{k}_\infty$ as
  \begin{equation}
    \bar{k}_\infty:=\sup\left\{\kappa\ge0\ \textnormal{s.t.}\ \mathcal{M}(\kappa):=\begin{bmatrix}
      A & B\kappa\\
      -C & 0
    \end{bmatrix}\ \textnormal{Hurwitz}\right\}.
  \end{equation}
  When the matrix is Hurwitz for all $\kappa>0$, then we set $\bar k_\infty=\infty$.

  Then, for all $k\in(0,\bar{k}_\infty)$, the equilibrium point $\mathcal{X}_{\mu,\eta,k}^*$ is locally asymptotically stable for all $\eta,\mu>0$.
\end{theorem}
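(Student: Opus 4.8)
The plan is to prove that the Jacobian matrix $\tilde A$ of \eqref{eq:linear} at the equilibrium $\mathcal X^*_{\mu,\eta,k}$ is Hurwitz whenever $k\in(0,\bar k_\infty)$ and $\eta,\mu>0$, working in the frequency domain. Writing $g_0:=-CA^{-1}B>0$ (Proposition \ref{prop:dsdsds}), I would first note from \eqref{eq:point} that $\tilde A$ depends on $\eta$ and $\mu$ only through the product $p:=\eta\mu$, so it suffices to treat the one‑parameter family $p\in(0,\infty)$ for each fixed admissible $k$. A Schur‑complement elimination of the $2\times2$ controller block then gives, with $D(s):=\det(sI-A)$ and $G(s):=C(sI-A)^{-1}B=N(s)/D(s)$,
\[
\det(sI-\tilde A)=D(s)\Big(s^2+\big(g_0k+\tfrac{p}{g_0}\big)s+\tfrac{pk}{g_0}G(s)\Big)=s(s+g_0k)D(s)+\tfrac{p}{g_0}\det\!\big(sI-\mathcal M(k)\big),
\]
the second form following from $D(s)\big(s+kG(s)\big)=\det(sI-\mathcal M(k))$. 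This already displays the two limiting regimes: as $p\to0^+$ the spectrum tends to $\{0,-g_0k\}\cup\sigma(A)$, and as $p\to\infty$ it tends to $\sigma(\mathcal M(k))$ together with one eigenvalue escaping to $-\infty$.

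\textbf{Homotopy in $p$.} Fix $k\in(0,\bar k_\infty)$. I would argue that $\tilde A$ is Hurwitz for \emph{all} $p>0$ by combining three facts: (i) for small $p$, Lemma \ref{lem:etasmall} gives stability (this needs nothing about $\mathcal M(k)$); (ii) for large $p$, Lemma \ref{lem:etalarge} gives stability provided $\mathcal M(k)$ is Hurwitz — which is the case here since $k<\bar k_\infty$; (iii) the leading coefficient of $\det(sI-\tilde A)$ equals $1$ for every $p$, so eigenvalues vary continuously in $p$ with no escape to infinity over compact $p$‑intervals. Given (i)–(iii), it remains only to show that $\tilde A$ has no purely imaginary eigenvalue for any $p>0$; connectedness of $(0,\infty)$ then carries Hurwitzness from the small‑$p$ regime to all $p$.

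\textbf{No imaginary‑axis eigenvalue.} Since $\mathcal M(k)$ is Hurwitz (hence nonsingular), $\det(sI-\tilde A)\big|_{s=0}=\tfrac{p}{g_0}\det(-\mathcal M(k))\neq0$, so $s=0$ is excluded. For $s=j\omega$ with $\omega\neq0$, dividing the bracketed factor above by $D(j\omega)\neq0$ and separating real and imaginary parts gives $\tfrac{pk}{g_0}\operatorname{Re}G(j\omega)=\omega^2$ and $\big(g_0k+\tfrac{p}{g_0}\big)\omega+\tfrac{pk}{g_0}\operatorname{Im}G(j\omega)=0$; in particular $\operatorname{Re}G(j\omega)>0$ is forced. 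Eliminating $p$ reduces the crossing condition to the single scalar equation $g_0\operatorname{Re}G(j\omega)\,k^2+\omega\operatorname{Im}G(j\omega)\,k+\omega^2=0$, a quadratic in $k$ whose only positive solutions (when they exist) require $\omega\operatorname{Im}G(j\omega)<0$ and $[\operatorname{Im}G(j\omega)]^2\ge4g_0\operatorname{Re}G(j\omega)$, the smaller one being $k_-(\omega)=2|\omega|\big/\big(|\operatorname{Im}G(j\omega)|+\sqrt{[\operatorname{Im}G(j\omega)]^2-4g_0\operatorname{Re}G(j\omega)}\big)$. Because $\operatorname{Re}G(j\omega)>0$ this obeys $k_-(\omega)>|\omega|/|\operatorname{Im}G(j\omega)|$. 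The remaining task is to prove $\inf_\omega k_-(\omega)\ge\bar k_\infty$, so that for $k<\bar k_\infty$ no admissible $\omega$ exists and the homotopy argument closes.

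\textbf{Expected obstacle.} The crux is this last comparison, together with the point silently used above that $\mathcal M(\kappa)$ is Hurwitz for \emph{every} $\kappa\in(0,\bar k_\infty)$ rather than merely up to its first destabilizing gain. The imaginary‑axis eigenvalues of $\mathcal M(\kappa)$ occur precisely at frequencies with $\operatorname{Re}G(j\omega)=0$ and $\kappa=-\omega/\operatorname{Im}G(j\omega)$, which is exactly the limiting value of the lower bound $|\omega|/|\operatorname{Im}G(j\omega)|$ as $\operatorname{Re}G(j\omega)\to0^+$ — matching the $p\to\infty$ degeneration of $\det(sI-\tilde A)$ into $\det(sI-\mathcal M(k))$. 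Promoting this heuristic to the inequality $\inf_\omega k_-(\omega)\ge\bar k_\infty$ is where I expect the real work: I anticipate needing a root‑locus / crossing‑direction argument tailored to internally positive, Hurwitz‑stable SISO systems, showing that along $\kappa\mapsto\mathcal M(\kappa)$ a conjugate eigenvalue pair can cross the imaginary axis only from the open left to the open right half‑plane (never the reverse), so that $\{\kappa>0:\mathcal M(\kappa)\ \text{Hurwitz}\}=(0,\bar k_\infty)$ and, consequently, no imaginary‑axis eigenvalue of the full antithetic closed‑loop can appear for $k<\bar k_\infty$.
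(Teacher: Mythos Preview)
Your setup and overall skeleton coincide with the paper's: the reduction to the single parameter $p=\eta\mu$, the factored characteristic polynomial
\[
\det(sI-\tilde A)=s(s+g_0k)D(s)+\theta\big(sD(s)+kN(s)\big),\qquad \theta:=p/g_0,
\]
and the appeal to Lemmas~\ref{lem:etasmall} and~\ref{lem:etalarge} to anchor the two ends of a homotopy in $p$. Where you diverge is the no-crossing step, and the route you choose is genuinely harder than the paper's.

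You eliminate $p$ from the two real equations at $s=j\omega$, land on the quadratic $g_0\operatorname{Re}G(j\omega)\,k^2+\omega\operatorname{Im}G(j\omega)\,k+\omega^2=0$, and are then stuck with proving $\inf_\omega k_-(\omega)\ge\bar k_\infty$ --- a comparison between quantities living over \emph{different} frequency sets (your crossings force $\operatorname{Re}G(j\omega)>0$, whereas crossings of $\mathcal M(\kappa)$ occur only where $\operatorname{Re}G(j\omega)=0$). The paper never eliminates $p$. It keeps $\theta$ as the root-locus gain for the transfer function $\big(sD(s)+kN(s)\big)\big/\big(s(s+g_0k)D(s)\big)$ and exploits the structural fact that the pole--zero excess is exactly one: there is a \emph{single} asymptote, at angle $\pi$, with centroid $\chi=\sum p_i-\sum z_i=CA^{-1}Bk<0$ (Vieta, using that $G$ has no direct feedthrough). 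Hence the one branch escaping to infinity as $\theta\to\infty$ does so to $-\infty$ along the real axis, while the remaining $n+1$ branches terminate at the zeros $sD(s)+kN(s)=0$, i.e.\ at the eigenvalues of $\mathcal M(k)$, all in the open left half-plane by hypothesis. That is what closes the argument and completely sidesteps the comparison you flagged as the obstacle. The moral: run the root locus in the parameter you want to let range freely ($\theta$), not in the one you want to constrain ($k$).

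Your secondary worry --- that one must know $\{\kappa>0:\mathcal M(\kappa)\ \text{Hurwitz}\}=(0,\bar k_\infty)$ rather than merely $\sup=\bar k_\infty$ --- is not confronted head-on in the paper either: the proof fixes a $k$ with $\mathcal M(k)$ Hurwitz and establishes stability for all $\theta>0$; the interval formulation then rests on the tacit (and standard) reading of $\bar k_\infty$ as the first destabilising gain of the one-parameter family $\mathcal M(\kappa)$.
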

\begin{proof}
We have also proved in Lemma \ref{lem:etasmall}, that there exist small enough values for $\eta$ for which the equilibrium points are locally asymptotically stable. Similarly, we have proved in Lemma \ref{lem:etalarge} that there exist large enough values for $\eta$ for which the equilibrium points are locally asymptotically stable. The idea is to prove that if $k$ is chosen accordingly, the equilibrium points are locally asymptotically stable for all $\eta\in(0,\infty)$.

We have the following necessary condition that $k$ must be chosen such that $\mathcal{M}(k)$ be Hurwitz stable. We prove that here it that is also sufficient. To this aim, let $\theta=-\dfrac{\mu\eta}{CA^{-1}B}>0$ and, in this case, the Jacobian matrix rewrites
 \begin{equation}
   \Psi(\theta):=\begin{bmatrix}
      A & Bk & 0\\
      0 & CA^{-1}Bk & -\theta\\
      C & CA^{-1}Bk & -\theta
    \end{bmatrix}.
 \end{equation}
 The characteristic polynomial of the above matrix can be shown to be equal to
 \begin{equation}
   \det(sI-\Psi(\theta))=\det(sI-A)\left[s(s-CA^{-1}Bk)+\theta(H(s)k+s)\right]
 \end{equation}
 where $H(s):=C(sI-A)^{-1}B$ and where we have used the Schur determinant formula. Since $A$ is Hurwitz, we just have to study the distribution of the zeros of the second factor. Since the poles and the zeros are located at 0 and the open left half-plane, then their sum
 \blue{\begin{equation}
   F(s,\theta):=s(s-CA^{-1}Bk)+\theta(H(s)k+s)
 \end{equation}
 and decompose $H(s)=N(s)/D(s)$ where $N(s),D(s)$ are polynomials. Therefore the zeros of $F(s,\theta)$ are also the zeros of
 \begin{equation}
    \tilde F(s,\theta):=s(s-CA^{-1}Bk)D(s)+\theta(N(s)k+sD(s)).
 \end{equation}
  From the above expression, if we view the roots of the $ \tilde F(s,\theta)$ as a function of $\theta$, those roots start at those of $s(s-CA^{-1}Bk)D(s)$ when $\theta=0$ and end at those of $N(s)k+sD(s)$ when $\theta=\infty$. Note that $N(s)k+sD(s)$ coincides with the denominator of the transfer function describing the closed-loop system consisting of the system $H(s)$ controlled by the integral controller $k/s$. We know that $N(s)k+sD(s)$ is stable polynomial since $\mathcal{M}(k)$ is Hurwitz stable. We also know that for a small enough $\theta$, $ \tilde F(s,\theta)$ is also a stable polynomial. It remains to characterize the behavior of those roots as $\theta$ increases from 0 to infinity. The difficulty here lies in the fact that some of the roots will escape to infinity following some asymptotes due to the fact that the polynomials $s(s-CA^{-1}Bk)D(s)$ and  $N(s)k+sD(s)$ have different degrees. To do so, we will use a root locus argument \cite{Haidekker:13} and, to this aim, we define the transfer function
 \begin{equation}
 \begin{array}{lcl}
      G(s)&:=&\dfrac{H(s)k+s}{s(s-CA^{-1}Bk)}\\
      &=&\dfrac{N(s)k+D(s)s}{s(s-CA^{-1}Bk)D(s)}.
 \end{array}
 \end{equation}
  The root locus analysis states that if $n_z$ and $n_p$ are the number of zeros and poles of the above transfer function, then one has $n_p-n_z$ asymptotes that intersect the real axis at the point $$\chi:=\dfrac{\sum_{i=1}^{n_p}p_i-\sum_{i=1}^{n_z}z_i}{n_p-n_z}$$ where $p_i$ and $z_i$ are the poles and zeros, counting multiplicity and leaves this point with angle
 \begin{equation}
   \varphi_i=\dfrac{\pi+2(i-1)\pi}{n_p-n_z},i=1,\ldots,n_p-n_z.
 \end{equation}
  The polynomial $s(s-CA^{-1}Bk)D(s)$ has $n+2$ roots where $n$ denotes the order of the polynomial $D(s)$, so we have that $n_p=n+2$. Similarly, $N(s)k+sD(s)$ has $n+1$ roots since the degree of $N(s)$ is at most that of $D(s)$ and, hence, $n_z=n+1$. Therefore, there is only one asymptote leaving the point $\chi$ with angle $\varphi_1=\pi$. Since there is no direct-feedthrough in the system (the input does not directly influence the output) then we have that the order of the polynomial $N(s)$ is strictly less than $n$. From Vieta's formulas, we have that the sum of the roots of $kN(s)+sD(s)=0$ is equal to $-d_{n-1}$ where $d_{n-1}$ is the coefficient of the polynomial $D(s)$ associated with the $n-1$-th power. Similarly, the sum of the zeros of $s(s-CA^{-1}Bk)D(s)$ is equal to $CA^{-1}Bk-d_{n-1}$. Therefore, $$\chi=CA^{-1}Bk-d_{n-1}+d_{n-1}=CA^{-1}Bk<0.$$ As a result, the only escaping root to infinity escapes to $-\infty$ along the horizontal axis and, therefore, the roots of the polynomial $ \tilde F(s,\theta)$ are located in the open left half-plane for all $\theta\in(0,\infty)$. This proves the result.}
\end{proof}

\blue{This result is important for multiple reasons. First of all, it is a result that shows that $\eta$ can be freely chosen as long as $k\in(0,\bar k_\infty)$. This parameter can be used to ensure additional properties for the closed-loop system such as the settling time. However, what this result states is that the strong-binding regime (i.e. $\eta=\infty$) is the worst case regime in terms of stability. Ensuring stability in this regime is sufficient to ensure stability for any other binding regime. This is particulary important since it means that the results obtained in \cite{Olsman:19,Olsman:19b} remain valid even when the coupling parameter is finite, which is likely to be the case in practice.} \blue{Perhaps surprisingly, a dual result can be found in which the gain $k$ can be made free by suitably choosing $\eta$. This result is stated below:}
\blue{\begin{theorem}\label{eq:mainconstructive2}
Assume that the system \eqref{eq:mainsystL} is internally positive and that it satisfies Assumption \ref{hyp:1}. Define $\bar{\eta}_\infty$ as
  \begin{equation}
    \bar{\eta}_\infty:=\dfrac{g^2}{\bar{\mu}}\sup\left\{\kappa\ge0\ \textnormal{s.t.}\ \mathcal{M}(\kappa):=\begin{bmatrix}
      A & B\kappa\\
      -C & 0
    \end{bmatrix}\ \textnormal{Hurwitz}\right\}
  \end{equation}
  where $0<\mu\le\bar{\mu}$. When the matrix is Hurwitz for all $\kappa>0$, then we set $\bar{\eta}_\infty=\infty$.

  Then, for all $\eta\in(0,\bar{\eta}_\infty)$, the equilibrium point $\mathcal{X}_{\mu,\eta,k}^*$ is locally asymptotically stable for all $k>0$ and all $\mu\in(0,\bar{\mu})$.
\end{theorem}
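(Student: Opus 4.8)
The plan is to follow the route of the proof of Theorem~\ref{eq:mainconstructive}, but to run the root-locus argument with respect to the gain $k$ rather than with respect to $\theta$. Write $g:=-CA^{-1}B>0$ (positive by Proposition~\ref{prop:dsdsds}), fix $\mu,\eta>0$, and set $\theta:=\mu\eta/g>0$, so that the Jacobian of the closed-loop system at $\mathcal{X}_{\mu,\eta,k}^*$ is the matrix $\Psi(\theta)$ appearing in the proof of Theorem~\ref{eq:mainconstructive}. Writing $H(s)=N(s)/D(s)$ with $D(s)=\det(sI-A)$ Hurwitz and $\deg N\le n-1$ (no direct feedthrough), its characteristic polynomial is, as computed there,
\[
\tilde F(s)=s(s+gk)D(s)+\theta\bigl(kN(s)+sD(s)\bigr)=sD(s)(s+\theta)+k\bigl(gsD(s)+\theta N(s)\bigr),
\]
the second form being the one in which $k$ enters affinely.

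The first step is the algebraic observation that $gsD(s)+\theta N(s)=g\bigl(sD(s)+(\theta/g)N(s)\bigr)$ is, up to the positive factor $g$, the characteristic polynomial of $\mathcal{M}(\theta/g)$. From $\eta<\bar{\eta}_\infty=\tfrac{g^2}{\bar{\mu}}\bar{k}_\infty$ and $\mu\le\bar{\mu}$ we get $\theta/g=\mu\eta/g^2<\bar{k}_\infty$, hence (recalling from Theorem~\ref{eq:mainconstructive} that $\mathcal{M}(\kappa)$ is Hurwitz for every $\kappa\in(0,\bar{k}_\infty)$) the polynomial $gsD(s)+\theta N(s)$ is Hurwitz of degree $n+1$. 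This plays here the role that ``$\mathcal{M}(k)$ Hurwitz'' played in Theorem~\ref{eq:mainconstructive}, with the roles of $k$ and $\theta$ interchanged.

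Next I would study the root locus of $\tilde F$ in $k$ through the open-loop transfer function
\[
\tilde G(s):=\dfrac{gsD(s)+\theta N(s)}{sD(s)(s+\theta)}=\dfrac{gs+\theta H(s)}{s(s+\theta)}.
\]
Its poles are $0$, $-\theta$ and the $n$ eigenvalues of $A$ — all in the closed left half-plane, the only imaginary-axis one being the simple pole at the origin — while, by the previous step, its $n+1$ zeros lie in the open left half-plane. Thus $n_p-n_z=1$: there is a single asymptote, at angle $\pi$, with centroid $\chi=-\theta<0$ (a Vieta computation, using $\deg N\le n-1$), so the escaping branch runs off to $-\infty$ along the negative real axis; and as $k\to0^+$ the pole at the origin departs toward $s\approx-kg<0$. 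Concluding, as in Theorem~\ref{eq:mainconstructive}, that the whole root locus stays in the open left half-plane for every $k>0$, one obtains that $\tilde F$ is Hurwitz for all $k>0$ and all $\mu\in(0,\bar{\mu})$, i.e., the asserted local asymptotic stability of $\mathcal{X}_{\mu,\eta,k}^*$.

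The delicate point — in my view the main obstacle — is this last inference: a root locus that starts from a marginally stable configuration at $k=0$ and ends, as $k\to\infty$, with all finite limit roots in the open left half-plane and one root at $-\infty$ need not stay in the open left half-plane for intermediate $k$. I would close this gap with a Nyquist argument: $\tilde G$ has no open right half-plane poles, so the number of open right half-plane zeros of $1+k\tilde G$ equals the number of clockwise encirclements of $-1/k$ by the Nyquist plot of $\tilde G$, and it suffices to show that this plot encircles no point of $(-\infty,0)$. Since $\tilde G(s)\sim g/s$ near the origin, the indentation around the origin pole contributes a large clockwise arc lying in the open right half-plane, and the finite-frequency part of the plot is controlled using the stability of $A$ together with the external positivity of the plant (in particular $|H(j\omega)|\le H(0)=g$, as in the proof of Proposition~\ref{prop:dsdsds}). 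As it happens, $\tilde F$ is exactly the polynomial studied in the proof of Theorem~\ref{eq:mainconstructive} with the two one-parameter families swapped, so any argument closing the gap there closes it here as well.
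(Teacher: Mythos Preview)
Your proposal is correct and follows essentially the same route as the paper's own proof: rewrite $\tilde F$ so that $k$ enters affinely, identify $gsD(s)+\theta N(s)$ with (a positive multiple of) the characteristic polynomial of $\mathcal{M}(\theta/g)$, use $\eta<\bar\eta_\infty$ to force $\theta/g<\bar k_\infty$, and then run the single-asymptote root-locus argument in $k$ exactly as Theorem~\ref{eq:mainconstructive} does in $\theta$. Your centroid computation $\chi=-\theta$ is in fact the correct one (the paper records $\chi=-\mu\eta/g-d_{n-1}+gd_{n-1}$, which contains a slip), and the ``delicate point'' you flag about intermediate values of $k$ is a genuine issue that the paper's sketch likewise leaves implicit in both Theorems~\ref{eq:mainconstructive} and~\ref{eq:mainconstructive2}.
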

\begin{proof}
  The proof follows from the same lines as the proof of Theorem \ref{eq:mainconstructive} and is therefore only sketched. The starting point is again the characteristic polynomial $ \tilde F(s,\theta)$ which we rewrite as
  $$\tilde F(s,\theta)= (s+\theta)sD(s)+k(gsD(s)+\theta N(s))$$ where $g=-CA^{-1}B$ is the gain of the system. Interestingly, we can see that when $k=0$, the roots of that polynomial are those of $(s+\theta)sD(s)$ whereas when $k\to\infty$ the roots tend to those of $gsD(s)+\theta N(s)$. We know that for small enough $k$'s, the polynomial is stable regardless the values for $\mu,\eta>0$. The roots of  $gsD(s)+\theta N(s)$ are in the open left half-plane for all  $\mu\in(0,\bar{\mu})$ if and only if $\eta\in(0,\bar{\eta}_\infty)$. To show that the system remains stable for all values for $\eta$, we again rely on a root locus argument and note that the degrees of the numerator and the denominator are again equal to $n+1$ and $n+2$, so the only asymptote coincides with the real axis and point towards $-\infty$. The sums of zeros and poles are given by $-gd_{n-1}$ and $-\mu\eta/g-d_{n-1}$, respectively. In this regard, the asymptote starts at $\chi=-\mu\eta/g-d_{n-1}+gd_{n-1}$ and points towards $-\infty$ along the real axis. Therefore, the roots of the polynomial $\tilde F(s,\theta)$ lie in the open left half-plane for all $k\in(0,\infty)$ provided that $\eta\in(0,\bar{\eta}_\infty)$.
\end{proof}}

\begin{example}
  Let us consider again the system of Example \ref{ex:example1}. After tedious calculations, it was shown that if $k<\bar{k}_\infty=2$, then the closed-loop system is locally asymptotically stable for all $\eta,\mu>0$. We now prove the same result using Theorem \ref{eq:mainconstructive}. We first form the matrix
  \begin{equation}
    \begin{bmatrix}
      A & Bk\\-C & 0
    \end{bmatrix}=\begin{bmatrix}
      -1 & 0 &k\\
      1 & -1 &0\\
      0 & -1 & 0
    \end{bmatrix}.
  \end{equation}
  The corresponding characteristic polynomial is given by
  \begin{equation}
    \lambda^3+2\lambda^2+\lambda+k.
  \end{equation}
  The Routh-Hurwitz stability criterion yields the conditions $k>0$ and $k-2<0$. Hence, $\bar{k}_\infty=2$. Similarly, we have that $\bar{\eta}_\infty=2g^2/\mu$.
\end{example}

\subsection{Computing $\overline{k}_\infty$ and $\overline{\eta}_\infty$}\label{sec:linan3}

We propose in this section some ways to establish the value for $\bar k$. The first approach allows one to determine whether this value is finite using the concept of strictly positive real transfer functions and strictly passive systems. The second approach is based on stability crossing where we study the existence of purely imaginary eigenvalues which essentially reduces to the analysis of some polynomials.\\

\noindent\textbf{The case $\boldsymbol{\bar{k}_\infty=\infty}$ and $\boldsymbol{\bar{\eta}_\infty=\infty}$.} Interestingly, there are cases where the equilibrium point of the closed-loop system  \eqref{eq:mainsystL}-\eqref{eq:mainsystK2b} is locally exponentially stable for any $\mu,\eta,k>0$. This is formalized in the result below:

\begin{theorem}\label{th:passive}
Assume that the system \eqref{eq:mainsystL} is internally positive and that it satisfies Assumption \ref{hyp:1}. Define further its transfer function as $G(s):=C(sI-A)^{-1}B$. Then, the following statements are equivalent:
\begin{enumerate}[label=({\alph*})]
   \item The system \eqref{eq:mainsystL} is strictly passive.
   \item The transfer function $G$ is strictly positive real, that is, $\Re[G(j\omega)]>0$ for all $\omega\in\mathbb{R}_{\ge0}$ and $$\lim_{\omega\to\infty}\omega^2\Re[G(j\omega)]>0.$$
  \item There exist symmetric positive definite matrices $P,Q$ such that $A^TP+PA=-Q$ and $PB=C^T$.
  \item The equilibrium point of the closed-loop system  \eqref{eq:mainsystL}-\eqref{eq:mainsystK2b} is locally exponentially stable for any $\mu,\eta,k>0$.
\end{enumerate}
\end{theorem}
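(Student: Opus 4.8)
The plan is to establish the cycle $(a)\Rightarrow(c)\Rightarrow(d)\Rightarrow(b)\Rightarrow(a)$. The equivalence $(a)\Leftrightarrow(c)$ is the positive real (Kalman--Yakubovich--Popov) lemma and $(a)\Leftrightarrow(b)$ is the frequency-domain characterization of strict passivity; I would invoke both as classical facts and only remark that the requirement $\lim_{\omega\to\infty}\omega^2\Re[G(j\omega)]>0$ in $(b)$ is exactly what is needed to pass from the frequency inequality to the solvability of the \emph{equality}-constrained Lyapunov inequality in $(c)$, since $G$ is strictly proper and hence $G(j\infty)=0$. Only $(c)\Rightarrow(d)$ and $(d)\Rightarrow(b)$ then require new work.

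For $(c)\Rightarrow(d)$, note that by Theorem~\ref{eq:mainconstructive} it suffices to prove $\bar k_\infty=\infty$, i.e. that $\mathcal{M}(k)$ is Hurwitz for every $k>0$. Take $P,Q$ positive definite as in $(c)$ and use the quadratic form $V(x,e)=x^\top Px+ke^2$ along the trajectories of $\dot x=Ax+Bke$, $\dot e=-Cx$ --- which is precisely the linear system with state matrix $\mathcal{M}(k)$. Using $A^\top P+PA=-Q$ and $PB=C^\top$ (hence $B^\top P=C$), the indefinite cross-terms cancel and $\dot V=-x^\top Qx\le 0$; LaSalle's invariance principle on $\{x=0\}$ forces $e=0$ (here $B\ne 0$ since $G\not\equiv 0$ by Assumption~\ref{hyp:1}), so $\mathcal{M}(k)$ is asymptotically --- hence, by linearity, exponentially --- stable. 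As $k>0$ was arbitrary, $\bar k_\infty=\infty$ and Theorem~\ref{eq:mainconstructive} yields $(d)$.

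For $(d)\Rightarrow(b)$ I would argue by contraposition. Assume $G$ is not strictly positive real. If $G$ has a zero in the closed right half-plane, then for large $k$ the spectrum of $\mathcal{M}(k)$ approaches that zero, so $\bar k_\infty<\infty$ and $(d)$ fails via Theorem~\ref{eq:mainconstructive}. Otherwise $G$ is minimum phase, and failure of strict positive realness means either (i) $\Re[G(j\omega_0)]\le 0$ for some finite $\omega_0>0$, or (ii) $\Re[G(j\omega)]>0$ for all finite $\omega$ but $\lim_{\omega\to\infty}\omega^2\Re[G(j\omega)]\le 0$. I would exploit the factorization from the proof of Theorem~\ref{eq:mainconstructive}: with $G=N/D$ as there, $g:=-CA^{-1}B>0$ and $\theta:=\mu\eta/g>0$, one has $\det(sI-\tilde A(k))=\det(sI-A)\,F(s,\theta)$ with $F(s,\theta)=s^2+(gk+\theta)s+\theta kG(s)$, and $F(j\omega,\theta)=0$ is equivalent to $\omega^2=\theta k\,\Re[G(j\omega)]$ together with $gk+\theta=-\omega\,\Im[G(j\omega)]/\Re[G(j\omega)]$. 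An AM--GM argument shows this pair is solvable with $k,\theta>0$ precisely when some $\omega>0$ satisfies $\Im[G(j\omega)]<0$ and $\Im[G(j\omega)]^2\ge 4g\,\Re[G(j\omega)]$. In case (i), if $\omega_1>0$ is the smallest frequency where $\Re[G(j\omega)]$ vanishes, such an $\omega$ exists at frequencies slightly below $\omega_1$ as soon as $\Im[G(j\omega_1)]<0$ (since then $\Im[G(j\omega)]^2$ is bounded away from $0$ there while $4g\,\Re[G(j\omega)]\to 0$); the negativity $\Im[G(j\omega_1)]<0$ is where internal positivity must be used, via $|G(j\omega)|\le G(0)$ (DC-maximality of the modulus) and $\Im[G(j\omega)]\le 0$ as $\omega\to 0^+$ (because $G'(0)=-CA^{-2}B\le 0$ when $A$ is Metzler and Hurwitz), to control the phase of $G$ up to the first sign change of $\Re G$. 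In case (ii), the two roots of $sD(s)+kN(s)$ of largest modulus behave like $\pm j\sqrt{k\,CB}$ with real part converging to a nonnegative limit fixed by $\lim_{\omega\to\infty}\omega^2\Re[G(j\omega)]$, so $\mathcal{M}(k)$ again fails to be Hurwitz for large $k$. In all cases one produces $(\mu,\eta,k)$ for which $\tilde A(k)$ has an eigenvalue in the closed right half-plane; combined with stability of $\mathcal{X}_{\mu,\eta,k}^*$ for small parameters (Lemma~\ref{lem:etasmall}, Theorem~\ref{th:glabglab}) and continuity of the spectrum in $(\mu,\eta,k)$, this contradicts $(d)$.

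The hard step is thus $(d)\Rightarrow(b)$: turning ``$G$ is not strictly positive real'' into an honest destabilizing triple $(\mu,\eta,k)$. The obstruction is that a bare inequality $\Re[G(j\omega_0)]\le 0$ does not by itself yield a frequency at which both crossing conditions $\Im[G(j\omega)]<0$ and $\Im[G(j\omega)]^2\ge 4g\,\Re[G(j\omega)]$ hold --- one must rule out ``benign'' sign changes of $\Re G$ that occur where $\Im G\ge 0$ --- and this is the single place in the proof where the Metzler (rather than merely Hurwitz) structure of $A$ is genuinely needed, through the sign of $G'(0)$ and the modulus bound $|G(j\omega)|\le G(0)$.
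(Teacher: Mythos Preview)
Your $(c)\Rightarrow(d)$ via the Lyapunov function $V(x,e)=x^\top Px+ke^2$ is correct and is a genuinely different route from the paper. The paper never uses $(c)$ to reach $(d)$; it cites $(a)\Leftrightarrow(b)\Leftrightarrow(c)$ as classical and then argues $(b)\Leftrightarrow(d)$ in one stroke via the Nyquist criterion for the loop transfer $L(s)=kG(s)/s$: since $\arg L(j\omega)=\arg G(j\omega)-\pi/2$, the matrix $\mathcal{M}(k)$ is destabilized for some $k>0$ iff $\arg G(j\omega_c)=-\pi/2$ for some $\omega_c>0$, i.e.\ iff the Nyquist curve of $G$ meets the negative imaginary axis. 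Your Lyapunov argument is a nice alternative for the forward direction: it shows transparently why the KYP equalities $A^\top P+PA=-Q$, $PB=C^\top$ are exactly what annihilate the cross-term in $\dot V$ and hence make $\mathcal{M}(k)$ Hurwitz for \emph{every} $k>0$, after which Theorem~\ref{eq:mainconstructive} closes the loop.

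Your $(d)\Rightarrow(b)$, however, has a real gap precisely where you flag it. The conclusion $\Im[G(j\omega_1)]<0$ at the first zero $\omega_1$ of $\Re G$ does \emph{not} follow from the two facts you invoke. The bound $|G(j\omega)|\le G(0)$ keeps the Nyquist curve of $G$ inside a disk, and $G'(0)\le 0$ makes it leave $G(0)$ into the lower half-plane, but neither prevents the curve from re-entering the upper half-plane and first meeting the imaginary axis on its \emph{positive} part; in that scenario your crossing pair $\Im G<0$, $\Im[G]^2\ge 4g\,\Re G$ is simply unavailable near $\omega_1$, and no destabilizing $(\mu,\eta,k)$ is produced by your mechanism. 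A secondary issue: the phrase ``$(d)$ fails via Theorem~\ref{eq:mainconstructive}'' when $\bar k_\infty<\infty$ invokes that theorem in the wrong direction --- Theorem~\ref{eq:mainconstructive} says $k<\bar k_\infty\Rightarrow$ stable, not the converse. To pass from ``$\mathcal{M}(k)$ not Hurwitz'' to ``equilibrium unstable for some $(\mu,\eta)$'' you need the large-$\eta$ limit from the proof of Lemma~\ref{lem:etalarge} (the closed-loop eigenvalues tend to those of $\mathcal{M}(k)$ as $\eta\to\infty$), which you should cite explicitly.

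The paper's Nyquist route avoids your entire two-parameter crossing analysis: it works directly with $\mathcal{M}(k)$, and an imaginary-axis eigenvalue $j\omega$ of $\mathcal{M}(k)$ forces $G(j\omega)=-j\omega/k$, which already pins down $\Re G(j\omega)=0$ \emph{and} $\Im G(j\omega)<0$ automatically. If you want to keep your Lyapunov argument for $(c)\Rightarrow(d)$, the cleanest repair is to replace your $(d)\Rightarrow(b)$ by this argument on $\mathcal{M}(k)$ together with the $\eta\to\infty$ limit.
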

\begin{proof}
\blue{The proof of the equivalence between two first statements can be found in \cite{Kottenstette:14}. A proof for the statement (b) can be also found in \cite{Tao:90}. The equivalence with the third statement comes from the  Kalman-Yakubovich-Popov Lemma; see e.g \cite{Khalil:02}. In this regard, we simply need to prove the equivalence with the last statement. The proof  follows from an application of the Nyquist stability criterion. First note that, since $-CA^{-1}B>0$, then $\Re[G(j\omega)]>0$ for all $\omega\in\mathbb{R}_{\ge0}$ is equivalent to saying that $\arg[G(j\omega)]\in(-\pi/2,\pi/2)$ for all $\omega\in\mathbb{R}_{\ge0}$. The loop-transfer associated with the system described by $\mathcal{M}(k)$ is given by $L(s)=kG(s)/s$ and we have that}
\begin{equation}
\begin{array}{rcl}
    \arg(L(j\omega))  &=&    \arg(kG(j\omega))-\arg(j\omega)\\
                                    &=&     \arg(G(j\omega))-\pi/2.
\end{array}
\end{equation}
and
\begin{equation}
|L(j\omega)|=\dfrac{k|G(j\omega)|}{\omega}.
\end{equation}
Since the system $G(s)$ is stable then, from the Nyquist stability criterion, the closed-loop system is unstable if and only if there is an $\omega_c>0$ such that $\arg(L(j\omega_c))=-\pi$ and $k\ge k_c$ where $k_c=\omega_c/|G(j\omega_c)|$; i.e. the Nyquist plot encircles at least once the critical point $-1$. For such an $\omega_c$ to exist, we need that  $\arg(G(j\omega_c))=-\pi/2$. Therefore, a necessary and sufficient condition for this  $\omega_c$ to not exist is that $\Re[G(j\omega)]>0$ for all $\omega\in\mathbb{R}_{\ge0}$. This concludes the proof.
\end{proof}

\blue{We give below an example of a controlled reaction network satisfying such a condition.
\begin{example}
  Let us consider a reaction network represented by the following linear system which is inspired from an example in \cite{Olsman:19}
  \begin{equation}
    \begin{array}{rcl}
      \dot{x}(t)&=&\begin{bmatrix}
        -\gamma & k_1\\
        k_2 & -\gamma
      \end{bmatrix}x(t)+\begin{bmatrix}
        0\\1
      \end{bmatrix}\\
      y(t)&=&\begin{bmatrix}
        0 & 1
      \end{bmatrix}
    \end{array}
  \end{equation}
  where $\gamma^2-k_1k_2>0$. This system is internally positive with a Hurwitz stable system matrix and we have that $-CA^{-1}B=\gamma/(\gamma^2-k_1k_2)$. The associated transfer function is given by
  \begin{equation}
    H(s)=\dfrac{s+\gamma}{s^2+2\gamma s+\gamma^2-k_1k_2}.
  \end{equation}
  Clearly the relative degree is equal to one and we have that
  \begin{equation}
    \Re[H(j\omega)]=\dfrac{\gamma\omega^2+\gamma^2-k_1k_2}{(-\omega^2+\gamma^2-k_1k_2)^2+4\gamma^2\omega^2}.
  \end{equation}
  Since $\gamma^2-k_1k_2>0$, the numerator is always positive and we have that
  $$\lim_{\omega\to\infty}\omega^2\Re[G(j\omega)]=\gamma>0.$$
  Hence, the transfer function is strictly positive real and the equilibrium point of the closed-loop system will be locally exponentially stable for all positive controller parameters $\mu,k$ and $\theta$.

  Alternatively, we can check the condition of statement (c) in Theorem \ref{th:passive}. Let us define
  \begin{equation}
    P=\begin{bmatrix}
      p_1 & p_2\\
      p_2 & p_3
    \end{bmatrix}.
  \end{equation}
  Then, the condition that $PB-C^T=0$ yields $p_2=0$ and $p_3=1$. Moreover, we have that
  \begin{equation}
    A^TP+PA=\begin{bmatrix}
      -2p_1\gamma & p_1k_1+k_2\\
      p_1k_1+k_2 & -2\gamma
    \end{bmatrix}.
  \end{equation}
  This matrix is negative definite if and only if the trace is negative and the determinant is positive. As the trace is negative for all $p_1>0$, we need to find a suitable value for $p_1>0$ such that the determinant is positive. This yields the condition
  \begin{equation}
    k_1^2p_1^2+2p_1(k_1k_2-2\gamma^2)+k_2^2<0.
  \end{equation}
  This condition is minimum for $p_1=\dfrac{2\gamma^2-k_1k_2}{k_1^2}$ which yields
  \begin{equation}
       A^TP+PA=\dfrac{2\gamma}{k_1^2}\begin{bmatrix}
      -(2\gamma^2-k_1k_2)& k_1\gamma\\
      k_1\gamma & -k_1^2.
    \end{bmatrix}
  \end{equation}
  The determinant of the matrix without the factor is then equal to $k_1^2(\gamma^2-k_1k_2)$ and is positive. This proves that the system is strictly passive.
\end{example}}

\noindent\textbf{The case $\boldsymbol{\bar{k}_\infty<\infty}$ and $\boldsymbol{\bar{\eta}_\infty<\infty}$ .} When the test previously presented fails, then $\bar k$ is necessarily finite and its value can be computed using a stability crossing test, a popular method in the time-delay systems community \cite{Niculescu:01,GuKC:03}. This is stated in the result below:
\blue{\begin{proposition}\label{prop:k_bar}
Assume that the system \eqref{eq:mainsystL} is internally positive and that it satisfies Assumption \ref{hyp:1}. Let $P(s,k)$ be the characteristic polynomial of the matrix $\mathcal{M}(k)$ in \eqref{eq:dsqpodsjdjsd} and define further the real polynomials $P^0_R(\omega),P^1_R(\omega),P^0_I(\omega)$ and $P^1_I(\omega)$  as
\begin{equation}
  P^0_R(\omega)+kP^1_R(\omega):=\Re[P(k,j\omega)] \textnormal{ and } P^0_I(\omega)+kP^1_I(\omega):=\Im[P(k,j\omega)].
\end{equation}
If the set
\begin{equation}\label{eq:polycrtic}
\Omega:=\left\{\omega>0:P_R^1(\omega)P^0_I(\omega)-P_R^0(\omega)P_I^1(\omega)=0\right\}
\end{equation}
is nonempty, then the value for $\bar{k}_\infty$ and $\bar\eta_\infty$ are given by
  \begin{equation}\label{eq:bar k}
  \bar{k}_\infty=\inf_{\bar\omega\in\Omega}\left\{\begin{array}{lcl}
    -\dfrac{P_R^0(\bar\omega)}{P_R^1(\bar\omega)}&\textnormal{if}&P_R^1(\bar\omega)\ne0\\
    -\dfrac{P_I^0(\bar\omega)}{P_I^1(\bar\omega)}&\textnormal{if}&P_I^1(\bar\omega)\ne0.
  \end{array}\right.
\end{equation}
  \begin{equation}\label{eq:bar eta}
  \bar{\eta}_\infty=\dfrac{g^2}{\mu}\inf_{\bar\omega\in\Omega}\left\{\begin{array}{lcl}
    -\dfrac{P_R^0(\bar\omega)}{P_R^1(\bar\omega)}&\textnormal{if}&P_R^1(\bar\omega)\ne0\\
    -\dfrac{P_I^0(\bar\omega)}{P_I^1(\bar\omega)}&\textnormal{if}&P_I^1(\bar\omega)\ne0.
  \end{array}\right.
\end{equation}
where $g=-CA^{-1}B$. When the set $\Omega$ is empty, then $\bar{k}_\infty=\infty$ and $\bar{\eta}_\infty=\infty$.
\end{proposition}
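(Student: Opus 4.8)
The plan is to reduce the computation of $\bar k_\infty$ to locating the smallest positive value of $k$ at which $\mathcal{M}(k)$ acquires an eigenvalue on the imaginary axis, and then to translate that condition into the polynomial system \eqref{eq:polycrtic}; the expression for $\bar\eta_\infty$ will follow for free from that for $\bar k_\infty$ via Theorem \ref{eq:mainconstructive2}. The first step is algebraic: by the Schur determinant formula, the characteristic polynomial of the matrix $\mathcal{M}(k)$ in \eqref{eq:dsqpodsjdjsd} is $P(s,k)=\det(sI-\mathcal{M}(k))=sD(s)+kN(s)$, where $D(s)=\det(sI-A)$ and $N(s)$ is the numerator of $H(s)=C(sI-A)^{-1}B=N(s)/D(s)$. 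In particular $P$ is affine in $k$, hence $P(j\omega,k)=\big(P^0_R(\omega)+kP^1_R(\omega)\big)+j\big(P^0_I(\omega)+kP^1_I(\omega)\big)$ with $P^0_R(\omega)=-\omega\,\Im D(j\omega)$, $P^1_R(\omega)=\Re N(j\omega)$, $P^0_I(\omega)=\omega\,\Re D(j\omega)$ and $P^1_I(\omega)=\Im N(j\omega)$, which are genuine real polynomials in $\omega$.

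Second, I would identify $\bar k_\infty$ precisely. The matrix $\mathcal{M}(0)$ has spectrum $\sigma(A)\cup\{0\}$ with a simple eigenvalue at the origin, and a rank-one perturbation argument (the left/right null vectors of $\mathcal{M}(0)$ being $\begin{bmatrix}CA^{-1}&1\end{bmatrix}$ and $\begin{bmatrix}0&1\end{bmatrix}^T$) gives first-order displacement $CA^{-1}B<0$, so $\mathcal{M}(k)$ is Hurwitz for all sufficiently small $k>0$; moreover $\det\mathcal{M}(k)=(CA^{-1}B)\,k\,\det A\neq0$ for $k>0$, so $0$ is never an eigenvalue in the regime of interest. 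By continuity of the spectrum in $k$, the set $\{k>0:\mathcal{M}(k)\text{ Hurwitz}\}$ contains an interval of the form $(0,k^\ast)$, and if $\bar k_\infty<\infty$ no $k$ below the first crossing can leave this set; thus $\bar k_\infty$ equals the smallest $k>0$ for which $\mathcal{M}(k)$ has an eigenvalue on $i\mathbb{R}$, necessarily of the form $\pm j\omega^\ast$ with $\omega^\ast>0$.

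Third, I would characterize all such crossings. For $\omega>0$, one has $j\omega\in\sigma(\mathcal{M}(k))$ iff $P(j\omega,k)=0$, i.e. iff the two real equations $P^0_R(\omega)+kP^1_R(\omega)=0$ and $P^0_I(\omega)+kP^1_I(\omega)=0$ admit a common solution $k$. Eliminating $k$ forces $P^1_R(\omega)P^0_I(\omega)-P^0_R(\omega)P^1_I(\omega)=0$, i.e. $\omega\in\Omega$; conversely, at any $\omega\in\Omega$ with $(P^1_R(\omega),P^1_I(\omega))\neq(0,0)$ the system is consistent and pins down the unique crossing gain $k(\omega)=-P^0_R(\omega)/P^1_R(\omega)=-P^0_I(\omega)/P^1_I(\omega)$ — the two expressions coinciding exactly on $\Omega$, which is why \eqref{eq:bar k} offers them as alternatives. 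A point of $\Omega$ with $(P^1_R,P^1_I)=(0,0)$ either admits no real $k$, hence contributes nothing, or would make $j\omega$ an eigenvalue for every $k$, contradicting Hurwitz-ness for small $k$. Combining with the previous step, $\bar k_\infty=\inf\{k(\omega):\omega\in\Omega,\ k(\omega)>0\}$ (infimum over the empty set being $+\infty$), which is \eqref{eq:bar k}; in particular $\Omega=\emptyset$ yields $\bar k_\infty=\infty$. Finally, \eqref{eq:bar eta} is immediate: by the definition of $\bar\eta_\infty$ in Theorem \ref{eq:mainconstructive2}, $\bar\eta_\infty=(g^2/\mu)\,\bar k_\infty$ with $g=-CA^{-1}B$, so the expression for $\bar\eta_\infty$ is that for $\bar k_\infty$ scaled by the scalar factor $g^2/\mu$.

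The step I expect to be the main obstacle is the second one: turning the ``$\sup$'' defining $\bar k_\infty$ into the ``first imaginary crossing''. This needs a careful continuity-of-eigenvalues argument (stability cannot be lost and regained without the spectrum touching $i\mathbb{R}$), the exclusion of a crossing at $\omega=0$ (handled by $\det\mathcal{M}(k)\neq0$), and the observation that every positive candidate $k(\omega)$, $\omega\in\Omega$, is a genuine crossing gain and therefore bounded below by the first one, so that the infimum in \eqref{eq:bar k} indeed returns $\bar k_\infty$. A secondary item requiring care is the bookkeeping around the degenerate points of $\Omega$ and the sign convention, namely that only the crossing gains lying in $(0,\infty)$ are retained in the infimum.
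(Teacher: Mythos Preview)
Your proposal is correct and follows essentially the same route as the paper: both reduce the computation of $\bar k_\infty$ to finding the pairs $(\omega,k)$ with $P(j\omega,k)=0$, split into real and imaginary parts, and eliminate $k$ to obtain the set $\Omega$; the paper phrases the elimination as the singularity of the $2\times2$ matrix $\begin{bmatrix}P^0_R&P^1_R\\P^0_I&P^1_I\end{bmatrix}$, which is equivalent to your determinant condition. Your version is in fact more thorough than the paper's on precisely the point you flagged as the main obstacle: the paper simply asserts that one takes the smallest resulting $\bar k$ ``to ensure the stability'', whereas you supply the perturbation argument for small $k$, the exclusion of $\omega=0$ via $\det\mathcal{M}(k)\neq0$, and the continuity-of-spectrum reasoning that identifies $\bar k_\infty$ with the first imaginary-axis crossing; you also handle the degenerate points of $\Omega$ explicitly, which the paper does not.
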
}
\begin{proof}
\blue{The matrix $\mathcal{M}(k)$ is Hurwitz stable if and only if the roots of its characteristic polynomial $P(s,k)$ are all located in the open left-half plane. We then look for pairs $(\bar \omega,\bar k)\in\mathbb{R}^2_{>0}$ such that $P(j\bar \omega,\bar k )=0$. By doing so, we look for critical values of $\bar k $ for which we necessarily have a pair of eigenvalues on the imaginary axis. This expression can be rewritten as
  \begin{equation}
    \left[P^0_R(\bar \omega)+\bar kP^1_R(\bar \omega)\right]+j\left[P^0_I(\bar \omega)+\bar kP^1_I(\bar \omega)\right]=0
  \end{equation}
  where  $P^0_R(\omega)+kP^1_R(\omega):=\Re[P(j\omega,k)]$, $P^0_I(\omega)+kP^1_I(\omega):=\Im[P(j\omega,k)]$. If such a pair $(\bar k,\bar \omega)$ exists then we have that $P^0_R(\bar \omega)+\bar kP^1_R(\bar \omega)=P^0_I(\bar \omega)+\bar kP^1_I(\bar \omega)=0$. This can be rewritten as
  \begin{equation}
    \begin{bmatrix}
      P^0_R(\bar \omega) & P^1_R(\bar \omega)\\
      P^0_I(\bar \omega) & P^1_I(\bar \omega)
    \end{bmatrix} \begin{bmatrix}
        1\\
        \bar{k}
    \end{bmatrix}=0.
  \end{equation}
  This is equivalent to say that the vector lies in the kernel of the matrix and a necessary and sufficient condition for that is that the matrix be singular, or, equivalently, that its determinant be equal to zero. This leads to the condition \eqref{eq:polycrtic}. We can then solve for $\bar\omega$ and either use $P^0_R(\bar \omega)+\bar kP^1_R(\bar \omega)=0$ or $P^0_I(\bar \omega)+\bar kP^1_I(\bar \omega)=0$ to find $\bar k$. Since, we may have multiple solutions for the crossing frequencies, we need to choose the smallest $\bar k$ to ensure the stability. The result follows.}
\end{proof}

\subsection{Disturbance rejection/Perfect adaptation}\label{subsec:PA}

Let us analyze the disturbance rejection properties of the antithetic integral controller. \blue{It is expected that this controller rejects constant disturbances on the control input and on the states of the system by virtue of the internal model principle which stipulates that ``\textit{any good regulator must create a model of the dynamic structure of the environment in the closed-loop system}'' \cite{Bengtson:77}. This follows from the fact that the integrator models constant disturbances. Disturbance rejection of the antithetic motif was notably addressed in \cite{Briat:15e} in the stochastic setting with respect to constant disturbance rejection. A more general analysis in the deterministic setting is described in \cite{Olsman:19} using the sensitivity function. We show this using a different approach. To this aim, let us consider the following disturbed system}
\begin{equation}\label{eq:mainsystLdist}
  \begin{array}{lcl}
    \dot{x}(t)&=&Ax(t)+Bu(t)+Ed,x(0)=x_0\\
    y(t)&=&Cx(t)\\
  \end{array}
\end{equation}
where the disturbance vector $d\in\mathbb{R}_{\ge0}$ has been added and where $E\in\mathbb{R}^n_{\ge0}$.

Assuming that the system is internally positive and that it satisfies the conditions of Assumption \ref{hyp:1}, then for any constant $u\ge0$ and $d\ge0$, we have that $y=-CA^{-1}(Bu+Ed)$ at equilibrium. Since $CA^{-1}\le 0$ and $B,E\ge0$, then we have that $y\ge -CA^{-1}Ed$. Therefore, if $d\ge0$ is such that $-CA^{-1}Ed=\mu+\epsilon$, $\epsilon>0$, then output tracking is not achievable. Indeed, for the reference to be reached by the output, we would need the control input to be equal to
\begin{equation}
  u=\dfrac{\mu+CA^{-1}B}{-CA^{-1}E}=\dfrac{-\epsilon}{-CA^{-1}E}<0
\end{equation}
which would violate the nonnegativity of the control input. This leads us to define the following set of admissible disturbance values
\begin{equation}\label{eq:calD}
  \mathcal{D}_\mu:=\left\{d\in\mathbb{R}_{\ge0}:\ \mu+CA^{-1}Ed>0\right\}.
\end{equation}
In particular when $-CA^{-1}E=0$, then the disturbance can be arbitrarily large.

\begin{proposition}
  Let $\mu>0$ be given and assume that $d\in\mathcal{D}_\mu$, then the controlled disturbed system \eqref{eq:mainsystLdist}-\eqref{eq:mainsystK2b} has the following equilibrium point $(x^*,z_1^*,z_2^*)$:
  \begin{equation}
\left(A^{-1}\left(\dfrac{B(\mu+CA^{-1}Ed)}{CA^{-1}B}-Ed\right),-\dfrac{\mu+CA^{-1}Ed}{CA^{-1}Bk},\dfrac{\mu}{\eta k z_1^*}\right).
  \end{equation}
 \end{proposition}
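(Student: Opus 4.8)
The plan is to carry out a direct steady-state computation, exploiting the essentially triangular structure of the equations. Setting $\dot z_1=\dot z_2=0$ in the controller \eqref{eq:mainsystK2b} immediately gives $\mu=k\eta z_1^*z_2^*$ and $y^*=k\eta z_1^*z_2^*$, hence $y^*=\mu$; this is already the perfect adaptation property and it holds irrespective of the disturbance $d$. Next, substituting $u^*=kz_1^*$ and setting $\dot x=0$ in \eqref{eq:mainsystLdist} yields $0=Ax^*+Bkz_1^*+Ed$, so, since $A$ is invertible by Assumption \ref{hyp:1}, $x^*=-A^{-1}(Bkz_1^*+Ed)$. Imposing the output constraint $Cx^*=\mu$ then produces the scalar linear equation $-CA^{-1}Bk\,z_1^*-CA^{-1}Ed=\mu$; because $CA^{-1}B\ne0$ (again Assumption \ref{hyp:1}), this has the unique solution $z_1^*=-(\mu+CA^{-1}Ed)/(CA^{-1}Bk)$, which is exactly the claimed value. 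Back-substituting this expression into $x^*=-A^{-1}(Bkz_1^*+Ed)$ and simplifying gives the stated $x^*$, and finally solving $\mu=k\eta z_1^*z_2^*$ for $z_2^*$ gives $z_2^*=\mu/(\eta k z_1^*)$.

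It then remains to check admissibility and uniqueness. Since $d\in\mathcal{D}_\mu$, we have $\mu+CA^{-1}Ed>0$ by the definition \eqref{eq:calD}, and by Proposition \ref{prop:dsdsds} we have $CA^{-1}B<0$; hence $z_1^*>0$, so that $z_2^*$ is well defined and positive as well. One may additionally verify, using $-A^{-1}\ge0$ (Metzler Hurwitz $A$), $B,E\ge0$, $d\ge0$, and the negativity of the scalar coefficient $(\mu+CA^{-1}Ed)/(CA^{-1}B)$, that $x^*\ge0$, so the equilibrium indeed lies in the feasible orthant of the internally positive system. Uniqueness is built into the derivation itself: the controller equations force $y^*=\mu$, the output constraint then determines $z_1^*$ uniquely, $x^*$ is determined from $z_1^*$, and $z_2^*$ is determined from $z_1^*$, so no other equilibrium can exist.

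There is no genuine obstacle here: the statement is a routine algebraic consequence of the structure of the antithetic integral controller, and the only place the hypotheses are actually used is in guaranteeing the well-definedness and positivity of the equilibrium — which is precisely why the admissible-disturbance set $\mathcal{D}_\mu$ was introduced. The whole computation mirrors the one behind the undisturbed equilibrium \eqref{eq:point}, with $\mu$ replaced by the effective reference $\mu+CA^{-1}Ed$ wherever it enters the state and $z_1$ equations, while $z_2^*$ continues to absorb the leftover degree of freedom through $\mu=k\eta z_1^*z_2^*$.
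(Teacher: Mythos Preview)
Your proof is correct. The paper states this proposition without proof, leaving it as a routine steady-state computation, and your direct algebraic derivation is exactly the intended verification; the additional checks on positivity and uniqueness are a nice bonus beyond what the paper records.
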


\begin{proposition}
  Let $\mu>0$ be given. For any $E$ of appropriate dimensions, the controlled disturbed system \eqref{eq:mainsystLdist}-\eqref{eq:mainsystK2b} rejects constant disturbances provided that $d\in \mathcal{D}_\mu$ and the corresponding equilibrium point is locally exponentially stable.
\end{proposition}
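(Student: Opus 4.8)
The plan is to establish the two claims of the proposition — asymptotic disturbance rejection (perfect adaptation of $y$ to $\mu$) and local exponential stability of the perturbed equilibrium — by reducing both to facts already proved in the undisturbed case. First I would observe that, since $d$ is a constant vector, the disturbed closed-loop system \eqref{eq:mainsystLdist}-\eqref{eq:mainsystK2b} is still an autonomous system and, by the preceding proposition, it admits the unique equilibrium $(x^*,z_1^*,z_2^*)$ with $x^*=A^{-1}(B(\mu+CA^{-1}Ed)/(CA^{-1}B)-Ed)$. The key point for disturbance rejection is that at this equilibrium $\dot z_1=\mu-k\eta z_1^*z_2^*=0$ forces $k\eta z_1^*z_2^*=\mu$, and then $\dot z_2=y^*-k\eta z_1^*z_2^*=0$ forces $y^*=\mu$; equivalently $y^*=Cx^*=C A^{-1}B(\mu+CA^{-1}Ed)/(CA^{-1}B)-CA^{-1}Ed=\mu$. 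So whenever the equilibrium is reached the output equals the reference exactly, which is the internal-model mechanism: the integral action of the antithetic loop pins $y^*$ at $\mu$ independently of $E$ and $d$, the only constraint being that $z_1^*>0$ and $z_2^*>0$, which is precisely the requirement $\mu+CA^{-1}Ed>0$, i.e. $d\in\mathcal{D}_\mu$. (One also uses $d\ge0$, $CA^{-1}\le0$, $B,E\ge0$ together with $d\in\mathcal D_\mu$ to check $x^*$ is well defined and the controller states are strictly positive.)

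Next I would turn to local exponential stability. The plan is to compute the Jacobian of the disturbed closed-loop system at the perturbed equilibrium and show it coincides — up to harmless relabelling — with the Jacobian $\tilde A(k)$ of the undisturbed system at $\mathcal X^*_{\mu,\eta,k}$, because the constant term $Ed$ drops out upon linearization. Concretely, writing $\tilde x=x-x^*$, $\tilde z_i=z_i-z_i^*$, the disturbed dynamics linearize to
\begin{equation}
\begin{bmatrix}\dot{\tilde x}\\ \dot{\tilde z}_1\\ \dot{\tilde z}_2\end{bmatrix}
=\begin{bmatrix} A & -k\eta z_2^* B' & 0\\ 0 & -k\eta z_2^* & -k\eta z_1^*\\ C & -k\eta z_2^* & -k\eta z_1^*\end{bmatrix}
\begin{bmatrix}\tilde x\\ \tilde z_1\\ \tilde z_2\end{bmatrix},
\end{equation}
where $B'$ should read $B$ and I have used $u=kz_1$, $\dot x=Ax+Bkz_1+Ed$ so that $\partial(\dot x)/\partial x=A$ regardless of $d$. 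The only $d$-dependence enters through $z_1^*,z_2^*$, and using $k\eta z_1^*z_2^*=\mu$ together with $z_2^*=-CA^{-1}B/\eta$ (which still holds: $\dot z_1-\dot z_2=\mu-y$, and at equilibrium $y=\mu$ gives the same relation as in the undisturbed case after noting $z_1^*z_2^*$ is fixed) one rewrites $-k\eta z_2^*=CA^{-1}Bk$ and $-k\eta z_1^*=-\mu/(z_2^*)=\eta\mu/(CA^{-1}B)\cdot(-1)$… — after this bookkeeping the Jacobian is exactly $\tilde A(k)$ of \eqref{eq:linear}, whose local exponential stability under the stated parameter choices is guaranteed by Theorem \ref{th:glabglab} / the constructive results of Section \ref{sec:linan2}. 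Hence the perturbed equilibrium inherits local exponential stability.

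The main obstacle I anticipate is the algebraic verification that the linearized matrix at the \emph{perturbed} equilibrium is genuinely independent of $d$: one must confirm that $z_2^*$ retains the value $-CA^{-1}B/\eta$ and that the product $k\eta z_1^*z_2^*$ remains $\mu$, which requires carefully re-deriving the equilibrium relations in the disturbed system rather than quoting the undisturbed ones. Once that invariance is established the rest is immediate. A secondary subtlety is that Theorem \ref{th:glabglab} is stated for the undisturbed closed loop, so I would phrase the conclusion as: the Jacobian being identical, any parameter triple $(\mu,\eta,k)$ for which $\mathcal X^*_{\mu,\eta,k}$ is locally exponentially stable in the undisturbed problem yields a locally exponentially stable perturbed equilibrium, and in particular such triples exist by Theorem \ref{th:glabglab}. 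I would also remark that the positivity constraint $d\in\mathcal D_\mu$ is exactly what keeps the equilibrium in the physically meaningful (strictly positive) region where the linearization is valid, tying the two parts of the proposition together.
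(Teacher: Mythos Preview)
You have misread the proposition: local exponential stability of the equilibrium is a \emph{hypothesis}, not a conclusion. The statement asserts disturbance rejection \emph{provided that} $d\in\mathcal D_\mu$ \emph{and} the equilibrium is locally exponentially stable. Accordingly, the paper's proof only establishes the rejection part, and it does so by a transmission-zero argument: it forms the augmented matrix encoding the DC-gain of the map $d\mapsto y$ around the linearization and observes that the output row $[\,C\ 0\ 0\,]$ is the difference of the two controller rows, so the matrix is singular and the DC-gain vanishes.

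Your route to disturbance rejection --- reading off $k\eta z_1^*z_2^*=\mu$ from $\dot z_1=0$ and then $y^*=\mu$ from $\dot z_2=0$, or equivalently computing $Cx^*=\mu$ directly --- is correct and is simply a more elementary, equilibrium-level version of the same internal-model observation. Combined with the assumed local exponential stability it yields local rejection of constant disturbances, so the substantive part of your proposal is fine.

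Your stability argument, however, contains a genuine error that you should be aware of even though the point is moot here. The Jacobian at the perturbed equilibrium is \emph{not} independent of $d$. From the preceding proposition one has $z_1^*=-(\mu+CA^{-1}Ed)/(CA^{-1}Bk)$ and $z_2^*=\mu/(\eta k z_1^*)=-\mu\,CA^{-1}B/\bigl(\eta(\mu+CA^{-1}Ed)\bigr)$, which equals $-CA^{-1}B/\eta$ only when $CA^{-1}Ed=0$. Your heuristic (``$\dot z_1-\dot z_2=\mu-y$ at equilibrium gives the same relation'') pins down only the product $z_1^*z_2^*=\mu/(k\eta)$, not $z_2^*$ individually. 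Hence the entries $-k\eta z_2^*$ and $-k\eta z_1^*$ of the linearized matrix genuinely depend on $d$, and you cannot simply identify the perturbed Jacobian with $\tilde A(k)$ from \eqref{eq:linear} and quote Theorem~\ref{th:glabglab}.
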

\begin{proof}
  The controlled disturbed system \eqref{eq:mainsystLdist}-\eqref{eq:mainsystK2b} locally rejects constant disturbances if and only if the matrix
  \begin{equation}
    \begin{bmatrix}
      A & Bk & 0 & \vline & E\\
      0 & -k\eta z_1^* & -k\eta z_2^* & \vline & 0\\
      C & -k\eta z_1^* & -k\eta z_2^* & \vline & 0\\
      \hline
      C & 0 & 0 & \vline & 0
    \end{bmatrix}
  \end{equation}
  is singular. This is equivalent to saying that the DC-gain of the transfer $d\mapsto y$ is zero \blue{or that the (local) transfer function of the closed-loop system has a transmission zero at the zero frequency}.  It is immediate to see that this is the case since the last row is a linear combination of the two previous ones. Note that this would not be the case if the disturbance were acting on the dynamics of the controller or on the output.
\end{proof}

\subsection{Example: gene expression}\label{subsec:ex}

We exemplify here the results of the section on the following gene expression model
\begin{equation}\label{eq:geneexpression}
  \begin{array}{lcl}
    \dot{x}_1(t)&=&-\gamma_1x_1(t)+u(t)\\
    \dot{x}_2(t)&=&k_2x_1(t)-\gamma_2x_2(t)
  \end{array}
\end{equation}
where $x_1,x_2$ and $u$ are the average populations of mRNA, protein and the transcription rate (control input). As in \cite{Briat:15e,Briat:16a,Olsman:19,Olsman:19b}, the idea is to control the system using a positive integral controller. The control input being driven positively to the state $x_1$, the control input can be chosen to be the positive component of the controller and we have
\begin{equation}\label{eq:geneK}
  \begin{array}{lcl}
    \dot{z}_1(t)&=&\mu-\eta kz_1(t)z_2(t)\\
    \dot{z}_2(t)&=&x_2(t)-\eta kz_1(t)z_2(t)\\
    u(t)&=&kz_1(t)
  \end{array}
\end{equation}
where it can be seen that the goal is to have the average number of proteins to track the reference value $\mu>0$. We have the following matrices
\begin{equation}
  A=\begin{bmatrix}
    -\gamma_1 & 0\\
    k_2 & -\gamma_2
  \end{bmatrix},B=\begin{bmatrix}
    1\\0
  \end{bmatrix}\ \textnormal{and }C=\begin{bmatrix}
    0 & 1
  \end{bmatrix}.
\end{equation}
Using Theorem \ref{eq:mainconstructive}, we find that
\begin{equation*}
    \bar{k}_\infty=\dfrac{\gamma_1\gamma_2(\gamma_1+\gamma_2)}{k_2}
\end{equation*}
which means that the unique equilibrium point is locally exponentially stable for all $k\in(0,\bar k_\infty)$, $\mu,\eta>0$. We now use Theorem \ref{eq:mainconstructive2} to find that
\begin{equation}
  \bar\eta_\infty = \dfrac{k_2(\gamma_1+\gamma_2)}{\bar\mu\gamma_1\gamma_2}
\end{equation}
which means that the unique equilibrium point is locally exponentially stable for all $\mu\in(0,\bar\mu)$,  $\eta\in(0,\bar\eta_\infty)$, $k>0$.

For completeness, it seems interesting to obtain the same result using Proposition \ref{prop:k_bar}. We have that
\begin{equation}
  P(s,k)=sD(s)+kN(s)
\end{equation}
where $D(s)=(s+\gamma_1)(s+\gamma_2)$ and $N(s)=k_2$. Substituting $(s,k)$ by $(j\bar\omega,\bar k)$ yields
\begin{equation}
   P(j\bar\omega,\bar k)=-\bar\omega^2(\gamma_1+\gamma_2)+kk_2+j\omega(\gamma_1\gamma_2-\omega^2)=0,
\end{equation}
and, hence,
\begin{equation}
  P_R^0(\bar\omega)=-\bar\omega^2(\gamma_1+\gamma_2),\ P_R^1(\bar\omega)=k_2,\ P_I^0(\bar\omega)=\omega(\gamma_1\gamma_2-\omega^2) \textnormal{ and }P_I^1(\bar\omega)=0.
\end{equation}

We then obtain that
\begin{equation}
  P_R^1(\bar\omega)P^0_I(\bar\omega)-P_R^0(\bar\omega)P_I^1(\bar\omega)=k_2\bar\omega(\gamma_1\gamma_2-\bar\omega^2).
\end{equation}
The only positive root to this equation is given by $\bar\omega=(\gamma_1\gamma_2)^{1/2}$ and we get that the corresponding value for $\bar k$ is given by
\begin{equation}
  \bar k=-\dfrac{P_R^0(\bar\omega)}{P_R^1(\bar\omega)}=-\dfrac{-\bar\omega^2(\gamma_1+\gamma_2)}{k_2}=\dfrac{\gamma_1\gamma_2(\gamma_1+\gamma_2)}{k_2}.
\end{equation}

For numerical purposes, let us consider the parameters $\gamma_1=1$ h$^{-1}$, $\gamma_2=1$ h$^{-1}$, $k_2=1$ h$^{-1}$. In such a case, we must choose  $k<\bar k_\infty=2$. Picking then $k=1/3$ h$^{-1}$, $k\eta=10$ nM$^{-1}$ h$^{-1}$ and $\mu=1$ nM h$^{-1}$, we obtain the simulation results depicted in Figure \ref{fig:signed:genexp} and Figure \ref{fig:signed:genexpeta}. We can observe the convergence of the output to the reference and that a larger $k\eta$ improves the transient performance. In fact, it seems that when $k\eta$ is large enough, the output trajectories converge to the trajectory that would be obtained using a standard integral control law; see Figure~\ref{fig:signed:stdint}. The bifurcation curve in the $(k,\eta)$-plane is depicted in Figure \ref{fig:signed:bif} where the stable region is located below the curve. The vertical line corresponds to the value $\bar k_\infty$ and we can clearly observe that it is an vertical asymptote for the bifurcation curve illustrating that for $k\in(0,\bar k_\infty)$ the closed-loop system is locally exponentially stable regardless the value of $\eta>0$. \blue{Conversely, the horizontal asymptote indicates the value for $\bar\eta_\infty$ which is equal to 2 here. we can see that when $\eta$ is smaller than this value, the closed-loop system is stable for all gains $k>0$. Figure \ref{fig:signed:bif2} depicts the bifurcation surface in the $(k,\eta k)$-plane with corresponds to the bifurcation curve for the slightly differently parameterized controller in \cite{Briat:15e}. This demonstrates the validity of the results for this controller. Figure \ref{fig:signed:RL1} depicts the root locus in the case where $k=1$ as $\eta$ sweeps from 0 to $\infty$. We can clearly see that the system remains stable as the root locus remains confined in the open left half-plane. This is a consequence of the fact that $\mathcal{M}(1)$ is Hurwitz stable. Note also the presence of the asymptote point towards $-\infty$ for the root escaping to infinity. On the other hand,  Figure \ref{fig:signed:RL2} depicts the case where $k=2.5$ showing that the system becomes unstable if $\eta$ is too large because the matrix $\mathcal{M}(2.5)$ is not Hurwitz stable.}
\begin{figure}[!]
  \centering
  \includegraphics[width=0.70\textwidth]{./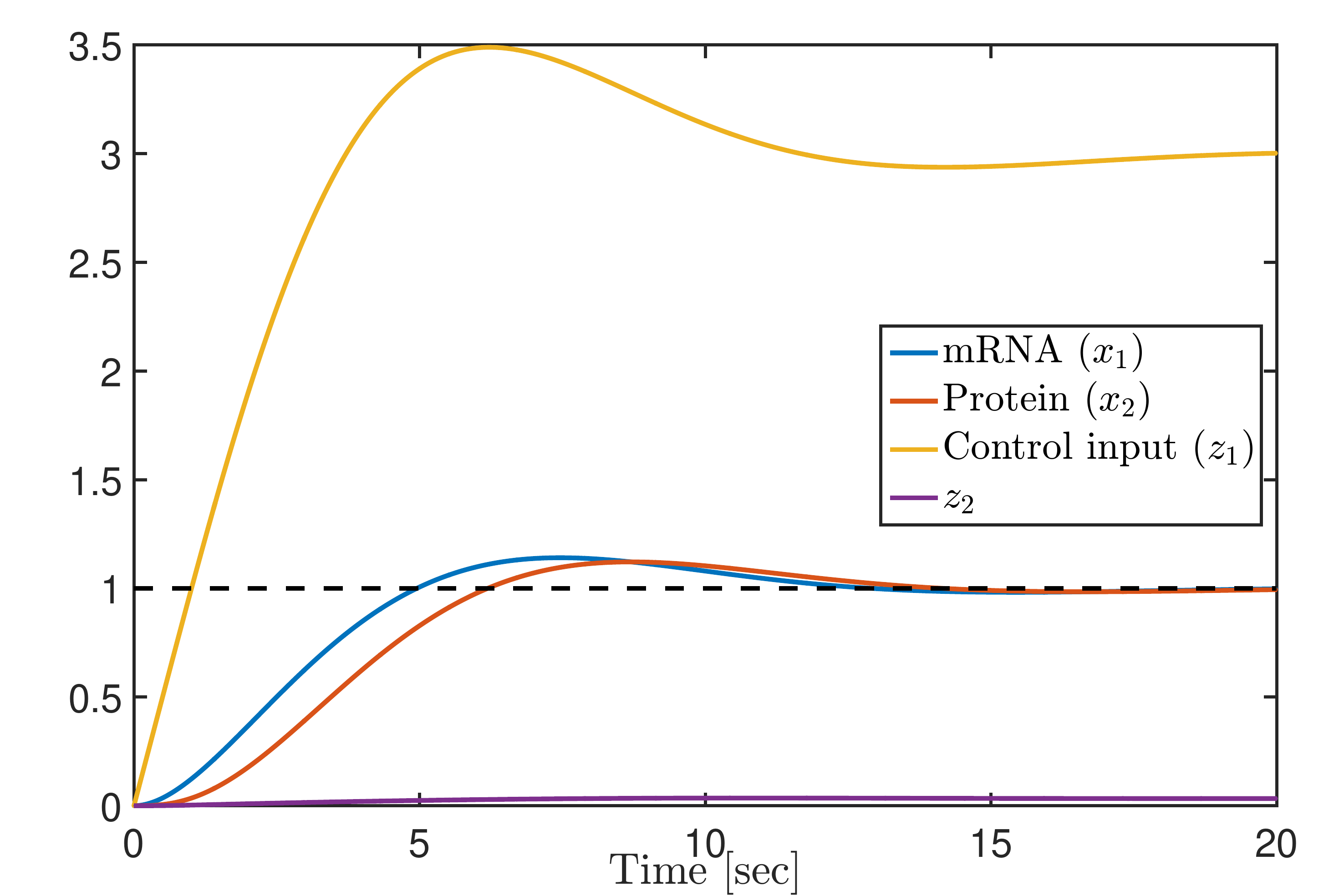}
  \caption{Controlled gene expression network with $k=1/3$, $k\eta=10$ and $\mu=1$.}\label{fig:signed:genexp}
\end{figure}

\begin{figure}[!]
  \centering
  \includegraphics[width=0.7\textwidth]{./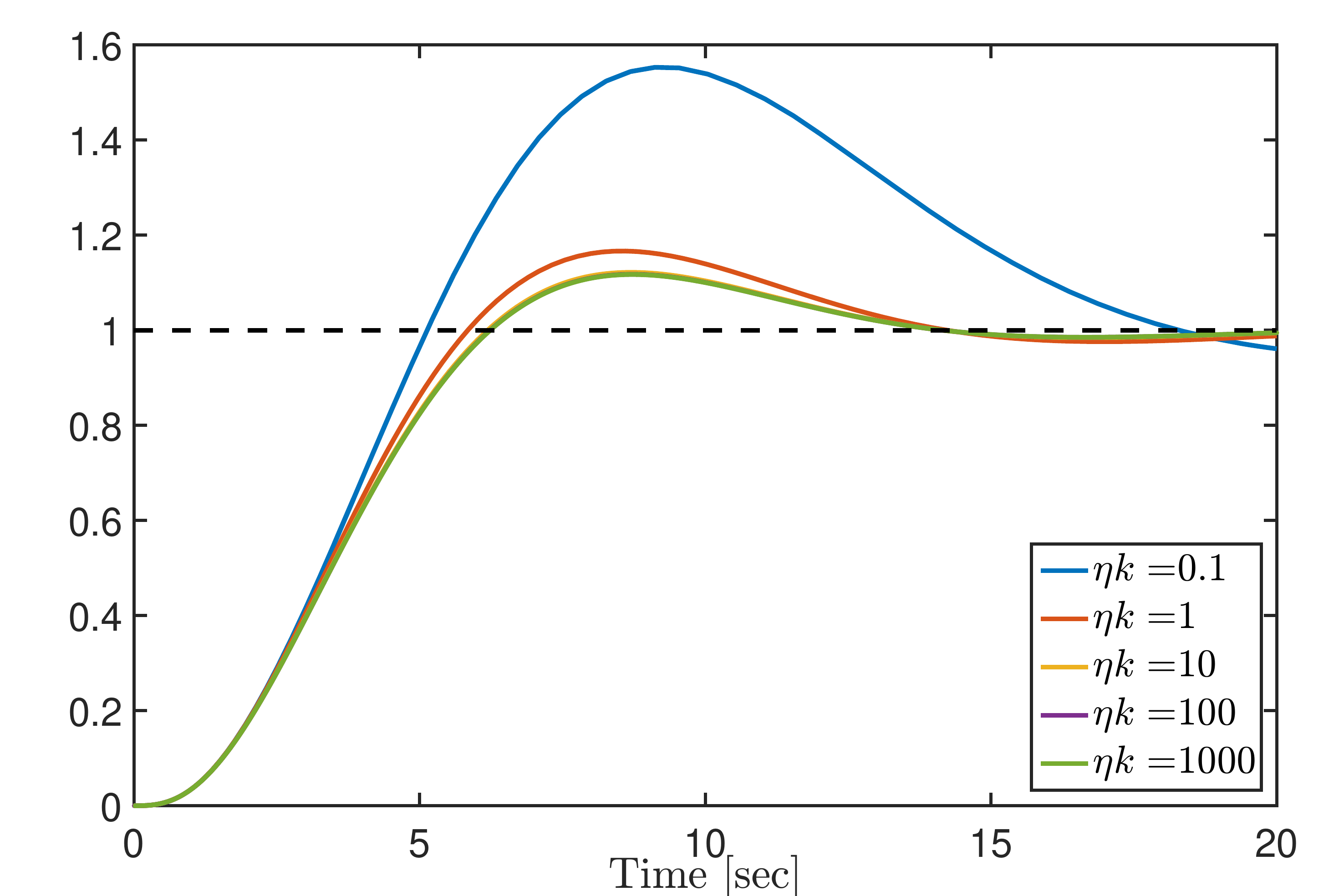}
  \caption{Evolution of the protein concentration with $k=1/3$, $\mu=1$ and different values for $\eta k$.}\label{fig:signed:genexpeta}
\end{figure}

\begin{figure}[!]
  \centering
  \includegraphics[width=0.7\textwidth]{./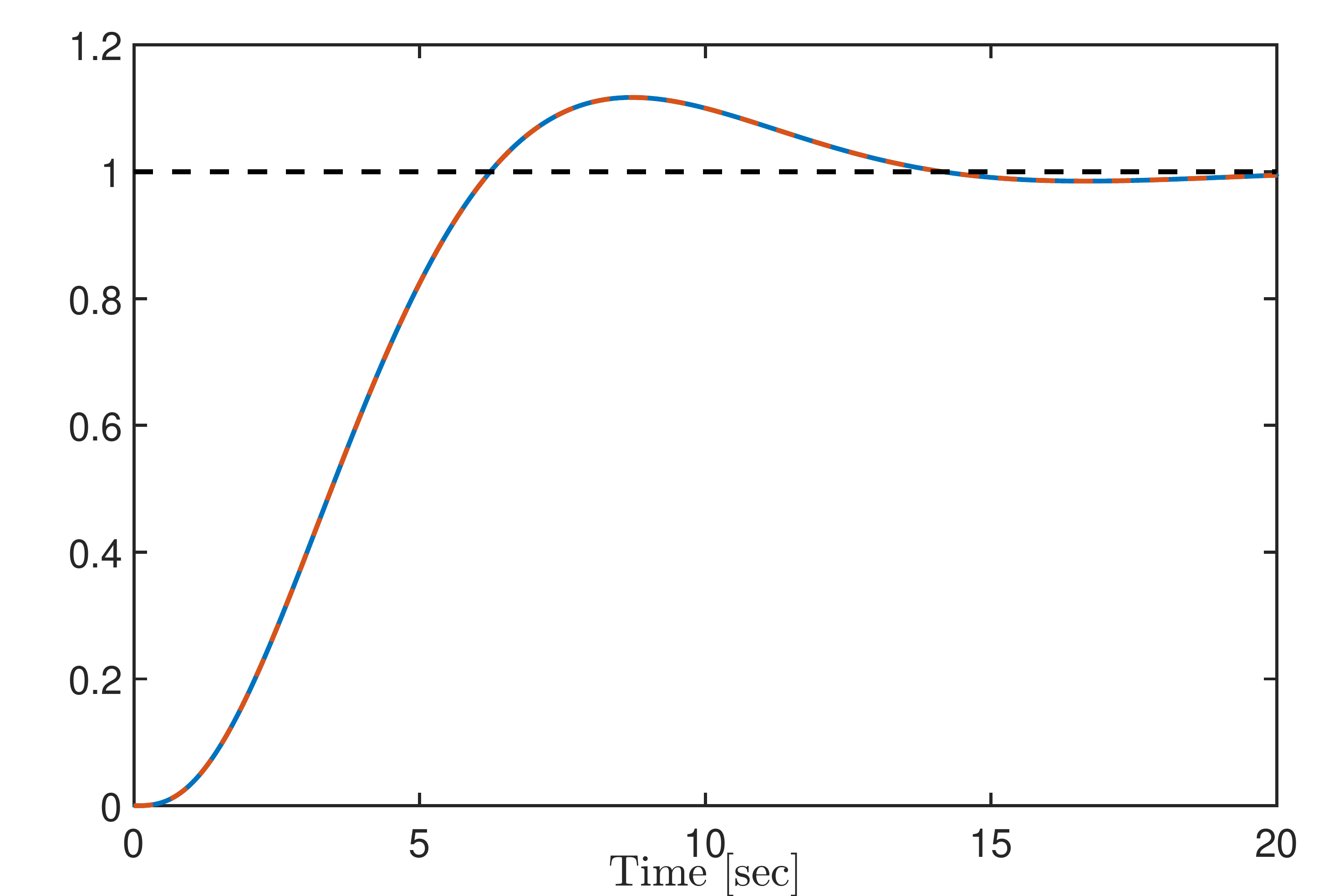}
  \caption{Comparison of the output trajectory for the system controlled with an antithetic integral control with $k\eta=1000$ (blue) and a standard integral control (red). The trajectories are so close that we cannot distinguish them.}\label{fig:signed:stdint}
\end{figure}

\begin{figure}[!]
  \centering
  \includegraphics[width=0.7\textwidth]{./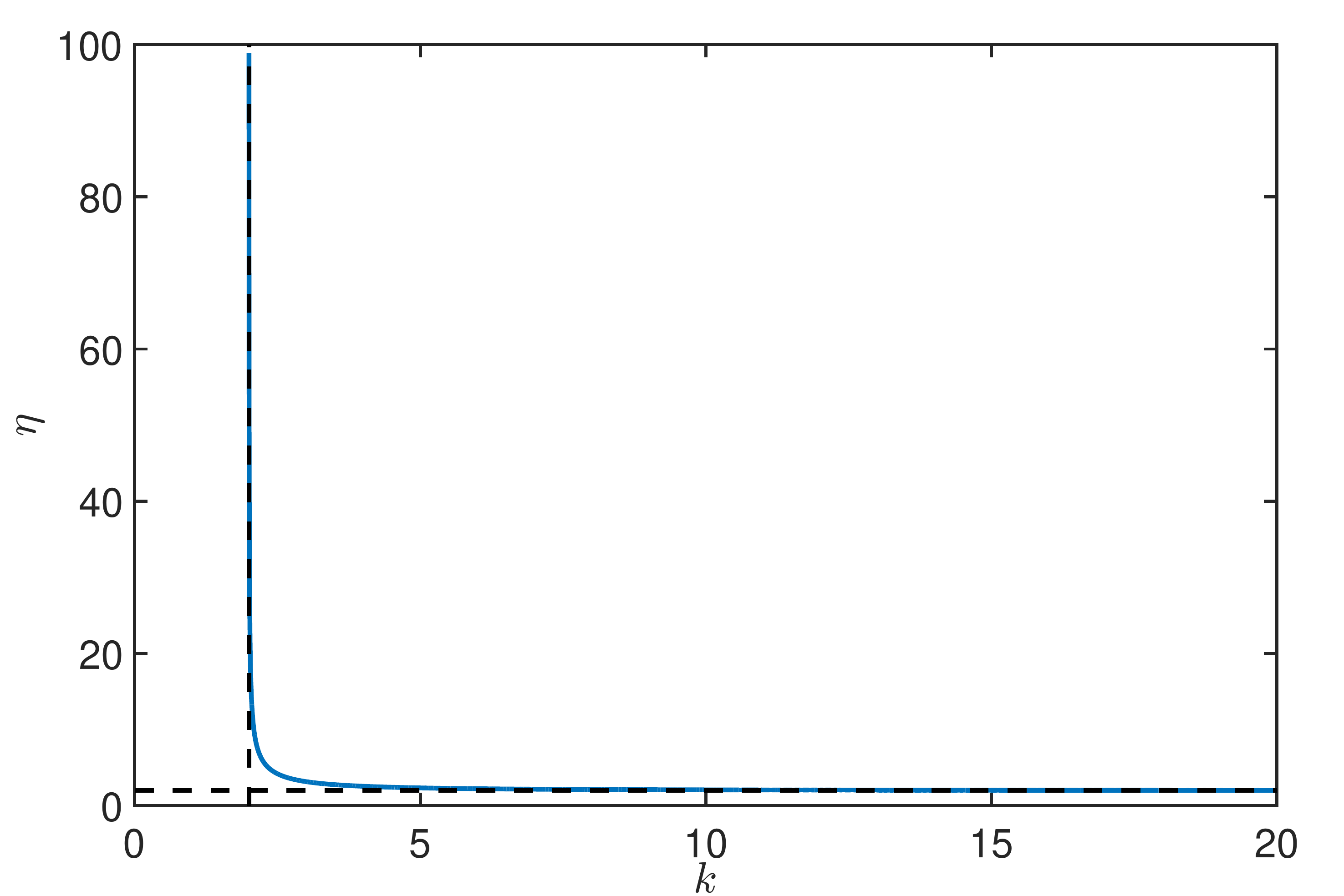}
  \caption{Bifurcation curve in the $(k,\eta)$ plane for the gene expression network with $\gamma_m=1$, $\gamma_p=1$, $k_p=1$ and $\mu=1$. The vertical line corresponds to the value $\bar k_\infty$ whereas the horizontal one corresponds to $\bar\eta_\infty$.}\label{fig:signed:bif}
\end{figure}

\begin{figure}[!]
  \centering
  \includegraphics[width=0.7\textwidth]{./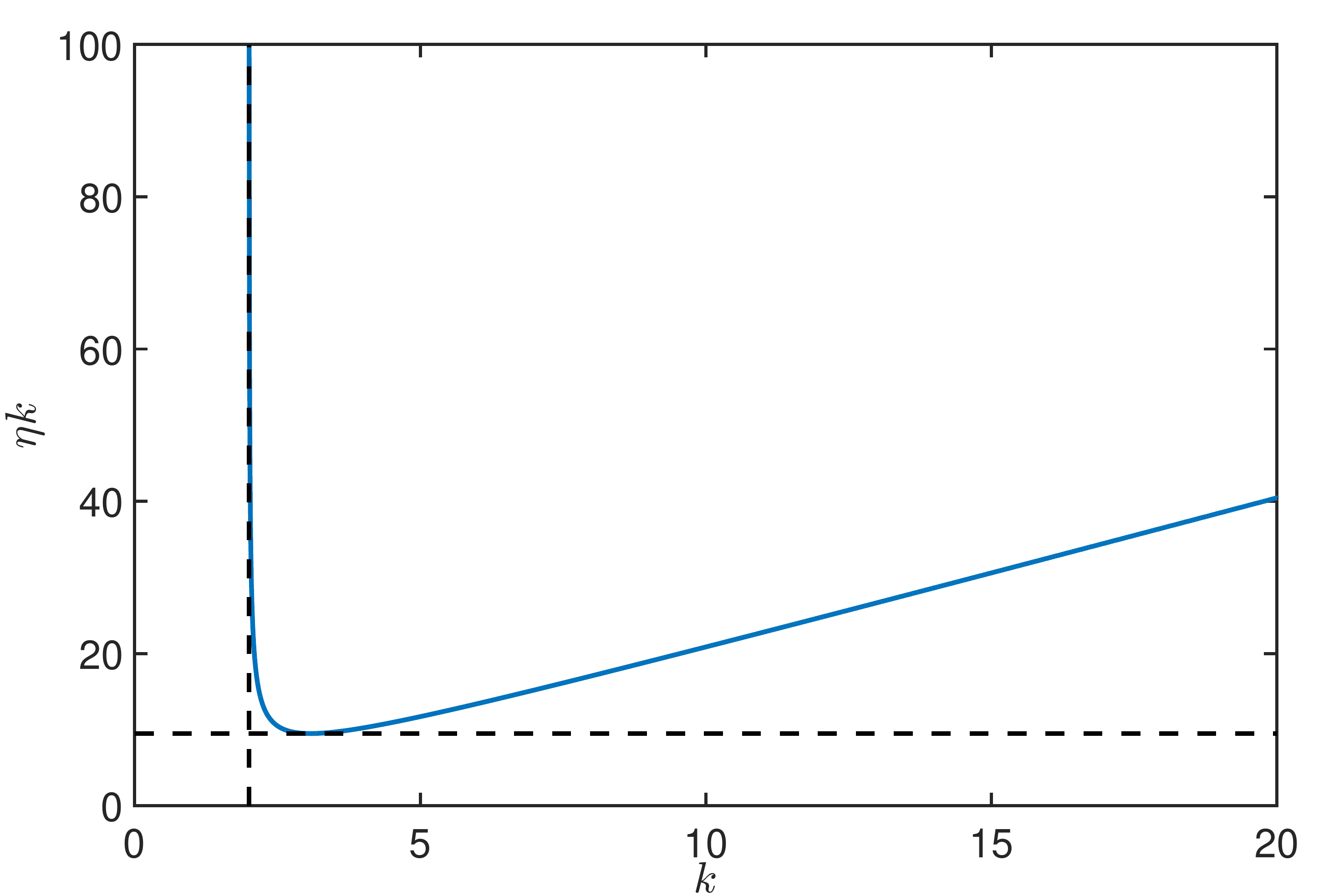}
  \caption{Bifurcation curve in the $(k,k\eta)$ plane for the gene expression network with $\gamma_m=1$, $\gamma_p=1$, $k_p=1$ and $\mu=1$. The vertical line corresponds to the value $\bar k$ whereas the horizontal one corresponds to the maximum value for $\eta k$ for which  the system is stable for all $k>0$. The value is approximately equal to 9.4815 in the current scenario.}\label{fig:signed:bif2}
\end{figure}

\begin{figure}[!]
  \centering
  \includegraphics[width=0.7\textwidth]{./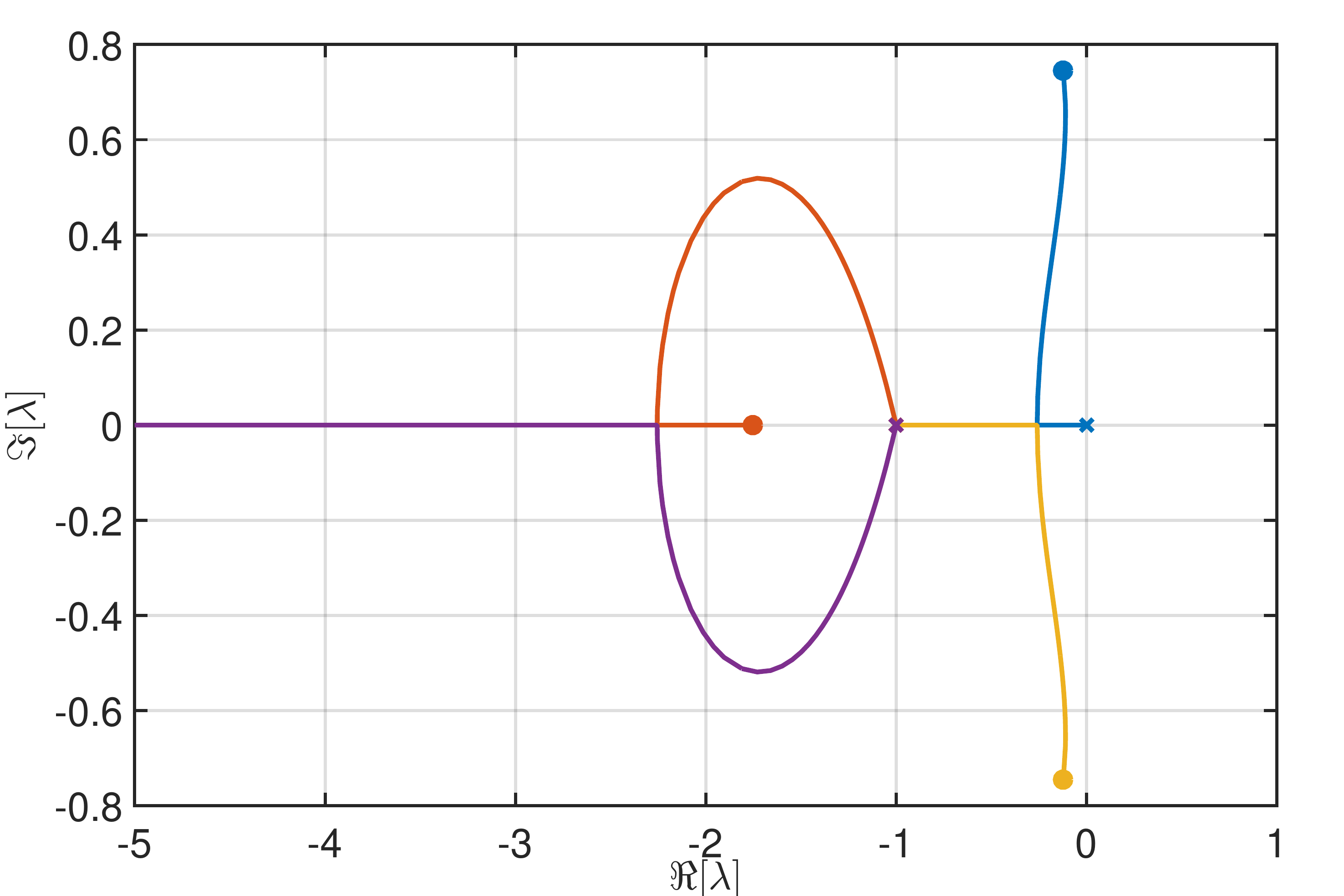}
  \blue{\caption{Root locus of the closed-loop network as $\eta$ increases when $k=1$. The root locus starts from the zeros denoted by crosses and end up at the poles indicated by circles which coincide with the eigenvalues of the matrix $\mathcal{M}(1)$. We can clearly see the asymptote going to $-\infty$ and that the root locus stays in the open left half-plane.}}\label{fig:signed:RL1}
\end{figure}

\begin{figure}[!]
  \centering
  \includegraphics[width=0.7\textwidth]{./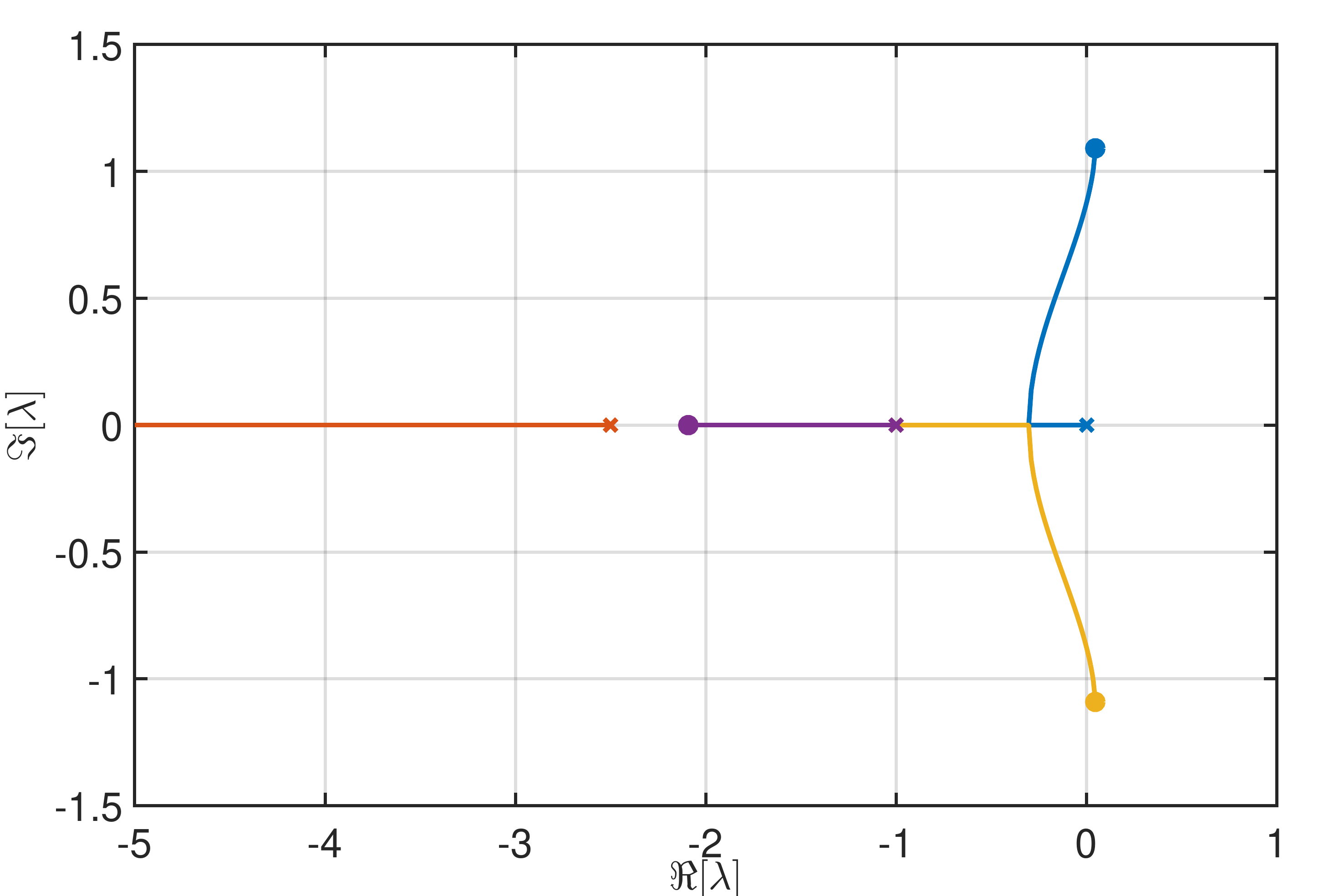}
\blue{\caption{Root locus of the closed-loop network as $\eta$ increases when $k=2.5$. The root locus starts from the zeros denoted by crosses and end up at the poles indicated by circles which coincide with the eigenvalues of the matrix $\mathcal{M}(2.5)$. We can clearly see the asymptote going to $-\infty$ but the root locus does not stay confined in the open left half-plane because the matrix $\mathcal{M}(2.5)$ is not Hurwitz stable.}}\label{fig:signed:RL2}
\end{figure}

\section{Antithetic integral control of nonlinear internally positive systems}\label{sec:AIC_nonlinear}

\blue{This section aims at demonstrating that the theory developed in the previous section for linear positive systems also applies to some classes of nonlinear positive systems. Nonlinear positive systems are briefly recalled in Section \ref{sec:nlp}. The main stability results are presented in Section \ref{sec:nlpstab} and are illustrated in Section \ref{sec:nlpex} through several examples taken from biochemistry and epidemiology.}

\subsection{Nonlinear positive systems}\label{sec:nlp}

Let us consider here the following nonlinear system
\begin{equation}\label{eq:NL}
\begin{array}{rcl}
  \dot{x}(t)&=&f(x(t),u(t)),\ x(0)=x_0\\
  y(t)&=&h(x(t))
\end{array}
\end{equation}
where $x,x_0\in\mathbb{R}^n$, $u\in\mathbb{R}$ and $y\in\mathbb{R}$ are the state of the process, the initial condition, the control input and the measured output, respectively.

\begin{proposition}\label{prop:posNL}
The following statements are equivalent:
\begin{enumerate}[label=({\alph*})]
  \item The system \eqref{eq:NL} is internally positive; i.e. for any $x_0\ge0$ and $u(t)\ge0$, for all $t\ge0$, we have that $x(t)\ge0$ and $y(t)\ge0$, for all $t\ge0$.
  \item The following conditions hold:
  \begin{itemize}
    \item   $f_i(x,u)\ge0$ whenever $x_i=0$ and $x_j,u\ge0$, $j=1,\ldots,N$, $j\ne i$ and for all $i=1,\ldots,n$;
\item   $h(x)\ge0$ for all $x\ge0$.
  \end{itemize}
\end{enumerate}
\end{proposition}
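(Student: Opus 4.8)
The plan is to establish the two implications separately. The implication $(a)\Rightarrow(b)$ is an elementary boundary argument, while $(b)\Rightarrow(a)$ is the substantive direction and rests on a classical sub-tangentiality (Nagumo-type) argument combined with an $\varepsilon$-regularization.

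\emph{Proof of $(a)\Rightarrow(b)$.} For the output condition, fix any $\bar x\ge0$, take $x_0=\bar x$ and any admissible nonnegative input; internal positivity forces $y(0)=h(x_0)=h(\bar x)\ge0$, and since $\bar x\ge0$ was arbitrary this gives $h(x)\ge0$ on $\mathbb{R}^n_{\ge0}$. For the vector-field condition, fix $i\in\{1,\dots,n\}$, a point $\bar x\ge0$ with $\bar x_i=0$, and a constant $\bar u\ge0$. Running the system from $x_0=\bar x$ with $u\equiv\bar u$, internal positivity gives $x_i(t)\ge0$ for all $t\ge0$; since $x_i(0)=0$ this implies $\dot x_i(0^+)=f_i(\bar x,\bar u)\ge0$, which is exactly the required condition.

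\emph{Proof of $(b)\Rightarrow(a)$.} Fix $x_0\ge0$ and a nonnegative input $u(\cdot)$. For $\varepsilon>0$ consider the perturbed dynamics $\dot x^\varepsilon=f(x^\varepsilon,u)+\varepsilon\mathbbm{1}$ with $x^\varepsilon(0)=x_0+\varepsilon\mathbbm{1}>0$, where $\mathbbm{1}=(1,\dots,1)^T$. I claim $x^\varepsilon(t)>0$ on its whole interval of existence. If not, let $t^\star$ be the first time at which some coordinate vanishes, say $x_i^\varepsilon(t^\star)=0$; by minimality of $t^\star$ we still have $x_j^\varepsilon(t^\star)\ge0$ for all $j$ and $u(t^\star)\ge0$, so the condition in (b) yields $\dot x_i^\varepsilon(t^\star)=f_i(x^\varepsilon(t^\star),u(t^\star))+\varepsilon\ge\varepsilon>0$; but $x_i^\varepsilon$ is positive on $[0,t^\star)$ and vanishes at $t^\star$, forcing $\dot x_i^\varepsilon(t^\star)\le0$, a contradiction. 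Hence $x^\varepsilon(t)\ge0$. Letting $\varepsilon\to0$ and invoking continuous dependence of solutions on the initial condition and on the parameter, we obtain $x(t)\ge0$ for all $t\ge0$; the output condition $h(x)\ge0$ on $\mathbb{R}^n_{\ge0}$ then gives $y(t)=h(x(t))\ge0$.

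The main obstacle is the limiting step $\varepsilon\to0$: one must ensure the perturbed solutions $x^\varepsilon$ are defined on a time interval not shrinking to zero and that they converge uniformly on compacts to $x$, which requires the appropriate regularity of $f$ (local Lipschitz in $x$, continuity in $u$) and a short argument ruling out finite-time blow-up of $x^\varepsilon$ before that of $x$. A secondary point worth spelling out is that at the first boundary-hitting time $t^\star$ all remaining coordinates are still nonnegative, which is precisely what makes the hypothesis of (b) applicable at that point; this is immediate from the definition of $t^\star$ as the infimum of exit times.
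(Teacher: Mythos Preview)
The paper states this proposition without proof, treating it as a standard characterization of internal positivity for nonlinear systems (in the same spirit as Proposition~\ref{prop:posL} for the linear case, which is cited from \cite{Farina:00}). Your argument is the classical one --- a boundary/first-exit-time analysis for $(a)\Rightarrow(b)$ and a Nagumo-type sub-tangentiality argument with $\varepsilon$-regularization for $(b)\Rightarrow(a)$ --- and it is correct under the mild regularity you already flag (local Lipschitz continuity of $f$ in $x$, continuity in $u$, so that continuous dependence on initial data and parameters applies). Two small points you could tighten: in the $(b)\Rightarrow(a)$ direction, the contradiction at $t^\star$ uses that the derivative $\dot x_i^\varepsilon(t^\star)$ actually exists and equals $f_i(x^\varepsilon(t^\star),u(t^\star))+\varepsilon$, which is immediate if $u$ is continuous but needs a word for merely measurable inputs (Carath\'eodory solutions, argue on a set of full measure); and in $(a)\Rightarrow(b)$, the step $\dot x_i(0^+)\ge0$ tacitly assumes the right derivative exists at $t=0$, which again follows from continuity of $f$ and $u$. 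Neither affects the validity of the approach.
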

Note that those conditions naturally reduce to those in Proposition \ref{prop:posL} in the linear case.

Define now the following sets
\begin{equation}
  \mathscr{Y}:=\left\{h(x):f(x,u)=0,u\in\mathbb{R}_{\ge0}\right\},
\end{equation}
\begin{equation}
  \mathscr{U}(y):=\left\{u:f(x,u)=0,y=h(x)\right\},
\end{equation}
\begin{equation}
  \mathscr{X}(y):=\left\{x:f(x,u)=0,y=h(x)\right\},
\end{equation}
which characterize the set of equilibrium output values for any given input value, the set of all constant inputs associated with a given equilibrium output value and the set of the equilibrium state values given an equilibrium value for the output. We assume for simplicity that
\begin{assumption}\label{hyp:3}
  For all $y\in\mathscr{Y}$, the sets $\mathscr{U}(y)$ and $\mathscr{X}(y)$ are singletons which are denoted by $u^*(y)$ and $x^*(y)$ their only element, respectively, and the map $F(u):=\left\{h(x):f(x,u)=0\right\}$ is continuous and strictly monotonic.
  %
\end{assumption}
\begin{assumption}\label{hyp:b}
  For all $y\in\mathscr{Y}$, the matrix
  \begin{equation}\label{eq:implicit}
    \left.\dfrac{\partial f}{\partial x}\right|_{(x,u)\in\mathscr{U}(y)\times\mathscr{X}(y)}
  \end{equation}
  is invertible.
\end{assumption}

We are now in position to properly defined the linearized system associated with \eqref{eq:NL} about an equilibrium point uniquely defined by the value of the output at stead-state:
\begin{proposition}
  Let us consider the system \eqref{eq:NL} that is assumed to be internally positive and to satisfy Assumption \ref{hyp:3} and Assumption \ref{hyp:b}. Then, the equilibrium point associated with the steady-state output $y^*=\mu\in\mathscr{Y}$ of the nonlinear system \eqref{eq:NL} is given by
  \begin{equation}
  u^*(\mu)=F^{-1}(\mu),\ x^*(\mu)=g(F^{-1}(\mu))
\end{equation}
where $F^{-1}$ is the inverse function of the nonlinear gain $F$ defined as $F(u)=h(g(u))$ and $g:\mathbb{R}\mapsto\mathbb{R}^n$ is such that $f(g(u),u)=0$. Moreover, the linearized system about that equilibrium is given by
\begin{equation}
  \begin{array}{rcl}
    \dot{\tilde x}(t)&=&\tilde A\tilde x(t)+\tilde B\tilde u(t)\\
    \tilde y(t)&=&\tilde C\tilde x(t)
  \end{array}
\end{equation}
where
\begin{equation}
\begin{bmatrix}
    \tilde A(\mu) & \tilde B(\mu)\\
    \tilde C(\mu) & 0
\end{bmatrix}:=\begin{bmatrix}
  \left.\dfrac{\partial f}{\partial x}\right|_{(x^*(\mu),u^*(\mu))} & \left.\dfrac{\partial f}{\partial u}\right|_{(x^*(\mu),u^*(\mu))}\\
  \left.\dfrac{\partial h}{\partial x}\right|_{(x^*(\mu),u^*(\mu))}& 0
\end{bmatrix}.
\end{equation}
\end{proposition}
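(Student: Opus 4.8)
The plan is to establish the two claims in turn: first the explicit form of the equilibrium point, which rests on the implicit function theorem combined with the structural Assumptions~\ref{hyp:3} and \ref{hyp:b}, and then the form of the linearized system, which is the routine first-order Taylor expansion of \eqref{eq:NL}.

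For the first claim I would begin by noting that an equilibrium of \eqref{eq:NL} under a constant input $u$ is any solution $x$ of $f(x,u)=0$. By Assumption~\ref{hyp:b} the matrix \eqref{eq:implicit} is invertible at every such equilibrium, so the implicit function theorem provides a locally unique, continuously differentiable branch $x=g(u)$ with $f(g(u),u)=0$; the singleton property of $\mathscr{X}(y)$ in Assumption~\ref{hyp:3} then upgrades this to a globally well-defined map on the relevant range of inputs. Composing with $h$ yields the static input--output gain $F(u):=h(g(u))$, which coincides with the map $F$ of Assumption~\ref{hyp:3} and is hence continuous and strictly monotonic; strict monotonicity makes $F$ injective, so it admits an inverse $F^{-1}$ on its image, and that image is exactly $\mathscr{Y}$ by definition of $\mathscr{Y}$.

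Given a prescribed steady-state output $\mu\in\mathscr{Y}$, I would then observe that the desired equilibrium pair $(x^*,u^*)$ must satisfy $f(x^*,u^*)=0$ and $h(x^*)=\mu$. Choosing $u^*=F^{-1}(\mu)$ gives $h(g(u^*))=F(u^*)=\mu$, so $x^*=g(u^*)=g(F^{-1}(\mu))$ meets both requirements, while uniqueness is immediate since $\mathscr{U}(\mu)$ and $\mathscr{X}(\mu)$ are singletons by Assumption~\ref{hyp:3}; this is precisely the asserted formula. For the linearization I would introduce the deviation variables $\tilde x:=x-x^*(\mu)$, $\tilde u:=u-u^*(\mu)$, $\tilde y:=y-\mu$, substitute into \eqref{eq:NL}, and Taylor-expand $f$ and $h$ to first order about $(x^*(\mu),u^*(\mu))$; the constant terms cancel because $f(x^*,u^*)=0$ and $h(x^*)=\mu$, which leaves exactly $\dot{\tilde x}=\tilde A(\mu)\tilde x+\tilde B(\mu)\tilde u$ and $\tilde y=\tilde C(\mu)\tilde x$ with the Jacobian blocks as displayed.

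I do not expect a genuine difficulty; the only step that needs care is the bookkeeping in the second paragraph, namely passing from the \emph{local} solvability and uniqueness given by the implicit function theorem to the \emph{global} well-definedness and invertibility of $g$ and $F$---which is exactly where Assumption~\ref{hyp:3} does its work---together with observing that $F^{-1}$, and hence the composition $g\circ F^{-1}$, is differentiable (again by the implicit function theorem via Assumption~\ref{hyp:b}), so that the Jacobians appearing in the conclusion are meaningful.
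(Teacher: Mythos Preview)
Your proposal is correct and follows essentially the same approach as the paper: invoke the implicit function theorem via Assumption~\ref{hyp:b} to obtain the locally well-defined map $g$, use the singleton hypotheses of Assumption~\ref{hyp:3} to upgrade to global well-definedness and invert the monotone gain $F$, and then perform the standard first-order expansion for the linearization. Your write-up is in fact more detailed than the paper's proof, which is a terse three-sentence sketch that does not spell out the Taylor expansion step; the extra care you take in distinguishing local from global solvability and in noting the differentiability of $F^{-1}$ is appropriate and does not deviate from the paper's argument.
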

\begin{proof}
  Assumption \ref{hyp:3} imposes the existence of a unique steady-state for every possible steady-state output value whereas Assumption \ref{hyp:b} implies that, by virtue of the  implicit function theorem, that the function $g:\mathbb{R}\mapsto\mathbb{R}^n$ mapping steady-state input values to steady-state state values is locally well-defined. Since the function $F$ is monotonic, it is invertible. The result then follows.
\end{proof}

\begin{assumption}\label{hyp:4}
  The matrix $\tilde A$ is Hurwitz stable and $\tilde C(\mu)\tilde A(\mu)^{-1}\tilde B(\mu)\ne0$.
\end{assumption}

As for linear systems, the condition $\tilde C(\mu)\tilde A(\mu)^{-1}\tilde B(\mu)\ne0$ states that the linearized system is locally output controllable. However, the DC-gain may not be positive, the matrix $\tilde A(\mu)$ may not be Metzler and the matrices $\tilde B(\mu),\tilde C(\mu)$ may not be nonnegative. This is, fortunately, not an issue as what really matters is the sign of the DC-gain. Indeed, if the DC-gain is positive then we shall choose $u=kz_1$. Otherwise, we shall choose $u=kz_2$. This will be clarified in the next section.

\subsection{Main results}\label{sec:nlpstab}

\blue{We consider the following version of Problem \ref{prob1}
\begin{problem}\label{prob2}
  Let the reference $\mu>0$ be given and assume that the system  \eqref{eq:NL} is internally positive and that it satisfies Assumption \ref{hyp:3}, Assumption \ref{hyp:b} and Assumption \ref{hyp:4}. Find an integral controller such that
  \begin{enumerate}[label=({\alph*})]
    \item the control input $u$ is nonnegative at all times;
    \item the equilibrium point of interest of the closed-loop system consisting of the system\eqref{eq:NL} and the controller is (locally) asymptotically stable;
    \item the output $y$ asymptotically tracks the reference $\mu>0$; i.e. $y(t)\to\mu$ as $t\to\infty$;
    \item the closed-loop system locally rejects constant disturbances acting on the input and on the state of the system.
    \end{enumerate}
\end{problem}
Again, the antithetic integral controller
\begin{equation}\label{eq:mainsystK2c}
  \begin{array}{lcl}
    \dot{z}_1(t)&=&\mu-\eta kz_1(t)z_2(t)\\
    \dot{z}_2(t)&=&h(x(t))-\eta kz_1(t)z_2(t)\\
    z(0)&=&z_0
  \end{array}
\end{equation}
will be shown to provide a suitable solution to the Problem \ref{prob2}. The main difference with the linear case is that the local gain of internally linear systems is always positive. We have the following preliminary result:
\begin{proposition}
Assume that Assumption \ref{hyp:3}, Assumption \ref{hyp:b} and Assumption \ref{hyp:4} are satisfied for the system \eqref{eq:NL} and assume that this system is internally positive. Then, the equilibrium point of the closed-loop system consisting of the internally positive system \eqref{eq:NL} and the antithetic integral controller \eqref{eq:mainsystK2c} with $u=kz_1$ is given by $\mathcal{X}_{\mu,\eta,k}^{*,+}:=(x^*,z^*)$ where
\begin{equation}\label{eq:point2}
 x^*(\mu)=g(F^{-1}(\mu)),\ z_1^*(\mu)=\dfrac{F^{-1}(\mu)}{k}\ \textnormal{and }z_2^*(\mu)=\dfrac{\mu}{\eta F^{-1}(\mu)} .
\end{equation}
When $u=kz_2$, the equilibrium point is denoted by $\mathcal{X}_{\mu,\eta,k}^{*,-}:=(x^*,z^*)$ where
\begin{equation}\label{eq:point3}
 x^*(\mu)=g(F^{-1}(\mu)),\ z_1^*(\mu)=\dfrac{\mu}{\eta F^{-1}(\mu)}\ \textnormal{and }z_2^*(\mu)=\dfrac{F^{-1}(\mu)}{k} .
\end{equation}
\end{proposition}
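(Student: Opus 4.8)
The plan is to solve the steady-state equations of the closed loop directly and then appeal to the uniqueness built into Assumption~\ref{hyp:3}. Setting $\dot z_1=\dot z_2=0$ in \eqref{eq:mainsystK2c}, the first equation gives $\eta k z_1^* z_2^*=\mu$ and the second then gives $h(x^*)=\eta k z_1^* z_2^*=\mu$; in addition $f(x^*,u^*)=0$ with $u^*=kz_1^*$ (resp.\ $u^*=kz_2^*$). Hence any equilibrium has output $y^*=h(x^*)=\mu$, so $\mu\in\mathscr{Y}$, and since $\mathscr{X}(\mu)$ and $\mathscr{U}(\mu)$ are singletons by Assumption~\ref{hyp:3}, one is forced to have $x^*=g(F^{-1}(\mu))$ and $u^*=F^{-1}(\mu)$ --- exactly the objects produced by the linearization proposition above, where Assumption~\ref{hyp:b} was used to make $g$ well defined through the implicit function theorem.

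Next I would substitute back through the controller relations, splitting on the actuation channel. If $u=kz_1$, then $z_1^*=u^*/k=F^{-1}(\mu)/k$, and inserting this into $\eta k z_1^* z_2^*=\mu$ gives $z_2^*=\mu/(\eta k z_1^*)=\mu/(\eta F^{-1}(\mu))$, which is \eqref{eq:point2}; the case $u=kz_2$ follows by the evident $z_1\leftrightarrow z_2$ symmetry of the two controller equations and yields \eqref{eq:point3}. The divisions are legitimate because $z_1^* z_2^*=\mu/(\eta k)\ne 0$ forces $z_1^*,z_2^*\ne 0$, hence $F^{-1}(\mu)\ne 0$; and for the equilibrium to lie in the nonnegative orthant one needs $F^{-1}(\mu)>0$, which is consistent with (and, in the positive setting, follows from) internal positivity together with the monotonicity of $F$ in Assumption~\ref{hyp:3}.

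Uniqueness then comes for free, since every step above is reversible once $\mu\in\mathscr{Y}$ is granted: the equilibrium output is forced to be $\mu$, the singleton property pins down $(x^*,u^*)$, and the two controller equations determine $(z_1^*,z_2^*)$ unambiguously. I do not expect any genuine obstacle here --- the argument is essentially algebraic. The only mild subtleties are the well-definedness and positivity of $F^{-1}(\mu)$, and keeping straight which of the two descriptions $\mathcal{X}^{*,+}$ and $\mathcal{X}^{*,-}$ is the relevant one, the latter being dictated by the sign of the local DC-gain $\tilde C(\mu)\tilde A(\mu)^{-1}\tilde B(\mu)$ as anticipated in the discussion just preceding the statement.
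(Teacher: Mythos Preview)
Your argument is correct and is exactly the natural direct computation: set the controller derivatives to zero to force $h(x^*)=\mu$, invoke the singleton property of Assumption~\ref{hyp:3} to pin down $(x^*,u^*)$, and back-substitute through $u^*=kz_i^*$ and $\eta k z_1^*z_2^*=\mu$ to read off the controller states. The paper in fact states this proposition without proof (the sentence following it is truncated and the text moves directly on to Theorem~\ref{th:AICNL1}), so there is nothing to compare against; your write-up supplies precisely the short verification the paper omits.
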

It is interesting to


We then have the following result in the case where $u=kz_1$:
\begin{theorem}\label{th:AICNL1}
 Let the conditions in Assumption \ref{hyp:3}, Assumption \ref{hyp:b} and Assumption \ref{hyp:4} be verified for the system \eqref{eq:NL} and assume that this system is internally positive. Let $\mu\in\mathscr{Y}$ be given. Assume further that $F(u)$ is monotonically increasing or, equivalently, that the local gain is positive for all $\mu\in\mathscr{Y}$.
  Then, for any $k\in(0,\bar{k}_\infty^+(\mu))$, the equilibrium point $\mathcal{X}_{\mu,\eta,k}^{*,+}$ is locally asymptotically stable for all $\eta>0$ where
   \begin{equation}
    \bar{k}^+_\infty(\mu):=\sup\left\{\kappa\ge0\ \textnormal{s.t.}\ \begin{bmatrix}
      \tilde A(\mu) & \tilde B(\mu)\kappa\\
      -\tilde C(\mu) & 0
    \end{bmatrix}\ \textnormal{is Hurwitz stable}\right\}.
  \end{equation}
\end{theorem}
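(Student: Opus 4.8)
The plan is to deduce the statement from Theorem~\ref{eq:mainconstructive} applied to the Jacobian linearization of the closed-loop system. By Lyapunov's indirect method, $\mathcal{X}_{\mu,\eta,k}^{*,+}$ is locally (exponentially) asymptotically stable as soon as the Jacobian of the smooth closed-loop vector field obtained by substituting $u=kz_1$ and $y=h(x)$ into \eqref{eq:NL}--\eqref{eq:mainsystK2c} is Hurwitz at that point. First I would compute this Jacobian at the equilibrium \eqref{eq:point2}: the blocks $\partial f/\partial x$, $k\,\partial f/\partial u$ and $\partial h/\partial x$ produce $\tilde A(\mu)$, $\tilde B(\mu)k$ and $\tilde C(\mu)$, the controller rows contribute $-k\eta z_2^*$ and $-k\eta z_1^*$, and inserting $z_1^*=F^{-1}(\mu)/k$, $z_2^*=\mu/(\eta F^{-1}(\mu))$ gives
\begin{equation*}
  \tilde A_{cl}=\begin{bmatrix}
    \tilde A(\mu) & \tilde B(\mu)k & 0\\[2pt]
    0 & -\dfrac{k\mu}{F^{-1}(\mu)} & -\eta F^{-1}(\mu)\\[6pt]
    \tilde C(\mu) & -\dfrac{k\mu}{F^{-1}(\mu)} & -\eta F^{-1}(\mu)
  \end{bmatrix}.
\end{equation*}
This is exactly the template of the matrix $\tilde A(k)$ in \eqref{eq:linear} with $(A,B,C)$ replaced by $(\tilde A(\mu),\tilde B(\mu),\tilde C(\mu))$, the scalar $CA^{-1}Bk$ replaced by $a:=-k\mu/F^{-1}(\mu)$, and $-\mu\eta/(CA^{-1}B)$ replaced by $\theta:=\eta F^{-1}(\mu)>0$ (here $F^{-1}(\mu)>0$ under the standing assumptions, since $F$ is increasing and $\mu>0$).

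Next I would check that the sign hypotheses driving the proof of Theorem~\ref{eq:mainconstructive} still hold. The assumption that $F$ is monotonically increasing means the local gain $F'(F^{-1}(\mu))=-\tilde C(\mu)\tilde A(\mu)^{-1}\tilde B(\mu)$ is positive, so the linearized DC-gain is negative exactly as $CA^{-1}B<0$ was in the internally positive linear case; and since $\mu>0$ and $F^{-1}(\mu)>0$ we get $a<0$, the analogue of $CA^{-1}Bk<0$. The key point to stress is that the linearized triple $(\tilde A(\mu),\tilde B(\mu),\tilde C(\mu))$ need not itself be internally positive and $\tilde A(\mu)$ need not be Metzler, but this is harmless: the Metzler/nonnegativity structure is never invoked in the proof of Theorem~\ref{eq:mainconstructive}, only that $\tilde A(\mu)$ is Hurwitz (Assumption~\ref{hyp:4}), that the DC-gain has a fixed sign, and that the linearized transfer function $\tilde C(\mu)(sI-\tilde A(\mu))^{-1}\tilde B(\mu)=N(s)/D(s)$ has no direct feedthrough, so $\deg N<\deg D=n$.

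The third step then replays the root-locus argument verbatim. Via the Schur complement, $\det(sI-\tilde A_{cl})=\det(sI-\tilde A(\mu))\bigl[s(s-a)+\theta(s+kH(s))\bigr]$ with $H=N/D$; clearing denominators, the non-trivial roots are those of $\tilde F(s,\theta):=s(s-a)D(s)+\theta\bigl(sD(s)+kN(s)\bigr)$, which at $\theta=0$ are $\{0,a\}\cup\sigma(\tilde A(\mu))$ and as $\theta\to\infty$ tend to the roots of $sD(s)+kN(s)=\det(sI-\mathcal{M}(k))$, Hurwitz for $k\in(0,\bar k_\infty^+(\mu))$ by definition of $\bar k_\infty^+(\mu)$ (together with the fact, established exactly as in the linear setting, that small positive gains are stabilizing). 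Stability of $\tilde F(\cdot,\theta)$ for all small $\theta>0$ follows from the same perturbation argument as in Lemma~\ref{lem:etasmall}. Since $\deg N<n$, there is a single root-locus asymptote, and Vieta's formulas place its centroid at $\chi=a<0$ (the $-d_{n-1}$ contributions cancel, as in Theorem~\ref{eq:mainconstructive}); hence the only root escaping to infinity escapes along the negative real axis, and $\tilde F(\cdot,\theta)$ remains Hurwitz for every $\theta\in(0,\infty)$, i.e.\ for every $\eta>0$, which is the claim.

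I expect the main obstacle to be organizational rather than conceptual: carefully writing out $\tilde A_{cl}$, verifying that substitution of \eqref{eq:point2} reproduces precisely the $(A,B,C)\mapsto(\tilde A,\tilde B,\tilde C)$ structure of \eqref{eq:linear}, and auditing every step of the proof of Theorem~\ref{eq:mainconstructive} to confirm that internal positivity was used only through its consequences (Hurwitzness of $\tilde A(\mu)$, the sign of the DC-gain, absence of feedthrough) and never through the Metzler property itself. The one genuinely new observation to record is that ``$F$ monotonically increasing'' together with ``$\mu>0$'' are exactly what is needed to recover those sign facts for the possibly non-positive linearization; internal positivity of \eqref{eq:NL} is then used only to guarantee that \eqref{eq:point2} is a genuine nonnegative equilibrium with $u=kz_1\ge0$, not for the stability conclusion.
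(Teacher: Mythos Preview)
Your proposal is correct and follows essentially the same approach as the paper: compute the Jacobian of the closed loop at the equilibrium \eqref{eq:point2}, observe that it has exactly the structure of the matrix \eqref{eq:linear} with $(A,B,C)$ replaced by $(\tilde A(\mu),\tilde B(\mu),\tilde C(\mu))$ and the scalars $CA^{-1}Bk$, $\eta\mu/(CA^{-1}B)$ replaced by $-k\mu/F^{-1}(\mu)$, $-\eta F^{-1}(\mu)$, and then invoke the linear-case argument of Theorem~\ref{eq:mainconstructive}. The paper's own proof is in fact terser than yours---it simply displays the Jacobian and states that ``the rest of the proof follows from exactly the same steps as in the linear case since the matrix is of similar structure''---whereas you additionally spell out the sign checks, the Schur-complement factorization, and the root-locus centroid computation, and you explicitly flag (correctly) that the Metzler property is never used in the proof of Theorem~\ref{eq:mainconstructive}, only Hurwitzness of $\tilde A(\mu)$, the sign of the DC-gain, and the absence of feedthrough.
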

\begin{proof}
Let the conditions in Assumption \ref{hyp:3}, Assumption \ref{hyp:b} and Assumption \ref{hyp:4} be verified. Then, the linearized system about the unique equilibrium point induced by $y^*(\mu)=\mu$ is given by
\begin{equation}\label{eq:localNL1}
    \begin{bmatrix}
      \dot{\tilde{x}}(t)\\
      \dot{\tilde{z}}_1(t)\\
      \dot{\tilde{z}}_2(t)
    \end{bmatrix}=\begin{bmatrix}
     \tilde  A(\mu) & \tilde B(\mu)k & 0\\
      0 & -\dfrac{k\mu}{F^{-1}(\mu)} & -\eta F^{-1}(\mu)\\
      \tilde C(\mu) & -\dfrac{k\mu}{F^{-1}(\mu)} & -\eta F^{-1}(\mu)
    \end{bmatrix} \begin{bmatrix}
      \tilde{x}(t)\\
      \tilde{z}_1(t)\\
      \tilde{z}_2(t)
    \end{bmatrix}
  \end{equation}
  where $\tilde{x}=x-x^*(\mu),\tilde{z}_1=z_1-z_1^*(\mu)$ and $\tilde{z}_2=z_2-z_2^*(\mu)$. The rest of the proof follows from exactly the same steps as in the linear case since the matrix is of similar structure. The only difference is that the results may now depend on the set-point $\mu$.
\end{proof}
We can see that the result reduces to that of Theorem \ref{eq:mainconstructive} when $F(u)=-CA^{-1}Bu$ and, hence, $F^{-1}(\mu)=-\mu/CA^{-1}B$. Indeed, substituting this expression in the matrix in \eqref{eq:localNL1} exactly yields the matrix in \eqref{eq:linear}. The value for  $\bar{k}^+_\infty(\mu)$ can be computed using the approaches described in Section \ref{sec:linan3}. In particular, if the transfer function $\tilde C(\mu)(sI-\tilde A(\mu))^{-1}\tilde B(\mu)$ is strictly positive real, then $\bar{k}^+_\infty(\mu)=\infty$.

Let us address now the case where $u=kz_2$:
\begin{theorem}\label{th:AICNL2}
 Let the conditions in Assumption \ref{hyp:3}, Assumption \ref{hyp:b} and Assumption \ref{hyp:4} be verified for the system \eqref{eq:NL} and assume that this system is internally positive. Let $\mu\in\mathscr{Y}$ be given.  Assume further that $F(u)$ is monotonically decreasing or, equivalently, that the local gain is negative for all $\mu\in\mathscr{Y}$.

 Then, for any $k\in(0,\bar{k}_\infty^-(\mu))$, the equilibrium point $\mathcal{X}_{\mu,\eta,k}^{*,+}$ is locally asymptotically stable for all $\eta>0$ where
  \begin{equation}\label{eq:jdsjdsk}
    \bar{k}^-_\infty(\mu):=\sup\left\{\kappa\ge0\ \textnormal{s.t.}\ \begin{bmatrix}
      \tilde A(\mu) & \tilde B(\mu)\kappa\\
      \tilde C(\mu) & 0
    \end{bmatrix}\ \textnormal{is Hurwitz stable}\right\}.
  \end{equation}
\end{theorem}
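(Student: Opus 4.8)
The plan is to mirror the perturbation/root-locus strategy of Theorem~\ref{eq:mainconstructive} and Theorem~\ref{th:AICNL1}, adapted to the fact that, the local gain now being negative, the control is injected through the second antithetic species, $u=kz_2$. First I would linearize the closed loop formed by \eqref{eq:NL} and \eqref{eq:mainsystK2c} with $u=kz_2$ about the equilibrium $\mathcal{X}^{*,+}_{\mu,\eta,k}$ singled out by $y^*=\mu$. Writing $\tilde x=x-x^*$, $\tilde z_i=z_i-z_i^*$ and $\tilde u=k\tilde z_2$, the Jacobian takes the form
\begin{equation*}
\begin{bmatrix}
\tilde A(\mu) & 0 & \tilde B(\mu)k\\
0 & -\eta F^{-1}(\mu) & -\dfrac{k\mu}{F^{-1}(\mu)}\\
\tilde C(\mu) & -\eta F^{-1}(\mu) & -\dfrac{k\mu}{F^{-1}(\mu)}
\end{bmatrix}.
\end{equation*}
This is the matrix of \eqref{eq:localNL1} in which the input block $\tilde B(\mu)k$ now appears in the $\tilde z_2$-column rather than the $\tilde z_1$-column, the two controller columns being correspondingly interchanged. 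This structural change is exactly what will flip the sign in front of $\tilde C(\mu)$ in the reduced characteristic polynomial and thereby produce the $+\tilde C(\mu)$ of \eqref{eq:jdsjdsk}.

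Next I would set $\theta:=\eta F^{-1}(\mu)>0$ (note $F^{-1}(\mu)>0$, the equilibrium input being positive) and $a:=k\mu/F^{-1}(\mu)>0$, and reduce the characteristic polynomial by a Schur complement against the $2\times2$ controller block, exactly as in Theorem~\ref{eq:mainconstructive}. With $H(s):=\tilde C(\mu)(sI-\tilde A(\mu))^{-1}\tilde B(\mu)=N(s)/D(s)$ this yields $\det(sI-\tilde A(\mu))$ times the scalar factor
\begin{equation*}
s\big(s+a-kH(s)\big)+\theta\big(s-kH(s)\big),
\end{equation*}
whose cleared-denominator form is $s[(s+a)D(s)-kN(s)]+\theta[sD(s)-kN(s)]$. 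The decisive point is the sign of the $kH(s)$ term inside the $\theta$-factor: here it is $s-kH(s)$, whereas for $u=kz_1$ in Theorem~\ref{th:AICNL1} it is $s+kH(s)$. Consequently the strong-binding limit $\theta\to\infty$ drives the finite roots to the zeros of $sD(s)-kN(s)$, i.e.\ to the eigenvalues of $\mathcal{M}_-(k):=\big[\begin{smallmatrix}\tilde A(\mu)&\tilde B(\mu)k\\ \tilde C(\mu)&0\end{smallmatrix}\big]$, which sit in the open left half-plane precisely when $k<\bar{k}^-_\infty(\mu)$ as defined in \eqref{eq:jdsjdsk}.

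Finally I would run the root-locus argument in $\theta$. The associated transfer function has $n+1$ zeros and $n+2$ poles, so there is a single asymptote at angle $\pi$, and Vieta's formulas give its real-axis intercept as $\chi=-a<0$; hence the one root escaping to infinity does so along the negative real axis. Together with the stability of the strong-binding limit, this is what I would use to conclude that the locus stays in the open left half-plane for all $\theta\in(0,\infty)$, so that $\mathcal{X}^{*,+}_{\mu,\eta,k}$ is locally asymptotically stable for every $\eta>0$ whenever $k\in(0,\bar{k}^-_\infty(\mu))$. The main obstacle, and the genuine difference from the positive-gain case, lies at the weak-binding end $\theta\to0$: whereas for $u=kz_1$ the $\theta=0$ poles are $\sigma(\tilde A(\mu))\cup\{0,-a\}$ (manifestly in the closed left half-plane), for $u=kz_2$ they are $\{0\}$ — which bifurcates into the open left half-plane for small $\theta>0$, as in Lemma~\ref{lem:etasmall} — together with the eigenvalues of $\big[\begin{smallmatrix}\tilde A(\mu)&\tilde B(\mu)k\\ \tilde C(\mu)&-a\end{smallmatrix}\big]$, i.e.\ the roots of $(s+a)D(s)-kN(s)$, whose left half-plane membership is not automatic. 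I would secure it by a secondary root locus in $a$ travelling from the eigenvalues of $\mathcal{M}_-(k)$ (stable since $k<\bar{k}^-_\infty(\mu)$) to the roots of $D(s)$ (stable since $\tilde A(\mu)$ is Hurwitz), and I would rule out imaginary-axis crossings for $\theta\in(0,\infty)$ via the stability-crossing test of Proposition~\ref{prop:k_bar}. Turning the asymptote/endpoint picture into a rigorous no-crossing statement is the step I expect to be hardest.
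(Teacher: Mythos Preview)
Your approach is essentially the paper's: the same Jacobian, the same perturbation-at-the-endpoints plus root-locus-in-$\theta$ strategy, and the same identification of the strong-binding limit with the eigenvalues of $\mathcal{M}_-(k)$. The paper's own proof is only a sketch that asserts the argument ``develops exactly as in the linear case'' and then writes ``the details are omitted.''

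You have in fact been more careful than the paper. You correctly observe that the $u=kz_2$ case does \emph{not} develop exactly as in the linear case at the weak-binding end: at $\theta=0$ the non-zero eigenvalues are the roots of $(s+a)D(s)-kN(s)$, i.e.\ the spectrum of $\left[\begin{smallmatrix}\tilde A(\mu)&\tilde B(\mu)k\\ \tilde C(\mu)&-a\end{smallmatrix}\right]$, and their location in the open left half-plane is not automatic. The paper's sketch tacitly assumes this when it perturbs the zero eigenvalue at $\eta=0$ and declares the matrix Hurwitz for small $\eta$; it never verifies that the remaining $n+1$ eigenvalues are already stable. Your secondary root locus in $a$, running from the eigenvalues of $\mathcal{M}_-(k)$ (stable by hypothesis) to those of $\tilde A(\mu)$ (Hurwitz by Assumption~\ref{hyp:4}), with a single asymptote at angle $\pi$, is a reasonable device to close this---and your candid remark that ruling out imaginary-axis crossings is the hardest step is well-placed: that gap is present in the paper's sketch too, not just in your proposal.
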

\begin{proof}
  Let the conditions in Assumption \ref{hyp:3}, Assumption \ref{hyp:b} and Assumption \ref{hyp:4} be verified. Then, the linearized system about the unique equilibrium point induced by $y^*(\mu)=\mu$ is given by
\begin{equation}\label{eq:localNL2}
    \begin{bmatrix}
      \dot{\tilde{x}}(t)\\
      \dot{\tilde{z}}_1(t)\\
      \dot{\tilde{z}}_2(t)
    \end{bmatrix}=\begin{bmatrix}
     \tilde  A(\mu) & 0 & \tilde B(\mu)k \\
      0 & -\eta F^{-1}(\mu) & -\dfrac{k\mu}{F^{-1}(\mu)}\\
      \tilde C(\mu)  & -\eta F^{-1}(\mu) & -\dfrac{k\mu}{F^{-1}(\mu)}
    \end{bmatrix} \begin{bmatrix}
      \tilde{x}(t)\\
      \tilde{z}_1(t)\\
      \tilde{z}_2(t)
    \end{bmatrix}
  \end{equation}
  where $\tilde{x}=x-x^*(\mu),\tilde{z}_1=z_1-z_1^*(\mu)$ and $\tilde{z}_2=z_2-z_2^*(\mu)$. The rest of the proof follows from the same arguments as in the linear case with the difference that the matrix has a different structure. The proof is only sketched for brevity. We first show that a small enough $k>0$ makes the above matrix Hurwitz stable. Indeed, the zero-eigenvalue of the matrix when $k=0$ moves in the direction $-\tilde C(\mu)\tilde A(\mu)^{-1}\tilde B(\mu)$ for some sufficiently small $k>0$. Since, the local gain is negative, then the zero eigenvalue moves in the open left half-plane, making the matrix Hurwitz stable for some sufficiently small $k>0$. Similarly, we show that for any sufficiently small $\eta>0$ makes the matrix Hurwitz stable. The zero-eigenvalue of the matrix when $\eta=0$ moves in the direction $-\tilde C(\mu)\tilde A(\mu)^{-1}\tilde B(\mu)F^{-1}(\mu)<0$  for some sufficiently small $\eta>0$. Hence the zero eigenvalue moves in the open left half-plane, making the matrix Hurwitz stable for some sufficiently small $k>0$. Analogously, we can show that the matrix Hurwitz stable for any sufficiently large $\eta>0$ provided that the matrix \eqref{eq:jdsjdsk} is Hurwitz stable. To prove that the Hurwitz stability of the matrix in \eqref{eq:jdsjdsk} is a necessary and sufficient condition for the local stability of the equilibrium point $\mathcal{X}_{\mu,\eta,k}^{*,-}$, we can invoke a root locus argument which develops exactly as in the linear case. The details are omitted.
\end{proof}
The value for  $\bar{k}^-_\infty(\mu)$ can be computed using the approaches described in Section \ref{sec:linan3}. In particular, if the transfer function $-\tilde C(\mu)(sI-\tilde A(\mu))^{-1}\tilde B(\mu)$ (not the minus sign) is strictly positive real, then $\bar{k}^-_\infty(\mu)=\infty$.}

\subsection{Examples}\label{sec:nlpex}

\noindent\textbf{SIS model.} Let us consider the following deterministic SIS model
\begin{equation}\label{eq:SIS}
  \begin{array}{lcl}
    \dot{x}_1(t)&=&-\beta x_1(t)x_2(t)+\alpha x_2(t)\\
    \dot{x}_2(t)&=&\beta x_1(t)x_2(t)-\alpha x_2(t)
  \end{array}
\end{equation}
where $x_1(t)$ and $x_2(t)$ represent the susceptible and infectious people, respectively. Since the system verifies the conservation law $x_1(t)+x_1(t)=N$ for all $t\ge0$, it can therefore be reduced to the system one-dimensional system:
\begin{equation}\label{eq:SIS2}
  \dot{x}_1(t)=-\beta x_1(t)(N-x_1(t))+\alpha(N-x_1(t)).
\end{equation}
Assume now that we would like to control the number of susceptible people to a certain value $\mu<N$, and that it is possible to control the recovery rate $\alpha$, thus we set $\alpha(t)=u(t)$. Hence,
\begin{equation}
  \mathscr{Y}=[0,\mu],\mathscr{X}(\mu)=\{\mu\} \textnormal{ and } \mathscr{U}(\mu)=F^{-1}(\mu)=\{\mu\beta\}
\end{equation}
and $F(u)=u/\beta$. Hence, the system satisfies Assumption \ref{hyp:3}. Since, the nonlinear gain is an monotonically increasing function of $u$, then the local DC-gain is positive and, therefore, we choose the controller
\begin{equation}\label{eq:SISK}
  \begin{array}{lcl}
    \dot{z}_1(t)&=&\mu-\eta kz_1(t)z_2(t)\\
    \dot{z}_2(t)&=&x_1(t)-\eta kz_1(t)z_2(t)\\
    u(t)&=&kz_1(t)
  \end{array}
\end{equation}
where $k,\eta>0$ are the controller parameters.

We then have the following result:
\blue{\begin{theorem}
The unique equilibrium point
\begin{equation}\label{eq:SISeq}
  \begin{array}{lclclclclcl}
    x_1^*&=&\mu, &&   z_1^*&=&\dfrac{\beta\mu}{k},&& z_2^*&=&\dfrac{1}{\beta\eta}
  \end{array}
\end{equation}
of the closed-loop system \eqref{eq:SIS2}-\eqref{eq:SISK} is locally exponentially stable for all $N,k,\eta,\beta>0$ and all $\mu\in(0,N)$.
\end{theorem}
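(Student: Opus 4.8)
The plan is to obtain the statement as a direct instantiation of Theorem~\ref{th:AICNL1}, exploiting the fact that the reduced plant \eqref{eq:SIS2} is scalar, which makes the whole construction explicit.

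First I would linearize the reduced dynamics $f(x_1,u):=(N-x_1)(u-\beta x_1)$ about the equilibrium $x_1^*=\mu$, $u^*=F^{-1}(\mu)=\beta\mu$. Differentiating, $\tilde A(\mu)=\partial_{x_1}f|_*=-(u^*-\beta\mu)-\beta(N-\mu)=-\beta(N-\mu)$, $\tilde B(\mu)=\partial_u f|_*=N-\mu$, and $\tilde C(\mu)=h^\prime(\mu)=1$. Since $\mu\in(0,N)$ we have $\tilde A(\mu)<0$, so $\tilde A$ is Hurwitz, and $\tilde C(\mu)\tilde A(\mu)^{-1}\tilde B(\mu)=-1/\beta\ne0$, so Assumption~\ref{hyp:4} holds; the local gain equals $1/\beta>0$, consistent with $F(u)=u/\beta$ being increasing, so the relevant controller is the one with $u=kz_1$ and the relevant equilibrium is $\mathcal{X}^{*,+}_{\mu,\eta,k}$, which one checks coincides with \eqref{eq:SISeq} via \eqref{eq:point2}.

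Next I would show $\bar k^+_\infty(\mu)=\infty$ for every $\mu\in(0,N)$, $\beta,N>0$. The matrix entering Theorem~\ref{th:AICNL1} is the $2\times2$ matrix $\begin{bmatrix}-\beta(N-\mu)&(N-\mu)\kappa\\-1&0\end{bmatrix}$, whose characteristic polynomial is $\lambda^2+\beta(N-\mu)\lambda+(N-\mu)\kappa$; both non-leading coefficients are strictly positive for every $\kappa>0$, so by Routh--Hurwitz the matrix is Hurwitz for all $\kappa>0$, i.e. $\bar k^+_\infty(\mu)=\infty$. Equivalently, the scalar transfer function $\tilde C(\mu)(sI-\tilde A(\mu))^{-1}\tilde B(\mu)=\tfrac{N-\mu}{s+\beta(N-\mu)}$ is a stable first-order function with positive DC gain, hence strictly positive real, since $\Re\!\big[\tfrac{N-\mu}{j\omega+\beta(N-\mu)}\big]=\tfrac{\beta(N-\mu)^2}{\omega^2+\beta^2(N-\mu)^2}>0$ and $\lim_{\omega\to\infty}\omega^2\Re[\cdot]=\beta(N-\mu)^2>0$, so the nonlinear analog of Theorem~\ref{th:passive} mentioned after Theorem~\ref{th:AICNL1} yields the same conclusion.

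Finally, Theorem~\ref{th:AICNL1} then gives local exponential stability of $\mathcal{X}^{*,+}_{\mu,\eta,k}$, i.e. of the point \eqref{eq:SISeq}, for all $k>0$ and all $\eta>0$; since the computation was carried out for an arbitrary fixed $\mu\in(0,N)$, $\beta>0$, $N>0$, this is precisely the claimed statement. I do not expect a genuine obstacle here: the only points needing care are the sign of the Jacobian of the quadratic term $(N-x_1)(u-\beta x_1)$ and confirming that the equilibrium produced by Theorem~\ref{th:AICNL1} matches \eqref{eq:SISeq}; the Hurwitz test trivializes to positivity of two coefficients because the reduced plant is one-dimensional.
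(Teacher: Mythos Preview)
Your proposal is correct and follows essentially the same route as the paper: both verify the hypotheses of Theorem~\ref{th:AICNL1} for the scalar reduced plant, compute the linearization $\tilde A(\mu)=-\beta(N-\mu)$, $\tilde B(\mu)=N-\mu$, $\tilde C(\mu)=1$, and then conclude $\bar k^+_\infty(\mu)=\infty$ from the strict positive realness of $H(s)=\dfrac{N-\mu}{s+\beta(N-\mu)}$. Your additional direct Routh--Hurwitz check on the $2\times2$ matrix is a minor variant of the same argument and is equally valid.
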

\begin{proof}
We need first to show that the system satisfies the assumptions. We have already shown that Assumption \ref{hyp:3} holds. The linearized system around the equilibrium point \eqref{eq:SISeq} is given by
\begin{equation}\label{eq:dsldsdjsld2}
  \dot{X}(t)=\begin{bmatrix}
    \beta(\mu-N) & k(N-\mu) & 0\\
    0 & -k/\beta & -\eta\beta\mu\\
    1 & -k/\beta & -\eta\beta\mu
  \end{bmatrix}X(t).
\end{equation}
The matrix $\tilde A(\mu)=\beta(\mu-N)$ is negative since $\mu<N$ and we have that $-C(\mu)A(\mu)^{-1}B(\mu)=1/\beta$. Therefore, the assumptions are satisfied. The result then follows from the fact that the local transfer function of the system given by
\begin{equation}
  H(s)=\dfrac{N-\mu}{s+\beta(N-\mu)}
\end{equation}
is strictly positive real for all $\beta,N$ and $0<\mu<N$.
%
\end{proof}}
%



For simulation purposes, let us consider the controlled SIS-model  \eqref{eq:SIS2}-\eqref{eq:SISK} with the parameters $\beta=1$, $\eta k=13$, $k=2$, $N=100$ and $\mu=99$; i.e. we aim at maintaining 99\% of the population healthy. The initial condition is set to $x_1(0)=90$ and $z_1(0)=z_2(0)=0$. We obtain the simulation results depicted in Figure \ref{fig:SIS}.
\begin{figure}[H]
  \centering
  \includegraphics[width=0.7\textwidth]{./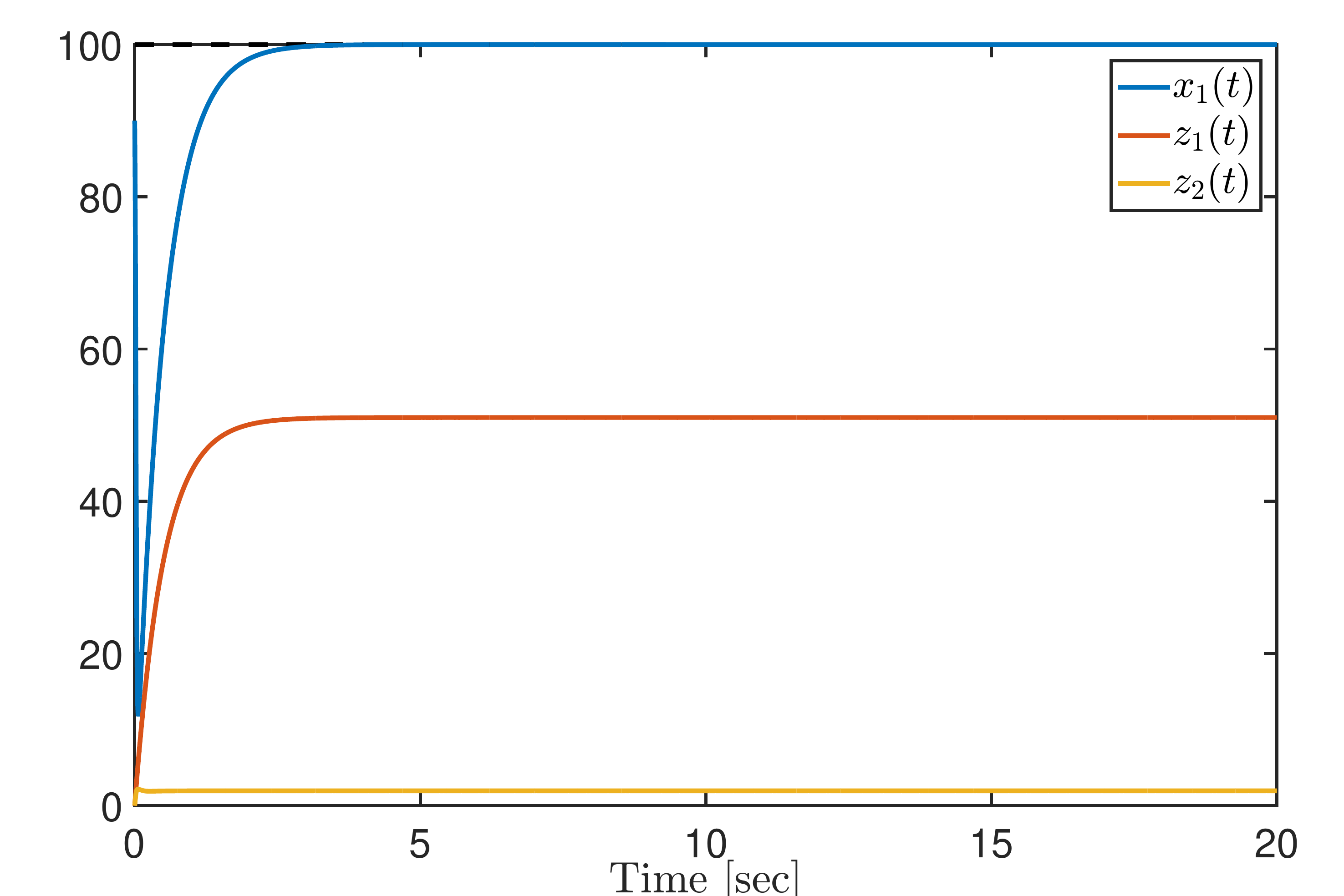}
  \caption{Controlled SIS system \eqref{eq:SIS2}-\eqref{eq:SISK} with the parameters $\beta=1$, $\eta k=13$, $k=2$, $N=100$ and $\mu=99$.}\label{fig:SIS}
\end{figure}

%
\blue{\noindent\textbf{Gene expression with controlled translation.} Let us consider the following gene expression model with repressed translation:

\begin{equation}\label{eq:trans}
  \begin{array}{lcl}
    \dot{x}_1&=&-\gamma_1 x_1+k_1\\
    \dot{x}_2&=&\dfrac{k_2x_1}{1+u}-\gamma_2x_2
  \end{array}
\end{equation}
where $x_1$ and $x_2$ denote the concentration of the mRNA and its associated protein, respectively. Choosing $y=x_2$, we have that
\begin{equation}
  F(u)=\dfrac{k_1k_2}{\gamma_1\gamma_2(1+u)}\ \textnormal{and, hence, }F^{-1}(\mu)=\dfrac{k_1k_2}{\gamma_1\gamma_2\mu}-1.
\end{equation}
Hence,
\begin{equation}
  \mathscr{Y}=\left[0,\dfrac{k_1k_2}{\gamma_1\gamma_2}\right],\mathscr{X}(\mu)=\left\{\begin{bmatrix}
    \dfrac{k_1}{\gamma_1}\\
    \mu
  \end{bmatrix}\right\}\textnormal{ and }\mathscr{U}(\mu)=\left\{\dfrac{k_1k_2}{\gamma_1\gamma_2\mu}-1\right\}.
\end{equation}
As a result, the system satisfies Assumption \ref{hyp:3}. Moreover, since the function $F(u)$ is a monotonically decreasing function of $u$, then the local gain is negative and, therefore, we need to consider the controller
\begin{equation}\label{eq:transK}
  \begin{array}{lcl}
    \dot{z}_1(t)&=&\mu-\eta kz_1(t)z_2(t)\\
    \dot{z}_2(t)&=&x_2(t)-\eta kz_1(t)z_2(t)\\
    u(t)&=&kz_2(t).
  \end{array}
\end{equation}
This yields the following result:
\begin{proposition}
  The unique equilibrium point
\begin{equation}
(x_1^*,x_2^*,z_1^*,z_2^*)=\left(\dfrac{k_1}{\gamma_1},\mu, \dfrac{\gamma_1\gamma_2\mu^2}{\eta(k_1k_2-\gamma_1\gamma_2\mu)},\dfrac{1}{k}\left(\dfrac{k_1k_2}{\gamma_1\gamma_2\mu}-1\right)\right)
\end{equation}
of the closed-loop system \eqref{eq:trans}-\eqref{eq:transK} is locally exponentially stable for all $\gamma_1,\gamma_2,k_1,\eta,k>0$ and all $\mu\in\left(0,\dfrac{k_1k_2}{\gamma_1\gamma_2}\right)$.
\end{proposition}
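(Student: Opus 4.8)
The plan is to reduce the claim to Theorem \ref{th:AICNL2} together with the strict-positive-realness remark that follows it, which states that $\bar k_\infty^-(\mu)=\infty$ as soon as $-\tilde C(\mu)(sI-\tilde A(\mu))^{-1}\tilde B(\mu)$ is strictly positive real. So the work reduces to (i) checking the standing assumptions, (ii) computing the local transfer function, and (iii) verifying it is strictly positive real.

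First I would verify the assumptions. Assumption \ref{hyp:3} has already been checked in the surrounding text, with $F(u)=k_1k_2/(\gamma_1\gamma_2(1+u))$ monotonically decreasing (hence the choice $u=kz_2$) and $F^{-1}(\mu)=\tfrac{k_1k_2}{\gamma_1\gamma_2\mu}-1>0$ for $\mu\in(0,k_1k_2/(\gamma_1\gamma_2))$. Writing $1+u^*=\tfrac{k_1k_2}{\gamma_1\gamma_2\mu}$, a direct differentiation of \eqref{eq:trans} about the equilibrium associated with $y^*=\mu$ gives
\[
\tilde A(\mu)=\begin{bmatrix} -\gamma_1 & 0\\[2pt] \dfrac{k_2}{1+u^*} & -\gamma_2\end{bmatrix},\qquad \tilde B(\mu)=\begin{bmatrix} 0\\[2pt] -\dfrac{k_1k_2}{\gamma_1(1+u^*)^2}\end{bmatrix},\qquad \tilde C(\mu)=\begin{bmatrix} 0 & 1\end{bmatrix}.
\]
Since $\tilde A(\mu)$ is lower triangular with eigenvalues $-\gamma_1,-\gamma_2<0$, it is Hurwitz and invertible, so Assumption \ref{hyp:b} holds and the first part of Assumption \ref{hyp:4} holds; the nonzero-DC-gain part of Assumption \ref{hyp:4} will follow from the next step.

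Next I would compute the local transfer function. Because $\tilde B(\mu)$ injects into $x_2$, the same state read by $\tilde C(\mu)$, the factor $(s+\gamma_1)$ cancels and one is left with a first-order transfer function
\[
\tilde C(\mu)(sI-\tilde A(\mu))^{-1}\tilde B(\mu)=-\dfrac{b}{s+\gamma_2},\qquad b:=\dfrac{k_1k_2}{\gamma_1(1+u^*)^2}>0 ,
\]
whose DC gain $-b/\gamma_2$ is negative (confirming the hypothesis of Theorem \ref{th:AICNL2} and Assumption \ref{hyp:4}). The cancelled pole at $-\gamma_1$ is stable, so it only contributes a fixed stable root to the closed-loop characteristic polynomial and does not interfere with the root-locus argument underlying Theorem \ref{th:AICNL2}. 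It then remains to check that $-\tilde C(\mu)(sI-\tilde A(\mu))^{-1}\tilde B(\mu)=\tfrac{b}{s+\gamma_2}$ is strictly positive real: $\Re\!\big[\tfrac{b}{j\omega+\gamma_2}\big]=\tfrac{b\gamma_2}{\omega^2+\gamma_2^2}>0$ for all $\omega\in\mathbb{R}_{\ge0}$, and $\lim_{\omega\to\infty}\omega^2\,\Re\!\big[\tfrac{b}{j\omega+\gamma_2}\big]=b\gamma_2>0$, so the condition of statement (b) of Theorem \ref{th:passive} holds (an alternative is to exhibit $P=\mathrm{diag}$-type solution of the KYP equations, but the frequency test is immediate here). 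By the remark following Theorem \ref{th:AICNL2} we conclude $\bar k_\infty^-(\mu)=\infty$, hence the equilibrium $\mathcal{X}_{\mu,\eta,k}^{*,-}$ is locally exponentially stable for all $k>0$, all $\eta>0$ and all $\mu\in(0,k_1k_2/(\gamma_1\gamma_2))$; substituting $F^{-1}(\mu)=\tfrac{k_1k_2}{\gamma_1\gamma_2\mu}-1$ into \eqref{eq:point3} and using $x^*(\mu)=(k_1/\gamma_1,\mu)$ yields the stated coordinates of the equilibrium.

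\textbf{Main obstacle.} There is no genuine difficulty; the only point needing care is the pole–zero cancellation at $-\gamma_1$, which must be recorded as \emph{stable} so that passing to the reduced first-order transfer function is legitimate both for the strict-positive-real test and for the underlying root-locus reasoning inherited from Theorem \ref{eq:mainconstructive}.
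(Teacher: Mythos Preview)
Your proposal is correct and follows essentially the same route as the paper's own proof: verify the standing assumptions, compute the linearized data $(\tilde A(\mu),\tilde B(\mu),\tilde C(\mu))$, observe that the local transfer function $-\tilde C(\mu)(sI-\tilde A(\mu))^{-1}\tilde B(\mu)$ reduces to a first-order transfer function with positive DC gain, hence is strictly positive real, and invoke the remark after Theorem~\ref{th:AICNL2} to get $\bar k_\infty^-(\mu)=\infty$. Your explicit handling of the stable pole--zero cancellation at $-\gamma_1$ is a careful addition that the paper leaves implicit, but it does not change the argument.
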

\begin{proof}
  The linearized closed-loop system is given by
\begin{equation}
  \tilde{A}(\mu)=\begin{bmatrix}
    -\gamma _1 & 0\\
    \dfrac{\gamma_1\gamma_2\mu}{k_1} & -\gamma_2
  \end{bmatrix},\tilde{B}(\mu)=\begin{bmatrix}
    0\\
    \dfrac{-\gamma_1\gamma_2^2}{k_1k_2}
  \end{bmatrix},\tilde{C}(\mu)=\begin{bmatrix}
      0 & 1
    \end{bmatrix}.
\end{equation}
We can see that the matrix $ \tilde{A}(\mu)$ is Hurwitz stable (hence invertible) for all $\mu\in\mathscr{Y}$ and that $$-\tilde{C}(\mu)\tilde{A}(\mu)^{-1}\tilde{B}(\mu)=-\dfrac{\gamma_1\gamma_2\mu^2}{k_1k_2}<0.$$ Hence, the linearized system satisfies Assumption \ref{hyp:4} and Assumption \ref{hyp:b} for all $\mu\in\mathscr{Y}$.  Therefore, we can apply Theorem \ref{th:AICNL2} and consider the transfer function
\begin{equation}
  H_\mu(s):=-\tilde C(\mu)(sI-\tilde A(\mu))^{-1}\tilde B(\mu)=\dfrac{\gamma_1\gamma_2^2}{k_1k_2(s+\gamma_2)}.
\end{equation}
This transfer function as a positive DC-gain and is of first-order type, which implies that it is strictly positive real. This proves the result.
\end{proof}}

\section{Exponential integral control of linear positive systems}\label{sec:exponential_linear}

\blue{We consider in this section the exponential integral controller. First the model of the exponential integral controller is given. The local stability of the different equilibrium points is then analyzed. Finally, a way to compute the maximum value of the exponential rate is provided. It is notably shown that the gain of the controller does not alter the stability properties of the closed-loop system.}

\subsection{The model}

The exponential integral controller is based on the consideration of
\begin{equation}
  \varphi(x)=e^{\alpha x},\alpha>0
\end{equation}
as regularizing function. This leads to the following result:
\begin{proposition}
 The exponential integral controller is given by
  \begin{equation}\label{eq:ic2}
  \begin{array}{rcl}
        \dot{v}(t)&=&\alpha v(t)(\mu-y(t)),v(0)=v_0\\
        u(t)&=&kv(t)
  \end{array}
  \end{equation}
  where $k>0$ is the gain of the controller and $\alpha>0$ is the exponential rate of the controller of the controller. The closed-form solution of the above system is given by $$v(t)=v_0\exp\left(\alpha\int_0^t[\mu-y(s)]ds\right).$$
\end{proposition}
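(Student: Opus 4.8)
The plan is to obtain both assertions as a direct specialization of the regularized-integration proposition of Section~\ref{sec:linpos2} to the choice $\varphi(x)=e^{\alpha x}$, together with the elementary observation that $e^{\alpha x}$ is its own derivative up to the factor $\alpha$. The controller here is the feedback interconnection of a standard integrator $\dot I(t)=w(t)$ driven by the tracking error $w(t)=\mu-y(t)$, post-composed with the positively regularizing function $\varphi$, and then scaled by the gain $k$; so the only real work is to push $\varphi$ through the integrator as in Section~\ref{sec:linpos2} and to exhibit the trajectory in closed form.

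First I would compute $\varphi'(x)=\alpha e^{\alpha x}=\alpha\varphi(x)$, whence $\varphi'(\varphi^{-1}(v))=\alpha v$ for every $v$ in the image $\varphi(\mathbb{R})=\mathbb{R}_{>0}$; note in passing that this is exactly the exponential row of Table~\ref{tab:driving}. Substituting $w(t)=\mu-y(t)$ into the regularized-integrator dynamics $\dot v(t)=\varphi'(\varphi^{-1}(v(t)))\,w(t)$ from the proposition in Section~\ref{sec:linpos2} then gives immediately $\dot v(t)=\alpha v(t)(\mu-y(t))$, with initial condition $v(0)=v_0:=\varphi(I(0))=e^{\alpha I(0)}>0$, and the output relation $u(t)=kv(t)$ is simply the gain placed after the regularized integrator. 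This establishes \eqref{eq:ic2}.

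For the closed-form solution I would argue at the level of the underlying standard integrator: its solution is $I(t)=I(0)+\int_0^t[\mu-y(s)]\,ds$, and composing with $\varphi$ yields $v(t)=e^{\alpha I(t)}=e^{\alpha I(0)}\exp\!\left(\alpha\int_0^t[\mu-y(s)]\,ds\right)=v_0\exp\!\left(\alpha\int_0^t[\mu-y(s)]\,ds\right)$. Alternatively one can stay inside the reduced model: since $v(0)>0$ and $v=0$ is an absorbing state (the right-hand side vanishes there), $v(t)>0$ for all $t$, so dividing \eqref{eq:ic2} by $v(t)$ gives $\tfrac{d}{dt}\ln v(t)=\alpha(\mu-y(t))$, and integrating from $0$ to $t$ and exponentiating recovers the same expression.

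There is no substantial obstacle here; the argument is routine. The only point deserving a word of care is the well-definedness of $\varphi^{-1}$ along the trajectory, i.e.\ that $v$ never leaves $\mathbb{R}_{>0}$ — but this is precisely the positivity noted above and is manifest from the closed-form formula, since the exponential is strictly positive. (This same absorbing-state feature is what the surrounding discussion flags as the reason the exponential integral controller is expected to fail in the stochastic setting.)
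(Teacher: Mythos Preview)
Your proposal is correct and follows exactly the route the paper intends: the paper states this proposition without an explicit proof, treating it as an immediate specialization of the regularized-integration proposition in Section~\ref{sec:linpos2} to $\varphi(x)=e^{\alpha x}$ (the exponential row of Table~\ref{tab:driving}), with $w=\mu-y$. Your derivation of both the dynamics and the closed-form solution fills in precisely those details, and your positivity remark is consistent with the paper's surrounding discussion.
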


\blue{The name comes from the fact that the integral action is exponentiated. Interestingly,  this controller has a form that is reminiscent of the logistic model; see e.g. \cite{Verhulst:38,Brauer:12}.} It also shares some connection with biochemical models exhibiting the property of Absolute Concentration Robustness (ACR) \cite{Shinar:10}. Those networks have species whose stationary values that do not depend on initial conditions. The connection between Absolute Concentration Robustness and integral control has been recently clarified in \cite{Cappelletti:19}.

\begin{remark}
  The controller \eqref{eq:ic2} is only valid when the local DC-gain of the open-loop system is positive, which is always the case for linear internally positive systems. In the case of nonlinear internally positive systems, the local DC-gain can be negative and, in such a case, one should use the controller
  $$\dot{v}(t)=\alpha v(t)(y(t)-\mu)$$
  where we have swapped the error terms.
\end{remark}

\subsection{Main results}

We now address the question of the existence of equilibrium points for the closed-loop system \eqref{eq:mainsystL}-\eqref{eq:ic2}.
\begin{proposition}
  The closed-loop system \blue{consisting of the system \eqref{eq:mainsystL} and the controller \eqref{eq:ic2}} admits two equilibrium points:
  \begin{enumerate}[label=({\alph*})]
    \item the first one is the trivial equilibrium point
    \begin{equation}\label{eq:trivial_eqpt}
      (x^*,v^*)=(0,0)
    \end{equation} whereas
  \item the second one is the positive equilibrium point
  \begin{equation}\label{eq:positive_eqpt}
     (x^*,z^*)=\left(\dfrac{A^{-1}B\mu}{CA^{-1}B},\dfrac{-\mu}{CA^{-1}Bk}\right)
  \end{equation}
  which exists provided that $\mu>0$.
  \end{enumerate}
\end{proposition}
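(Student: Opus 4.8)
The plan is to characterize the equilibria by setting all time derivatives in \eqref{eq:mainsystL} and \eqref{eq:ic2} to zero and solving the resulting algebraic system. The equilibrium conditions read $Ax^*+Bu^*=0$, $u^*=kv^*$, and $\alpha v^*(\mu-Cx^*)=0$. Since $A$ is Hurwitz by Assumption \ref{hyp:1}, it is invertible, so the first two relations give $x^*=-kA^{-1}Bv^*$, which collapses everything to a single scalar equation for $v^*$.

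First I would use $\alpha>0$ to split $\alpha v^*(\mu-Cx^*)=0$ into two cases. In the first case $v^*=0$, which forces $u^*=0$ and hence $x^*=-kA^{-1}B\cdot 0=0$, giving the trivial equilibrium \eqref{eq:trivial_eqpt}. In the second case $\mu-Cx^*=0$; substituting $x^*=-kA^{-1}Bv^*$ yields $-kCA^{-1}Bv^*=\mu$, so $v^*=-\mu/(CA^{-1}Bk)$, and plugging this back into $x^*=-kA^{-1}Bv^*$ produces $x^*=A^{-1}B\mu/(CA^{-1}B)$, exactly \eqref{eq:positive_eqpt}. Since these two cases are exhaustive, no other equilibria exist.

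The last point is to verify that the second equilibrium is genuinely positive and distinct from the trivial one when $\mu>0$. For this I would invoke Proposition \ref{prop:dsdsds}, which gives $CA^{-1}B<0$ under the standing hypotheses, and also $A^{-1}B\le 0$ (the Metzler--Hurwitz structure makes $A^{-1}$ nonpositive, cf. the proof of that proposition). Combined with $k>0$ and $\mu>0$, this yields $v^*=-\mu/(CA^{-1}Bk)>0$ and $x^*=A^{-1}B\mu/(CA^{-1}B)\ge 0$, so the equilibrium lies in the nonnegative orthant and is nonzero. I do not expect any real obstacle here: the argument is a short algebraic computation, and the only subtlety is tracking the sign of $CA^{-1}B$ to guarantee existence and positivity of the nontrivial equilibrium, which is already established earlier in the paper.
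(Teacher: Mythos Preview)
Your argument is correct and is precisely the natural computation: setting the right-hand sides to zero, using invertibility of $A$ from Assumption \ref{hyp:1}, and splitting on the factor $v^*(\mu-Cx^*)$. The paper does not supply an explicit proof for this proposition (it is treated as immediate), so your derivation simply fills in the routine algebra; the additional sign check via Proposition \ref{prop:dsdsds} to confirm positivity is a nice touch that the paper leaves implicit.
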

Clearly, the first equilibrium is undesirable since it does solve the control problem stated in Problem \ref{prob1}. Indeed, the output does not track the desired set-point $\mu$ unlike in the second case. The results below show that the first equilibrium point is structurally unstable whereas the second one can be made locally exponentially stable through a suitable choice for the controller parameters. Let us start first with the trivial equilibrium point:
\begin{proposition}\label{prop:stab_zero_1}
\blue{Let us consider the closed-loop system consisting of the system \eqref{eq:mainsystL} and the controller \eqref{eq:ic2}.} Then, the zero-equilibrium point \eqref{eq:trivial_eqpt} is unstable for all $\alpha,k,\mu>0$.
\end{proposition}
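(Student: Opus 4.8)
The plan is to invoke the instability half of Lyapunov's indirect method: it suffices to exhibit, in the Jacobian of the closed-loop vector field at $(0,0)$, an eigenvalue with strictly positive real part. The closed-loop dynamics obtained by substituting $u=kv$ and $y=Cx$ are $\dot x = Ax + Bkv$, $\dot v = \alpha v(\mu - Cx)$, which are polynomial and hence smooth everywhere, so the linearization is legitimate at this equilibrium and the only real work is to compute the Jacobian and read off its spectrum.

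First I would differentiate at the origin. The $x$-equation contributes $\partial(\dot x)/\partial x = A$ and $\partial(\dot x)/\partial v = Bk$. In the $v$-equation, the partial derivative with respect to $x$ is $-\alpha v\,C$, which vanishes on $\{v=0\}$ and in particular at the origin, while the partial derivative with respect to $v$ is $\alpha(\mu - Cx)$, which equals $\alpha\mu$ at $x=0$. Hence the Jacobian is the block upper-triangular matrix
\[
  J \;=\; \begin{bmatrix} A & Bk \\ 0 & \alpha\mu \end{bmatrix},
\]
so that $\sigma(J) = \sigma(A) \cup \{\alpha\mu\}$.

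Since $\alpha,\mu>0$, the scalar $\alpha\mu$ is a real eigenvalue of $J$ lying in the open right half-plane, and this is true regardless of $k$ and regardless of the (Hurwitz) matrix $A$. Therefore the equilibrium $(0,0)$ is unstable for all $\alpha,k,\mu>0$, which is exactly the claim. I do not anticipate any genuine obstacle: the statement is essentially a one-line linearization argument. The only remark worth adding is that the instability direction is (essentially) the $v$-axis, which can also be seen without linearizing, since $\{v=0\}$ is invariant and any trajectory started with small $x(0)$ and $v(0)>0$ satisfies $\dot v / v = \alpha(\mu - Cx)\approx \alpha\mu>0$ for small times, so $v$ grows exponentially and the trajectory leaves every small neighbourhood of the origin.
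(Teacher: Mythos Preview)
Your proof is correct and follows essentially the same approach as the paper: linearize at the origin to obtain the block upper-triangular Jacobian $\begin{bmatrix} A & Bk \\ 0 & \alpha\mu \end{bmatrix}$ and read off the unstable eigenvalue $\alpha\mu>0$. Your additional remark on the invariant set $\{v=0\}$ and the direct growth argument is a nice supplement but not needed for the result.
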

\begin{proof}
The linearized system about that equilibrium point is given by
  \begin{equation}
  \begin{bmatrix}
    \dot{\tilde{x}}(t)\\
    \dot{\tilde{v}}(t)
  \end{bmatrix}=\begin{bmatrix}
    A & Bk\\
    0 & \alpha \mu
  \end{bmatrix} \begin{bmatrix}
    \tilde{x}(t)\\
    \tilde{v}(t)
  \end{bmatrix}
\end{equation}
and is obviously unstable since $\alpha\mu>0$.
\end{proof}
While the zero equilibrium point is unstable in the deterministic setting, it actually becomes an absorbing state in the stochastic setting and, therefore, cannot be used in that setting unlike the antithetic integral controller.

We address now the stability of the positive equilibrium point:
\begin{proposition}\label{prop:stab_positive_1}
   \blue{Let us consider the closed-loop system consisting of the system \eqref{eq:mainsystL} and the controller \eqref{eq:ic2} and let} $\mu>0$ be given. Then, there exists an $\bar{\alpha}>0$ such that the positive equilibrium point \eqref{eq:positive_eqpt}
%
  is locally exponentially stable for all $k>0$ and all $\alpha\in(0,\bar{\alpha})$ where
    \begin{equation}\label{eq:baralpha1}
      \bar{\alpha}:=\sup\left\{\nu>0:\ \begin{bmatrix}
        A & B\\
          \dfrac{\nu\mu C}{CA^{-1}B} & 0
      \end{bmatrix}\ \textnormal{Hurwitz}\right\}.
    \end{equation}
\end{proposition}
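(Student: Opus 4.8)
The plan is to linearise the closed-loop system about the positive equilibrium \eqref{eq:positive_eqpt}, to eliminate the dependence on the gain $k$ by a similarity transformation, and then to combine a simple-eigenvalue perturbation argument in $\alpha$ with the root-locus / stability-crossing analysis already used for $\mathcal{M}(\cdot)$ in the linear antithetic case.

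First I would compute the Jacobian of the closed-loop vector field $(\dot x,\dot v)=(Ax+Bkv,\ \alpha v(\mu-Cx))$ at the positive equilibrium. Using $Cx^{*}=\mu$ and $v^{*}=-\mu/(CA^{-1}Bk)$ from \eqref{eq:positive_eqpt}, the linearisation is
\begin{equation*}
  \tilde A(\alpha,k)=\begin{bmatrix} A & Bk\\ \frac{\alpha\mu C}{CA^{-1}Bk} & 0\end{bmatrix}.
\end{equation*}
Conjugating by the diagonal matrix $T=\operatorname{diag}(I_{n},k)$ gives $T\tilde A(\alpha,k)T^{-1}=\mathcal N(\alpha)$, where $\mathcal N(\nu):=\begin{bmatrix} A & B\\ \frac{\nu\mu C}{CA^{-1}B} & 0\end{bmatrix}$ is exactly the matrix appearing in \eqref{eq:baralpha1}. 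Since $\mathcal N(\alpha)$ does not involve $k$, the local exponential stability of the positive equilibrium is equivalent to the Hurwitz stability of $\mathcal N(\alpha)$, which already settles the claimed independence of $k$.

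Next I would show that $\mathcal N(\alpha)$ is Hurwitz for all sufficiently small $\alpha>0$, so that the supremum defining $\bar\alpha$ is positive. Writing $\mathcal N(\alpha)=\mathcal N(0)+\alpha\mathcal N_{1}$ with $\mathcal N(0)=\begin{bmatrix}A & B\\ 0 & 0\end{bmatrix}$, the spectrum of $\mathcal N(0)$ is $\sigma(A)\cup\{0\}$, which is marginally stable with a simple zero eigenvalue because $A$ is Hurwitz (Assumption \ref{hyp:1}). The corresponding normalised left and right eigenvectors are $v_{\ell}=\begin{bmatrix}0 & 1\end{bmatrix}$ and $v_{r}=\begin{bmatrix}-A^{-1}B\\ 1\end{bmatrix}$, so the first-order bifurcation coefficient is $\lambda_{1}=v_{\ell}\mathcal N_{1}v_{r}=-\mu<0$ (here $CA^{-1}B\neq0$ from Assumption \ref{hyp:1} is used only to make $\mathcal N_{1}$ well defined, and the $CA^{-1}B$ factor cancels). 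Hence the zero eigenvalue of $\mathcal N(0)$ moves into the open left half-plane under an infinitesimal positive perturbation in $\alpha$, and by continuity of the spectrum $\mathcal N(\alpha)$ is Hurwitz for all $\alpha$ in a right neighbourhood of $0$.

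The remaining, and main, point is to upgrade this to stability on the whole interval $(0,\bar\alpha)$, i.e. to show that stability is not first lost and then regained. For this I would reuse the root-locus argument from the proof of Theorem \ref{eq:mainconstructive}. Writing $G(s)=C(sI-A)^{-1}B=N(s)/D(s)$ with $D(s)=\det(sI-A)$ and $g=-CA^{-1}B>0$ (positive by Proposition \ref{prop:dsdsds}), the Schur complement formula gives $\det(sI-\mathcal N(\nu))=sD(s)+\frac{\nu\mu}{g}N(s)$, which is precisely the characteristic polynomial of $\mathcal M\!\left(\frac{\nu\mu}{g}\right)$, the closed-loop denominator of $G(s)$ under a standard integral controller of gain $\frac{\nu\mu}{g}$. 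Since $\nu\mapsto\frac{\nu\mu}{g}$ is an increasing linear change of parameter, the stability-crossing analysis carried out for $\mathcal M(\kappa)$ in Theorem \ref{eq:mainconstructive} and Proposition \ref{prop:k_bar} transfers verbatim: no eigenvalue escapes to the right along an asymptote, the first imaginary-axis crossing occurs at $\nu=\bar\alpha$, and $\mathcal N(\nu)$ is Hurwitz precisely for $\nu\in(0,\bar\alpha)$ (equivalently $\bar\alpha=\frac{g}{\mu}\,\bar k_{\infty}$). I expect this connectedness / one-directional-crossing step to be the only delicate part of the argument; the similarity reduction and the simple-eigenvalue perturbation formula are routine.
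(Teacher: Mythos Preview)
Your argument follows the paper's proof closely: the same Jacobian, the same simple-eigenvalue perturbation in $\alpha$ yielding $\lambda_1=-\mu<0$, and the same conclusion that $\mathcal N(\alpha)$ is Hurwitz for small $\alpha>0$. Your explicit similarity transformation with $T=\operatorname{diag}(I_n,k)$ is a cleaner justification of the $k$-independence than the paper's bare assertion that ``it is immediate to see that the eigenvalues of the above matrix do not depend on $k$''.

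Where you differ is in the final step. The paper's proof actually stops after the perturbation argument, concluding only that \emph{some} $\bar\alpha>0$ exists; it does not argue that the stability set coincides with the full interval $(0,\bar\alpha)$ for the specific $\bar\alpha$ defined in \eqref{eq:baralpha1}. Your observation that $\det(sI-\mathcal N(\nu))=sD(s)+\tfrac{\nu\mu}{g}N(s)=\det(sI-\mathcal M(\nu\mu/g))$, and hence $\bar\alpha=\tfrac{g}{\mu}\bar k_\infty$, is correct and lets you import the crossing analysis of Proposition~\ref{prop:k_bar}. This is a genuine addition rather than a different route: it supplies the connectedness of the stability interval that the paper's proof of this proposition leaves implicit. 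One caution: the root-locus argument in Theorem~\ref{eq:mainconstructive} varies $\theta$ (equivalently $\eta$) for fixed $k$, not $\kappa$ itself, so it does not directly give connectedness in $\kappa$; Proposition~\ref{prop:k_bar} is the more apt reference, and even there connectedness is tacitly assumed rather than proved. You are right to flag this as the only delicate point.
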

\begin{proof}
The linearized system about that equilibrium point is given by
  \begin{equation}\label{eq:locstab1}
  \begin{bmatrix}
    \dot{\tilde{x}}(t)\\
    \dot{\tilde{v}}(t)
  \end{bmatrix}=\begin{bmatrix}
    A & Bk\\
    \dfrac{\alpha C\mu}{CA^{-1}Bk} & 0
  \end{bmatrix} \begin{bmatrix}
    \tilde{x}(t)\\
    \tilde{v}(t)
  \end{bmatrix}.
\end{equation}
It is immediate to see that the eigenvalues of the above matrix do not depend on $k$. Therefore, we only need to characterize the dependence of the eigenvalues on $\alpha,\mu>0$. We prove now using a perturbation argument that the matrix is Hurwitz for some sufficiently small $\alpha>0$. When $\alpha=0$, the resulting matrix has $n$ stable eigenvalues (those of $A$) and one eigenvalue at 0. The eigenvalue at 0 has normalized left- and right-eigenvectors given by
\begin{equation}
\ u_\ell=\begin{bmatrix}
    0_{1\times n} & 1
  \end{bmatrix},\  u_r=\begin{bmatrix}
    -A^{-1}B \\ 1
  \end{bmatrix}
\end{equation}
respectively. The theory of perturbation of eigenvalues says that the zero-eigenvalue locally changes under the effect of the perturbation of $\alpha$ around $\alpha=0$ according to the relation $\lambda(\alpha)=\lambda(0)+u_\ell Mu_r\alpha+o(\alpha)$ where $\lambda(0)=0$ and
\begin{equation*}
  M = \begin{bmatrix}
    0 & 0\\
    \dfrac{C\mu}{CA^{-1}B} & 0
  \end{bmatrix}.
\end{equation*}
We therefore obtain that $\lambda(\alpha)=-\mu \alpha+o(\alpha)$. This means that by slightly perturbing $\alpha$ from 0 to positive values, we can shift the marginally stable eigenvalue into the open left-half plane. This means that there exists $\bar{\alpha}>0$ such that for all $\alpha<\bar{\alpha}$, the Jacobian matrix is Hurwitz. 
\end{proof}

\blue{We prove below a robustness result which seems to be specific to the integral controller
\begin{theorem}\label{th:robustgain}
  Assume that the system \eqref{eq:mainsystL} is internally positive and that it satisfies Assumption \ref{hyp:1}. Assumer further that the system \eqref{eq:mainsystL} depends on some parameters $p=(p_1,\ldots,p_K)$ and let $\bar{p}_1=(p_1,\ldots,p_\ell)$ and $\bar{p}_2=(p_{\ell+1},\ldots,p_K)$ for some $0<\ell\le K$. Assume further that the transfer function of the system writes
  \begin{equation}
    C(p)(sI-A(p))^{-1}B(p)=G(\bar{p}_1)\tilde H(s,\bar{p}_2)
  \end{equation}
  for some function $G:\mathbb{R}^\ell_{\ge0}\mapsto\mathbb{R}$ and some transfer function $\tilde H(s,\bar{p}_2)$.

  Then, the stability of the equilibrium point does not depend on the parameters in $\bar{p}_1$.
\end{theorem}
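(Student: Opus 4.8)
The plan is to compute the characteristic polynomial of the Jacobian at the positive equilibrium point \eqref{eq:positive_eqpt} explicitly and to show that the prefactor $G(\bar p_1)$ drops out of it, so that the resulting stability test involves only $\alpha,\mu$ and $\bar p_2$. First I would start from the linearization established in the proof of Proposition~\ref{prop:stab_positive_1}, whose Jacobian is
\[
J=\begin{bmatrix} A & Bk\\ \dfrac{\alpha C\mu}{CA^{-1}Bk} & 0\end{bmatrix},
\]
and compute $\det(sI-J)$ by the Schur determinant formula with respect to the scalar lower-right block, combined with the matrix determinant lemma (the perturbation produced is rank one). This gives the polynomial identity
\[
\det(sI-J)=\det(sI-A)\left(s+\dfrac{\alpha\mu}{H(0)}\,H(s)\right),
\]
where $H(s):=C(sI-A)^{-1}B$ and where $H(0)=-CA^{-1}B\neq0$ by Assumption~\ref{hyp:1}; in passing this also re-proves that the spectrum is independent of $k$.

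Next I would pass to a coprime polynomial description. Writing $H(s)=N(s)/D(s)$ in lowest terms with $D$ monic, and using that the poles of $H$ are among the eigenvalues of $A$, one has $\det(sI-A)=D(s)R(s)$ for a monic polynomial $R$ whose roots are eigenvalues of $A$ and hence lie in the open left half-plane by Assumption~\ref{hyp:1}. Substituting and clearing denominators,
\[
\det(sI-J)=R(s)\left(sD(s)+\dfrac{\alpha\mu\,D(0)}{N(0)}\,N(s)\right)=:R(s)\,P(s),
\]
so that, $R$ being Hurwitz, local exponential stability of \eqref{eq:positive_eqpt} is equivalent to $P$ being Hurwitz.

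Finally I would invoke the factorization hypothesis. Write $\tilde H(s,\bar p_2)=\tilde N(s)/\tilde D(s)$ in lowest terms with $\tilde D$ monic. The identity $H(s)=G(\bar p_1)\tilde H(s,\bar p_2)$ --- together with $G(\bar p_1)\neq0$, which holds because $H(0)=G(\bar p_1)\tilde H(0)\neq0$ --- forces $D=\tilde D$ and $N=G(\bar p_1)\tilde N$. Substituting, the constant $G(\bar p_1)$ cancels in $D(0)N(s)/N(0)=\tilde D(0)\tilde N(s)/\tilde N(0)$, so that
\[
P(s)=s\tilde D(s)+\dfrac{\alpha\mu\,\tilde D(0)}{\tilde N(0)}\,\tilde N(s),
\]
which depends only on $\alpha$, $\mu$ and $\bar p_2$. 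Hence the Hurwitz property of $P$, and therefore the local exponential stability of the positive equilibrium point, does not depend on $\bar p_1$. I expect the only delicate point to be the bookkeeping of pole--zero cancellations: one must verify that $s+\frac{\alpha\mu}{H(0)}H(s)$, a priori only meromorphic, becomes a genuine polynomial after multiplication by $\det(sI-A)$, and that the surviving ``extra'' factor $R(s)$ --- which may itself depend on $\bar p_1$ --- is always Hurwitz and hence irrelevant to the stability question.
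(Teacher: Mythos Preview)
Your proposal is correct and follows essentially the same route as the paper: compute the characteristic polynomial of the Jacobian via the Schur determinant formula, observe that it factors through the \emph{normalized} transfer function $H(s)/H(0)=H_n(s)$, and note that the scalar $G(\bar p_1)$ cancels in this normalization. The paper writes this directly as $P(s,\alpha)=\det(sI-A(p))\bigl(s+\alpha\mu H_n(s,\bar p_2)\bigr)$ and stops there. You go one step further by isolating the ``extra'' factor $R(s)$ arising from possible pole--zero cancellations and arguing that it may depend on $\bar p_1$ but is always Hurwitz under Assumption~\ref{hyp:1}; this is a point the paper leaves implicit, so your version is in fact slightly more careful.
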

\begin{proof}
  The matrix in \eqref{eq:locstab1} is Hurwitz stable if and only if the roots of its characteristic polynomial
  \begin{equation}
    P(s,\alpha):=\det\begin{bmatrix}
    sI-A(p) & -B(p)\\
    -\dfrac{\alpha\mu C(p)}{C(p)A(p)^{-1}B(p)} & s
  \end{bmatrix}=0
  \end{equation}
  are located in the open left half-plane. By virtue of the determinant formula, we have
  \begin{equation}
    P(s,\alpha)=\det(sI-A(p))\det\left(s+\alpha\mu H_n(s,\bar{p}_2)\right)
  \end{equation}
  where $H_n(s)$ is defined as
  \begin{equation}
  H_n(s,\bar{p}_2):=-\dfrac{C(p)(sI-A(p))^{-1}B(p)}{C(p)A(p)^{-1}B(p)}=\dfrac{G(\bar{p}_1)\tilde H(s,\bar{p}_2)}{G(\bar{p}_1)\tilde H(0,\bar{p}_2)}=\dfrac{\tilde H(s,\bar{p}_2)}{\tilde H(0,\bar{p}_2)}
  \end{equation}
  and is independent $\bar{p}_1$. Therefore, the stability is independent of those parameters.
\end{proof}

The above result generalizes the fact that the stability of the positive equilibrium point does not depend on the gain of the controller. In this regard, this controller allows arbitrarily large gain for the system and may be useful for controlling highly sensitive systems. This will be illustrated in the examples.}

\subsection{Computing ${\overline{\alpha}}$}

The following result proposes a numerical method for computing $\bar \alpha$:
\begin{proposition}\label{prop:alpha_bar}
\blue{Let us consider the closed-loop system consisting of the internally positive system \eqref{eq:mainsystL} and the controller \eqref{eq:ic2}. Assume further that the system \eqref{eq:mainsystL} satisfies Assumption \ref{hyp:1} and define the real polynomials $N_R(\omega)$, $D_R(\omega)$, $N_I(\omega)$ and $D_I(\omega)$ as
\begin{equation}
  H_n(j\omega):=-\dfrac{C(sI-A)^{-1}B}{CA^{-1}B}=:\dfrac{N_R(\omega)+jN_I(\omega)}{D_R(\omega)+jD_I(\omega)}
\end{equation}
 and let
 \begin{equation}\label{eq:polycrtic2}
   \Omega:=\left\{\omega>0: Q(\omega):=N_I(\omega)D_I(\omega)+N_R(\omega)D_R(\omega)=0\right\}.
\end{equation}
Then, the equilibrium point \eqref{eq:positive_eqpt} is locally exponentially stable for all $k>0$ and all $\alpha\in(0,\bar\alpha_\infty)$ where
  \begin{equation}
  \bar{\alpha}_\infty:=\inf_{\omega\in\Omega}\dfrac{1}{\mu}\left\{\begin{array}{lcl}
    \dfrac{D_I(\omega)\omega}{N_R(\omega)}&\textnormal{if}&N_R(\omega)\ne0\\
    \dfrac{-D_R(\omega)\omega}{N_I(\omega)}&\textnormal{if}&N_I(\omega)\ne0.
  \end{array}\right.
\end{equation}
When the set $\Omega$ is empty, then $\bar{\alpha}_\infty=\infty$.}
\end{proposition}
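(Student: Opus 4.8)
The plan is to follow the stability-crossing strategy already used for Proposition~\ref{prop:k_bar}, specialized to the single parameter $\alpha$, which enters the relevant polynomial affinely.

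First I would recall from the proof of Proposition~\ref{prop:stab_positive_1} that the Jacobian of the closed-loop system at the positive equilibrium \eqref{eq:positive_eqpt} is $\begin{bmatrix} A & Bk\\ \frac{\alpha C\mu}{CA^{-1}Bk} & 0\end{bmatrix}$, and apply the Schur determinant formula to factor its characteristic polynomial as
\begin{equation*}
P(s,\alpha)=\det(sI-A)\,\bigl(s+\alpha\mu H_n(s)\bigr),
\end{equation*}
where $H_n(s)=-\dfrac{C(sI-A)^{-1}B}{CA^{-1}B}$ and $H_n(0)=1$. Writing $H_n(s)=N(s)/D(s)$ with $D(s)=\det(sI-A)$ and $N(s)=-\dfrac{C\operatorname{adj}(sI-A)B}{CA^{-1}B}$ (so $\deg N\le n-1<n+1=\deg(sD)$), and using that $A$ is Hurwitz so that $D$ has no imaginary roots, local exponential stability of \eqref{eq:positive_eqpt} is, for every $k>0$, equivalent to Hurwitz stability of
\begin{equation*}
\pi(s,\alpha):=sD(s)+\alpha\mu N(s),
\end{equation*}
a polynomial of fixed degree $n+1$ and fixed (monic) leading coefficient for all $\alpha\ge0$, so no root escapes to or enters from infinity as $\alpha$ varies.

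Next I would invoke Proposition~\ref{prop:stab_positive_1}: $\pi(\cdot,\alpha)$ is Hurwitz for all sufficiently small $\alpha>0$. Since the roots of $\pi$ depend continuously on $\alpha$ and the degree is constant, stability can be lost, as $\alpha$ increases, only when a root reaches the imaginary axis at some $s=j\bar\omega$. It cannot reach the origin, because $\pi(0,\alpha)=\alpha\mu N(0)=\alpha\mu D(0)H_n(0)=\alpha\mu\det(-A)\ne0$ for $\alpha>0$. So I look for pairs $(\bar\omega,\bar\alpha)$ with $\bar\omega>0$, $\bar\alpha>0$ and $\pi(j\bar\omega,\bar\alpha)=0$; substituting $N(j\omega)=N_R(\omega)+jN_I(\omega)$, $D(j\omega)=D_R(\omega)+jD_I(\omega)$ and separating real and imaginary parts gives
\begin{equation*}
-\bar\omega D_I(\bar\omega)+\bar\alpha\mu N_R(\bar\omega)=0,\qquad \bar\omega D_R(\bar\omega)+\bar\alpha\mu N_I(\bar\omega)=0,
\end{equation*}
i.e.\ $(\bar\alpha\mu,\bar\omega)^{T}$ lies in the kernel of $\begin{bmatrix} N_R(\bar\omega) & -D_I(\bar\omega)\\ N_I(\bar\omega) & D_R(\bar\omega)\end{bmatrix}$. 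A nonzero kernel vector forces the determinant $N_R(\bar\omega)D_R(\bar\omega)+N_I(\bar\omega)D_I(\bar\omega)=Q(\bar\omega)$ to vanish, so the crossing frequencies form exactly the set $\Omega$ of \eqref{eq:polycrtic2}; and for each $\bar\omega\in\Omega$ the scalar equations return $\bar\alpha=\dfrac{\bar\omega D_I(\bar\omega)}{\mu N_R(\bar\omega)}$ (if $N_R(\bar\omega)\ne0$) or $\bar\alpha=\dfrac{-\bar\omega D_R(\bar\omega)}{\mu N_I(\bar\omega)}$ (if $N_I(\bar\omega)\ne0$), the two agreeing when both are available precisely because $Q(\bar\omega)=0$. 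At least one of $N_R(\bar\omega),N_I(\bar\omega)$ is nonzero, since $N(j\bar\omega)=0$ would force $D(j\bar\omega)=0$ through the two equations, contradicting $A$ Hurwitz.

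Finally I would conclude: the set of $\alpha>0$ at which $\pi(\cdot,\alpha)$ has an imaginary-axis root is the set of the positive values listed above; below their infimum $\bar\alpha_\infty$, no crossing has occurred, so continuity of the roots together with stability at small $\alpha$ shows $\pi(\cdot,\alpha)$ is Hurwitz, i.e.\ \eqref{eq:positive_eqpt} is locally exponentially stable, for all $k>0$ and all $\alpha\in(0,\bar\alpha_\infty)$; and if $\Omega=\emptyset$ (or yields no positive critical value) there is never a crossing, so $\bar\alpha_\infty=\infty$. As a consistency check one notes that the matrix in \eqref{eq:baralpha1} has characteristic polynomial $\det(sI-A)(s+\nu\mu H_n(s))$, so this $\bar\alpha_\infty$ is exactly the threshold already appearing in Proposition~\ref{prop:stab_positive_1}. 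The main obstacle is not the core idea but the careful bookkeeping around degeneracies: ruling out crossings at $s=0$ and at $s=\infty$, handling the cases $N_R(\bar\omega)=0$ or $N_I(\bar\omega)=0$, and discarding frequencies in $\Omega$ that would correspond to $\bar\alpha\le0$ and hence to no admissible crossing.
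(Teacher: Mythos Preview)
Your proposal is correct and follows essentially the same approach as the paper: factor the characteristic polynomial via the Schur determinant formula into $\det(sI-A)\bigl(s+\alpha\mu H_n(s)\bigr)$, then locate the stability-crossing frequencies by setting $\pi(j\omega,\alpha)=0$, separating real and imaginary parts, and requiring the resulting $2\times2$ system to be singular, which yields exactly the condition $Q(\omega)=0$. If anything, you are more careful than the paper's own argument in explicitly ruling out crossings at $s=0$ and at infinity (via the fixed-degree observation) and in handling the degenerate cases $N_R(\bar\omega)=0$ or $N_I(\bar\omega)=0$.
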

\begin{proof}
The matrix in \eqref{eq:locstab1} is Hurwitz stable if and only if the roots of its characteristic polynomial
  \begin{equation}
    P(s,\alpha):=\det\begin{bmatrix}
    sI-A & -B\\
    -\dfrac{\alpha\mu C}{CA^{-1}B} & s
  \end{bmatrix}=0
  \end{equation}
  are located in the open left half-plane. By virtue of the determinant formula, we have
  \begin{equation}\label{eq:P}
    P(s,\alpha)=\det(sI-A)\det\left(s+\alpha\mu H_n(s)\right)=sD(s)+\alpha\mu N_n(s)=0
  \end{equation}
  where $H_n(s)=:N_n(s)/D(s)$ is defined in Proposition \ref{prop:alpha_bar}. Therefore, the above polynomial is Hurwitz stable if and only if $ P(s,\alpha)$ does not have zeros in the closed right-half plane. We use the same approach as for proving Proposition \ref{prop:k_bar} and we view \eqref{eq:P} as a multivariate polynomial. We then look for pairs $(\omega,\alpha)\in\mathbb{R}^2_{>0}$ such that $P(j\omega,\alpha)=0$. Such a pair exists if and only if $\Re[P(j\omega,\alpha)]=0$ and $\Im[P(j\omega,\alpha)]=0$. Expanding those expressions and combining them in matrix form yields
  \begin{equation}
    \begin{bmatrix}
      -\omega D_I(\omega) & N_R(\omega)\\
      \omega D_R(\omega) & N_I(\omega)
    \end{bmatrix}\begin{bmatrix}
      1\\
    \alpha\mu
    \end{bmatrix}=0.
  \end{equation}
  Such a vector exists if and only if the matrix is singular and, therefore, if and only if $\omega Q(\omega)=0$. Since, we do not consider the zero frequency which corresponds to the case $\bar \alpha=0$, we are just left with the condition that $Q(\omega)=0$. The result then follows.
\end{proof}
Unlike the antithetic integral controller, the stability conditions depend on the desired set-point and one cannot make the equilibrium stable for any arbitrarily large set-point for a given $\alpha$. However, the stability of the equilibrium will not depend on the gain $k$ as well as all the parameters exclusively appearing in the DC-gain of the system. This is a consequence of the fact that the stability condition depends on the normalized transfer function. This will be illustrated in the examples.

\subsection{Disturbance rejection/Perfect adaptation}

We characterize here the disturbance rejection properties of the exponential controller. Let us start with the analysis of the zero equilibrium point:
\begin{proposition}
  Assume that $d\in\mathcal{D}_\mu$, then the equilibrium point
  \begin{equation}
    \left(x^*,v^*\right)=\left(-A^{-1}Ed,0\right)
  \end{equation}
  is unstable for all $\alpha,k,\mu>0$.
\end{proposition}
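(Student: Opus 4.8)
The plan is to mimic the argument of Proposition \ref{prop:stab_zero_1}, the only new ingredient being that the disturbance shifts the $x$-component of the equilibrium and modifies the scalar coefficient governing the $v$-dynamics. First I would verify that the stated point is indeed an equilibrium of the disturbed closed-loop system \eqref{eq:mainsystLdist}-\eqref{eq:ic2}: setting $v^*=0$ forces $u^*=kv^*=0$, so the $x$-equation reduces to $Ax^*+Ed=0$, i.e. $x^*=-A^{-1}Ed$, which is well defined since $A$ is Hurwitz (hence invertible) by Assumption \ref{hyp:1}. The $v$-equation $0=\alpha v^*(\mu-Cx^*)$ is trivially satisfied at $v^*=0$.

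Next I would compute the Jacobian linearization about $(x^*,v^*)=(-A^{-1}Ed,0)$. Writing $\tilde x=x-x^*$ and $\tilde v=v$, the $x$-dynamics linearize to $\dot{\tilde x}=A\tilde x+Bk\tilde v$. For the controller equation $\dot v=\alpha v(\mu-Cx)$, the partial derivative with respect to $x$ vanishes at $v=0$, while the partial derivative with respect to $v$ equals $\alpha(\mu-Cx^*)$. Since $Cx^*=-CA^{-1}Ed$, this coefficient is $\alpha(\mu+CA^{-1}Ed)$. Thus the linearized system matrix is
\begin{equation}
  \begin{bmatrix}
    A & Bk\\
    0 & \alpha(\mu+CA^{-1}Ed)
  \end{bmatrix},
\end{equation}
which is block upper-triangular, so its spectrum is $\sigma(A)\cup\{\alpha(\mu+CA^{-1}Ed)\}$.

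Finally I would invoke the hypothesis $d\in\mathcal{D}_\mu$, which by the definition \eqref{eq:calD} means exactly $\mu+CA^{-1}Ed>0$; since $\alpha>0$, the extra eigenvalue $\alpha(\mu+CA^{-1}Ed)$ lies strictly in the open right half-plane, and therefore the equilibrium point is unstable regardless of the values of $\alpha,k,\mu>0$. There is no real obstacle here: the computation is routine and the only thing to be careful about is reading off the sign of $\mu+CA^{-1}Ed$ correctly from the admissibility set $\mathcal{D}_\mu$, and noting that the gain $k$ appears only in the (irrelevant) off-diagonal block so it plays no role in the conclusion.
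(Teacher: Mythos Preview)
Your proposal is correct and follows exactly the approach the paper intends: the paper's proof simply says ``the proof is identical to the one of Proposition \ref{prop:stab_zero_1}'', i.e.\ linearize about the equilibrium, read off the block upper-triangular Jacobian, and observe that the scalar eigenvalue in the $(2,2)$-block is strictly positive. You have in fact been more careful than the paper by computing that this eigenvalue is $\alpha(\mu+CA^{-1}Ed)$ rather than $\alpha\mu$, and by explicitly invoking the definition \eqref{eq:calD} of $\mathcal{D}_\mu$ to ensure its positivity.
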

\begin{proof}
The proof is identical to the one of Proposition \ref{prop:stab_zero_1}.
\end{proof}

The following result states stability conditions for the positive equilibrium point:
\begin{proposition}
  Let $\mu>0$ be given and assume that $d\in\mathcal{D}_\mu$, then the equilibrium point
  \begin{equation}
(x^*,v^*)=\left(A^{-1}\left(\dfrac{B(\mu+CA^{-1}Ed)}{CA^{-1}B}-Ed\right),-\dfrac{\mu+CA^{-1}Ed}{CA^{-1}Bk}\right)
  \end{equation}
  is locally exponentially stable for all $k>0$ and any $\alpha\in(0,\bar{\alpha}_d)$ where
  \begin{equation}
     \bar{\alpha}^d_\infty=\dfrac{\mu}{\mu+CA^{-1}Ed}\bar{\alpha}_\infty\ge\bar{\alpha}_\infty
  \end{equation}
  where $\bar{\alpha}_\infty$ is defined in Proposition \ref{prop:alpha_bar}. Moreover, if $\alpha<\bar{\alpha}_\infty$, then the equilibrium point is locally exponentially stable for all $d\in\mathcal{D}_\mu$ and all $k>0$.
\end{proposition}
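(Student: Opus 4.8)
The plan is to reduce the claim to the stability result already established for the undisturbed positive equilibrium point (Proposition \ref{prop:stab_positive_1}, with the computable bound from Proposition \ref{prop:alpha_bar}) by noticing that the Jacobian of the disturbed closed loop coincides, up to a single rescaling of the exponential rate $\alpha$, with the Jacobian of the undisturbed closed loop. First I would check that the stated pair $(x^*,v^*)$ is indeed an equilibrium: setting $\dot v=0$ with $v^*\neq0$ forces $Cx^*=\mu$, and then $Ax^*+Bkv^*+Ed=0$ together with $Cx^*=\mu$ yields $kv^*=-(\mu+CA^{-1}Ed)/(CA^{-1}B)$ and the displayed $x^*$; since $d\in\mathcal{D}_\mu$ gives $\mu+CA^{-1}Ed>0$ and $-CA^{-1}B>0$, we get $v^*>0$, so this equilibrium is genuinely admissible.

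Next I would linearize \eqref{eq:mainsystLdist}-\eqref{eq:ic2} about $(x^*,v^*)$. Writing $\tilde x=x-x^*$, $\tilde v=v-v^*$, the constant term $Ed$ cancels against $Ax^*+Bkv^*+Ed=0$, so $\dot{\tilde x}=A\tilde x+Bk\tilde v$, exactly as in the disturbance-free case; and differentiating $\alpha v(\mu-Cx)$ at $(x^*,v^*)$ and using $Cx^*=\mu$ kills the $\partial_v$-term, leaving $\dot{\tilde v}=-\alpha v^*C\tilde x$. Substituting $v^*=-(\mu+CA^{-1}Ed)/(CA^{-1}Bk)$, the Jacobian is
\[
\Phi_d(\alpha)=\begin{bmatrix} A & Bk\\[2mm] \dfrac{\alpha(\mu+CA^{-1}Ed)C}{CA^{-1}Bk} & 0\end{bmatrix}.
\]
The key observation is then immediate: $\Phi_d(\alpha)$ is precisely the matrix in \eqref{eq:locstab1} with $\alpha$ replaced by $\alpha':=\alpha(\mu+CA^{-1}Ed)/\mu$. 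By Proposition \ref{prop:alpha_bar}, the latter matrix is Hurwitz — equivalently the equilibrium is locally exponentially stable — whenever $\alpha'\in(0,\bar\alpha_\infty)$. Since $\mu+CA^{-1}Ed>0$, this is equivalent to $\alpha<\bar\alpha_\infty\,\mu/(\mu+CA^{-1}Ed)=\bar\alpha^d_\infty$, giving the first assertion, uniformly in $k>0$ (the gain enters $\Phi_d(\alpha)$ only through the rank-one structure already exploited in Proposition \ref{prop:stab_positive_1}).

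Finally, for the ordering $\bar\alpha^d_\infty\ge\bar\alpha_\infty$ and the ``Moreover'' part, I would invoke that $A$ is Metzler and Hurwitz, hence $A^{-1}\le0$ entrywise; with $C,E\ge0$ and $d\ge0$ this gives $CA^{-1}Ed\le0$, so $0<\mu+CA^{-1}Ed\le\mu$ and therefore $\bar\alpha^d_\infty=\bar\alpha_\infty\,\mu/(\mu+CA^{-1}Ed)\ge\bar\alpha_\infty$. In particular, if $\alpha<\bar\alpha_\infty$ then $\alpha<\bar\alpha^d_\infty$ for every $d\in\mathcal{D}_\mu$, so local exponential stability holds simultaneously for all such $d$ and all $k>0$. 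I do not expect a real obstacle here; the only points needing care are verifying that $Ed$ drops out of the linearization so that the disturbance appears only through $v^*$ in the $(2,1)$ block, and keeping the sign bookkeeping consistent — $d\in\mathcal{D}_\mu$ must be used both for positivity of $v^*$/nonemptiness of the $\alpha$-interval and, via $A^{-1}\le0$, for the monotone enlargement $\bar\alpha^d_\infty\ge\bar\alpha_\infty$.
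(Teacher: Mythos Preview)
Your proposal is correct and follows essentially the same approach as the paper: linearize about the disturbed equilibrium, observe that the constant disturbance $Ed$ drops out and enters the Jacobian only through $v^*$ in the $(2,1)$ block, and then rescale $\alpha$ to reduce to the disturbance-free bound $\bar\alpha_\infty$, using $A^{-1}\le0$ to get $v^*(d)\le v^*(0)$ and hence $\bar\alpha^d_\infty\ge\bar\alpha_\infty$. Your version is simply more explicit about verifying the equilibrium and carrying out the substitution $\alpha'=\alpha(\mu+CA^{-1}Ed)/\mu$ than the paper's terse sketch.
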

\begin{proof}
 As we have seen before, the equilibrium value of the state does not change the local linear system. Only the equilibrium value of the state of the integrator matters. Noting that $v^*(d)=v^*(0)+\Delta_vd$ where
 \begin{equation}
  \begin{array}{rcl}
    v^*(0)&=&\dfrac{-\mu}{CA^{-1}Bk}>0\\
    \Delta_v&=&-\dfrac{CA^{-1}E}{CA^{-1}Bk}\le0
  \end{array}
\end{equation}
allows us to conclude that $v^*(d)\le v^*(0)$ for all $d\in\mathcal{D}_\mu$. In this case, the state matrix of the linearized system becomes
\begin{equation}
  \begin{bmatrix}
    A & Bk\\
    -\alpha v^*(d)C & 0
  \end{bmatrix}
\end{equation}
where we can observe that  $\alpha v^*(d)C$ is a decreasing function of $d$ and, therefore, the presence of the disturbance tends to decrease the loop gain. From the disturbance-free stability result, we can see that it is enough to rescale the value for $\bar \alpha_\infty$ to get the new bound, This completes the proof.
\end{proof}

\subsection{Examples}

\blue{\textbf{Gene expression.} We consider again the gene expression system from Section \ref{subsec:ex}. In this case, we have that
\begin{equation}
  H_n(s)=\dfrac{\gamma_1\gamma_2}{(s+\gamma_1)(s+\gamma_2)}.
\end{equation}
Since the system has a relative degree equal to two, then $\bar\alpha_\infty$ is finite. The polynomial $Q(\omega)$ is given for this system by
  \begin{equation}
    Q(\omega)=\gamma_1\gamma_2(\omega^2-\gamma_1\gamma_2)
  \end{equation}
  and, therefore, the only positive root is $\sqrt{\gamma_1\gamma_2}$. We can verify that
  \begin{equation}
    H_n(j\sqrt{\gamma_1\gamma_2})=\dfrac{-j\sqrt{\gamma_1\gamma_2}}{\gamma_1+\gamma_2}
  \end{equation}
  and is located on the imaginary axis. We then get that
  \begin{equation}
    \bar\alpha_\infty=\dfrac{\gamma_1+\gamma_2}{\mu}.
  \end{equation}
  We can see that the stability condition does not depend on $k_2$ as expected from Theorem \ref{th:robustgain}. As a result, the equilibrium point will be stable for all $k_2>0$, this a clear difference with the antithetic integral controller which requires that $k<\bar{k}_\infty=\gamma_1\gamma_2(\gamma_1+\gamma_2)/k_2$ is the strong binding regime. Clearly, if $k_2$ is much larger than $\gamma_1\gamma_2(\gamma_1+\gamma_2)$, the stability of the system will not be robust with respect to the controller gain. The exponential controller allows one to overcome this situation. Minor drawback of the integral controller is that the exponential rate $\alpha$ needs to be tuned with an a priori maximum value for the reference and its nonlinear behavior.

  For numerical purposes, let us consider the parameters $\gamma_1=1$ h$^{-1}$, $\gamma_2=1$ h$^{-1}$, $k_2=1$ h$^{-1}$ as well as $k=1$. We obtain the results depicted in Figure \ref{fig:gene_exp_alpha_y} and Figure \ref{fig:gene_exp_alpha_v} where we can see the influence of $\alpha$ on the dynamics of the closed-loop system. The robustness with respect to the gain of the system is illustrated for various values for $k_2$ is illustrated in Figure \ref{fig:gene_exp_k2_y}.}

  %
%

\begin{figure}[H]
  \centering
  \includegraphics[width=0.7\textwidth]{./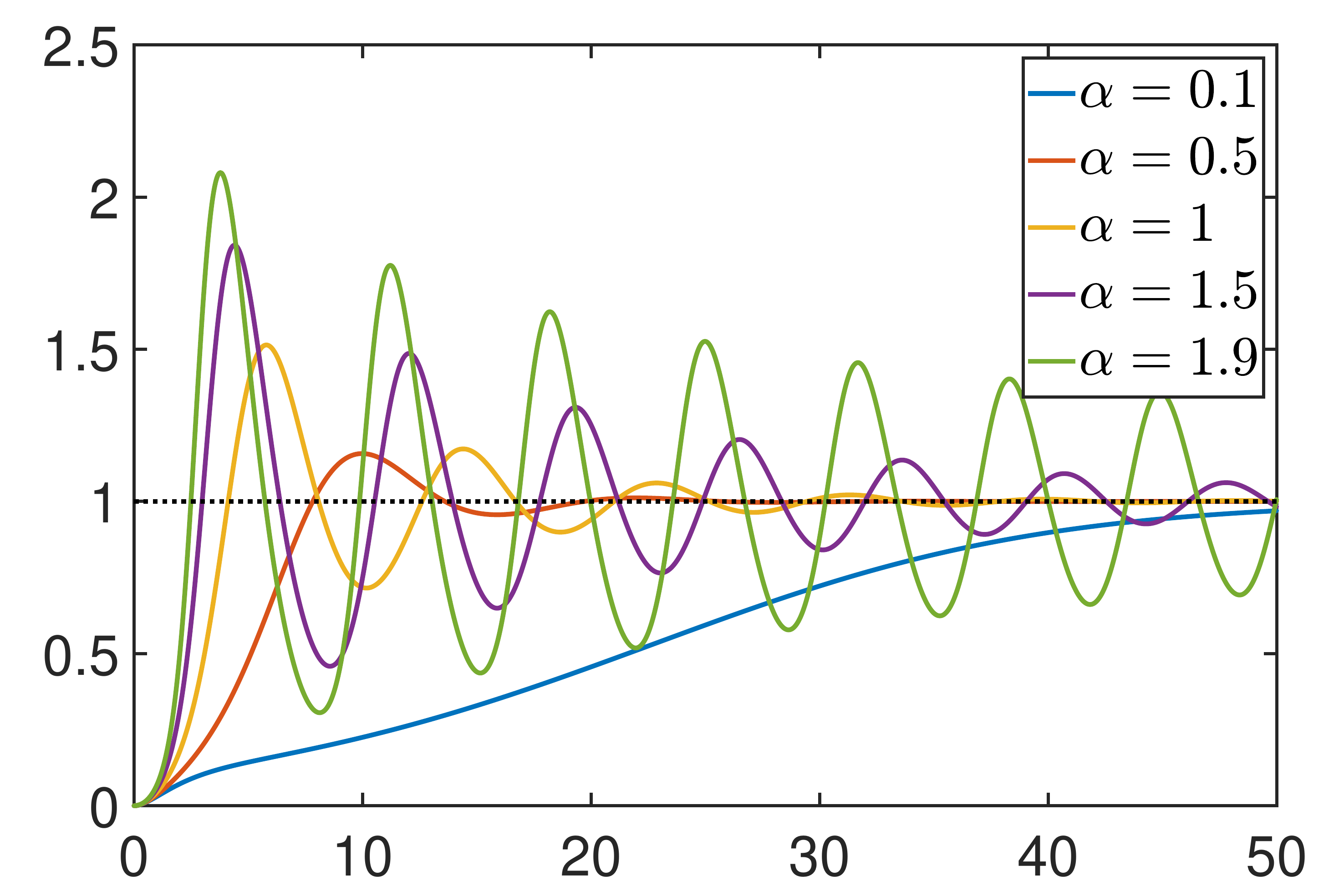}
  \caption{Output of the controlled gene expression system consisting of the system \eqref{eq:geneexpression} and the exponential controller \eqref{eq:ic2} for various values for $\alpha$.}\label{fig:gene_exp_alpha_y}
\end{figure}

\begin{figure}[H]
  \centering
  \includegraphics[width=0.7\textwidth]{./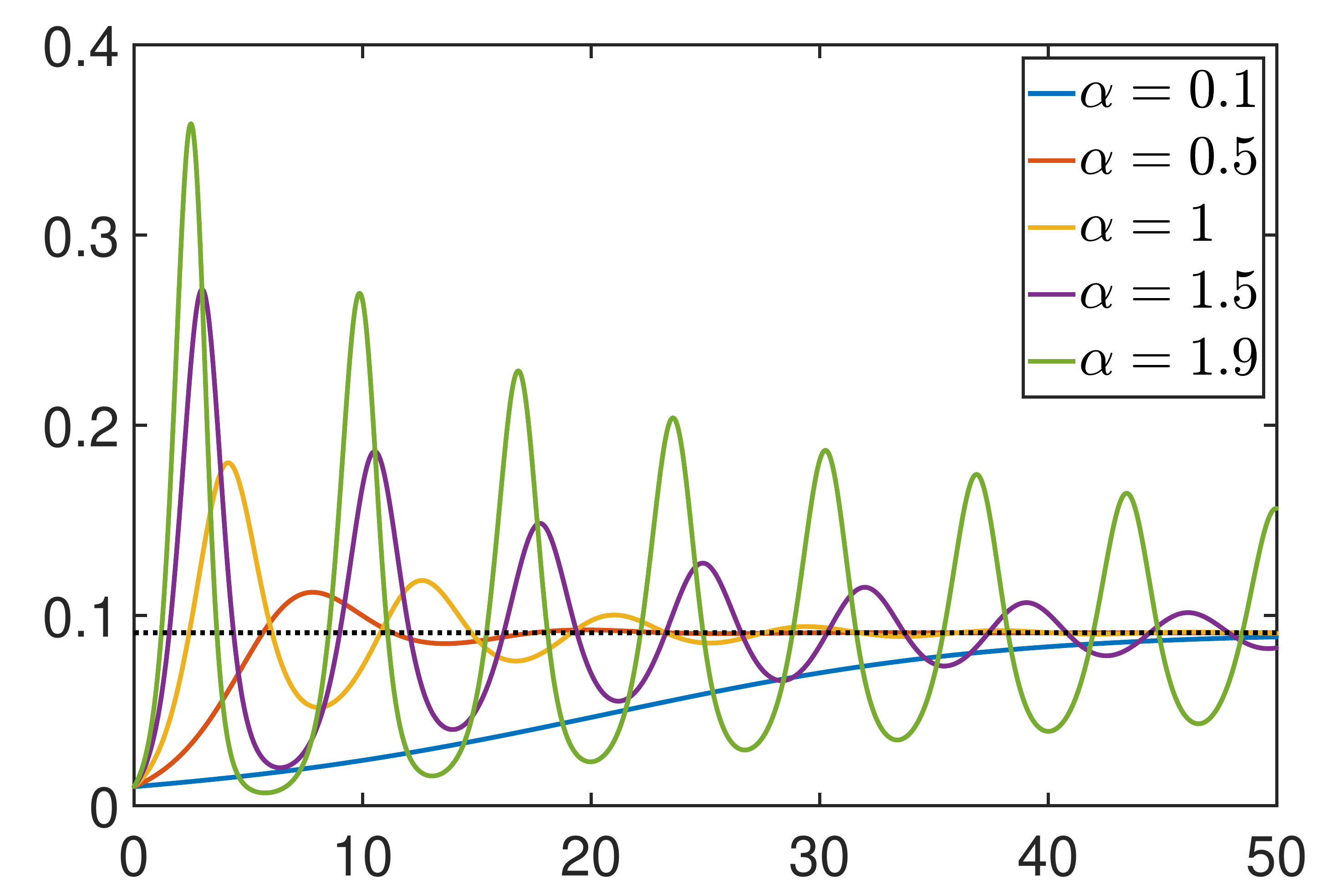}
  \caption{State of the exponential controller \eqref{eq:ic2} in closed loop with the gene expression system consisting of the system \eqref{eq:geneexpression} for various values for $\alpha$.}\label{fig:gene_exp_alpha_v}
\end{figure}

\begin{figure}[H]
  \centering
  \includegraphics[width=0.7\textwidth]{./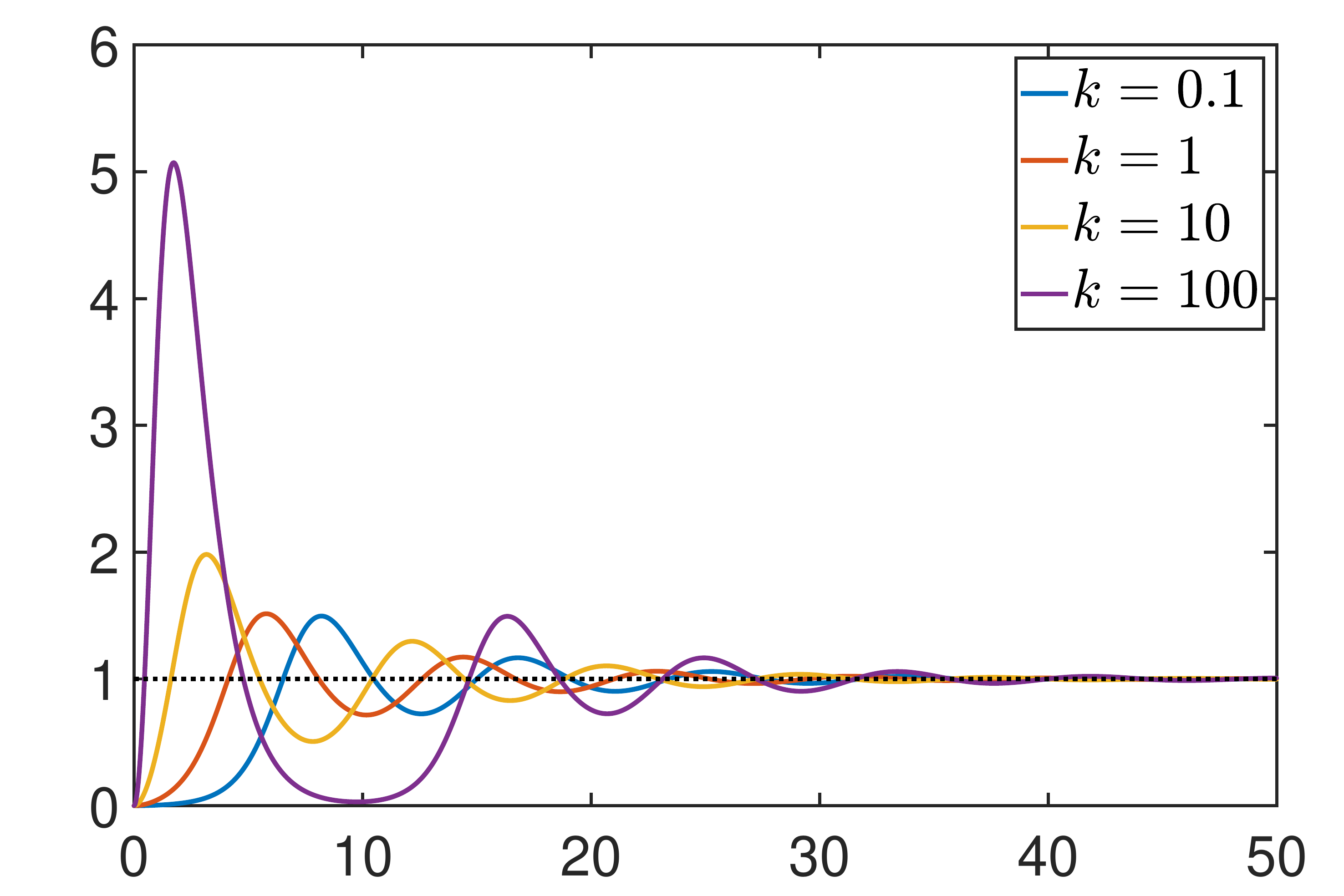}
  \caption{Output of the controlled gene expression system consisting of the system \eqref{eq:geneexpression} and the exponential controller \eqref{eq:ic2} for various values for $k_2$ and for $k=1$, $\alpha=0.5$.}\label{fig:gene_exp_k2_y}
\end{figure}

\textbf{Gene expression with protein maturation.} Let us consider the gene expression system with protein maturation
  \begin{equation}\label{eq:example_th}
    \begin{array}{lcl}
      \dot{x}_1(t)&=&k_1-\gamma_1x_1(t)\\
      \dot{x}_2(t)&=&k_2x_1(t)-(\gamma_2+k_3)x_2(t)\\
      \dot{x}_3(t)&=&k_3x_2(t)-\gamma_3x_3(t).
    \end{array}
  \end{equation}
where $x_1,x_2$ and $x_3$ denote the concentration of mRNA, protein and maturated protein molecules, respectively. We choose the transcription rate as the control input and the controlled output is the concentration of maturated proteins. This makes the system linear, stable and positive. Its normalized transfer function $H_n(s)$ is given by
\begin{equation}
  H_n(s)=\dfrac{\gamma_1\gamma_3(\gamma_2+k_3)}{(s+\gamma_1)(s+\gamma_2+k_3)(s+\gamma_3)}
\end{equation}
which is readily seen to be independent of $k_2$. Since the above transfer function has relative degree 3, then $\bar\alpha_\infty$ is finite. The associated polynomial $Q(\omega)$ is given by
  \begin{equation}
    Q(\omega)=\gamma_1\gamma_3(\gamma_2+k_3)-\omega^2(\gamma_1 + \gamma_2 + \gamma_3 + k_3)
  \end{equation}
  from which we can conclude that $Q(\bar\omega)=0$ with $\bar\omega=\sqrt{\dfrac{\gamma_1\gamma_3(\gamma_2+k_3)}{\gamma_1 + \gamma_2 + \gamma_3 + k_3}}$. This finally yields the bound
  \begin{equation}
    \bar{\alpha}_\infty=\dfrac{(\gamma_1 + \gamma_3)(\gamma_1 + \gamma_2 + k_3)(\gamma_2 + \gamma_3 + k_3)}{\mu(\gamma_1 + \gamma_2 + \gamma_3 + \gamma_3)^2}.
  \end{equation}

\section{Logistic integral control of linear positive systems}\label{sec:logistic_linear}

\blue{We consider here a saturated version of the exponential integral controller, namely the logistic integral controller. First the model of the logistic integral controller is given. The local stability of the different equilibrium points is then analyzed. Finally, a way to compute the maximum value of the exponential rate is provided.}

\subsection{The model}

The model of the logistic integral controller is based on the use of the logistic function
\begin{equation}
  \varphi(x)=\frac{\beta }{1+e^{-\alpha x}},\alpha,\beta>0
\end{equation}
as regularizing function. This leads to the following result:
\begin{proposition}
  The logistic integral controller is given by
  \begin{equation}\label{eq:ic3}
  \begin{array}{lcl}
        \dot{v}(t)&=&\dfrac{\alpha}{\beta}v(t)(\beta-v(t))(\mu-y(t))\\
        u(t)&=&kv(t)
  \end{array}
  \end{equation}
  where $k>0$ is the gain of the controller, $\alpha>0$ is the exponential rate of the controller and $\beta>0$ is the \emph{saturation bound} of the controller. The closed-form solution of the above system is given by $$\displaystyle v(t)=\dfrac{\beta}{\displaystyle 1+v_0\exp\left(\alpha\int_0^t[\mu-y(s)]ds\right)}.$$
\end{proposition}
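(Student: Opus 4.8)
The plan is to obtain this Proposition as an immediate instance of the general regularized-integrator result (the Proposition in Section~\ref{sec:linpos2}), applied with the logistic regularizing function $\varphi(x)=\beta/(1+e^{-\alpha x})$ and input signal $w(t)=\mu-y(t)$. First I would check that $\varphi$ is indeed a positively regularizing function in the sense of the Definition in Section~\ref{sec:linpos2}: it is $C^{1}$ on all of $\mathbb{R}$, strictly increasing since $\varphi'(x)=\alpha\beta e^{-\alpha x}/(1+e^{-\alpha x})^{2}>0$, nonnegative, and maps $\mathbb{R}$ bijectively onto the connected interval $(0,\beta)$; in particular it is invertible, which is exactly what the general result needs. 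Consequently the regularized integrator $\dot I=w$, $v=\varphi(I)$ admits the dynamical representation $\dot v=\varphi'(\varphi^{-1}(v))\,w$.

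The core of the argument is then the algebraic identity $\varphi'(\varphi^{-1}(v))=\tfrac{\alpha}{\beta}v(\beta-v)$ for $v\in(0,\beta)$, which is precisely the ``Logistic'' entry of Table~\ref{tab:driving}. To establish it I would invert $v=\beta/(1+e^{-\alpha x})$ to get $e^{-\alpha x}=(\beta-v)/v$ and $1+e^{-\alpha x}=\beta/v$, substitute these into the expression for $\varphi'(x)$, and simplify: the $(\beta/v)^{2}$ in the denominator combines with the numerator to leave $\tfrac{\alpha}{\beta}v(\beta-v)$. Setting $w=\mu-y$ then gives the stated equation $\dot v=\tfrac{\alpha}{\beta}v(\beta-v)(\mu-y)$, and $u=kv$ is merely the controller output map, which settles the first claim.

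For the closed-form solution I would argue in two equivalent ways. Directly, from $v=\varphi(I)$ with $I(t)=I_{0}+\int_{0}^{t}[\mu-y(s)]\,ds$, substituting into $\varphi$ yields $v(t)=\beta/\bigl(1+e^{-\alpha I_{0}}e^{-\alpha\int_{0}^{t}[\mu-y(s)]ds}\bigr)$, and absorbing the initial-condition factor into a single positive constant recovers the displayed expression. More transparently at the level of the scalar ODE, I would use the ``logit'' substitution $\sigma:=v/(\beta-v)$, under which (via partial fractions) the equation collapses to $\dot\sigma/\sigma=\alpha(\mu-y)$, hence $\sigma(t)=\sigma(0)\exp\bigl(\alpha\int_{0}^{t}[\mu-y(s)]ds\bigr)$, and solving $v=\beta\sigma/(1+\sigma)$ for $v$ gives the same closed form. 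Along the way it is worth noting that $v(t)$ automatically remains in $(0,\beta)$ for all $t$, since $v=\varphi(I)$ and $\varphi$ is $(0,\beta)$-valued.

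I do not expect a genuine obstacle: the statement is a direct specialization of the already-proved regularized-integrator proposition together with a one-line partial-fraction/substitution computation. The only point deserving a little care is purely a matter of bookkeeping — identifying the integration constant (and the orientation of the exponent) consistently with $v(0)$ and with the $v_{0}$ used in the companion exponential-controller formula — rather than anything mathematically delicate.
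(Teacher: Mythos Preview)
Your approach is exactly the one the paper has in mind: the proposition is stated without proof, as an immediate specialization of the regularized-integrator result in Section~\ref{sec:linpos2} together with the ``Logistic'' row of Table~\ref{tab:driving}, and your verification of $\varphi'(\varphi^{-1}(v))=\tfrac{\alpha}{\beta}v(\beta-v)$ and the logit/partial-fraction integration is the natural way to fill in the details.

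Your closing remark about bookkeeping is well placed and is in fact more than bookkeeping: carrying your own computation through gives
\[
v(t)=\dfrac{\beta}{1+\dfrac{\beta-v(0)}{v(0)}\exp\!\left(-\alpha\int_0^t[\mu-y(s)]\,ds\right)},
\]
with a \emph{minus} sign in the exponent, whereas the displayed formula in the paper carries a plus sign. If one reads $v_0$ there as $v(0)$ (as in the exponential-controller formula) the paper's expression neither satisfies the initial condition nor the ODE, so your instinct that the constant and the orientation of the exponent need to be reconciled is correct --- trust your derivation rather than the printed formula.
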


This model can be viewed as an extension of the exponential model and allows to capture for the saturation at the value $\beta$. This is the reason why we get the additional term $\beta-v(t)$ in the controller dynamical expression. . It is important to note here that this controller is only valid when the local DC-gain of the open-loop system is positive. When it is negative, the exponential integral controller then takes the form $$\dfrac{\alpha}{\beta}v(t)(\beta-v(t))(y(t)-\mu).$$


\subsection{Local stability analysis}

The use of a logistic integral controller yields a closed-loop system with three equilibrium points:
\begin{proposition}
  The closed-loop system \blue{consisting of the system \eqref{eq:mainsystL} and the controller \eqref{eq:ic3}} admits three equilibrium points:
  \begin{enumerate}[label=({\alph*})]
    \item the first one is the zero equilibrium point
    \begin{equation}\label{eq:trivial_eqpt_rat}
      (x^*,v^*)=(0,0)
    \end{equation}
    \item the second one is the saturating equilibrium point
    \begin{equation}\label{eq:saturating_eqpt_rat}
        (x^*,z^*)=\left(-A^{-1}Bk\beta,\beta\right)
    \end{equation}
  \item the third one is the positive equilibrium point
  \begin{equation}\label{eq:positive_eqpt_rat}
     (x^*,z^*)=\left(\dfrac{A^{-1}B\mu}{CA^{-1}B},\dfrac{-\mu}{CA^{-1}Bk}\right)
  \end{equation}
  which exists provided that $0<\mu\le-CA^{-1}Bk\beta$.
  \end{enumerate}
\end{proposition}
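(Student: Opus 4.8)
The plan is to set both time derivatives in the closed-loop equations \eqref{eq:mainsystL}--\eqref{eq:ic3} to zero and solve the resulting algebraic system. From $\dot x = Ax + Bu = Ax + Bkv = 0$ and the invertibility of $A$ (guaranteed by Assumption \ref{hyp:1}), every equilibrium must satisfy $x^* = -A^{-1}Bkv^*$, so the entire equilibrium set is parametrized by the scalar $v^*$. Substituting $y^* = Cx^* = -CA^{-1}Bkv^*$ into the controller equation gives the single scalar condition
\begin{equation*}
  \frac{\alpha}{\beta}\,v^*(\beta - v^*)\left(\mu + CA^{-1}Bk\,v^*\right) = 0,
\end{equation*}
and since $\alpha,\beta>0$ this forces $v^* \in \{\,0,\ \beta,\ v_+^*\,\}$ with $v_+^* := -\mu/(CA^{-1}Bk)$.

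First I would treat $v^* = 0$, which gives $x^* = 0$ and hence the zero equilibrium \eqref{eq:trivial_eqpt_rat}. Next, $v^* = \beta$ yields $x^* = -A^{-1}Bk\beta$, the saturating equilibrium \eqref{eq:saturating_eqpt_rat}. Finally, the root $v^* = v_+^*$ of the third factor is strictly positive precisely because $CA^{-1}B<0$ by Proposition \ref{prop:dsdsds} and $\mu,k>0$; substituting back gives $x^* = -A^{-1}Bk\,v_+^* = A^{-1}B\mu/(CA^{-1}B)$, i.e.\ \eqref{eq:positive_eqpt_rat}. This already exhausts the three cases.

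The only point requiring a little care is the admissibility clause for the third equilibrium. From the sign structure of $v(\beta - v)$ in \eqref{eq:ic3} the interval $[0,\beta]$ is invariant for $v$ (this is also visible from the closed-form solution, which stays in $(0,\beta)$), so the relevant state space for the controller variable is $[0,\beta]$ and the positive branch is a genuine equilibrium only when $v_+^*\in(0,\beta]$. Rewriting $0 < -\mu/(CA^{-1}Bk) \le \beta$ is equivalent to $0 < \mu \le -CA^{-1}Bk\beta$, the stated condition, with $v_+^* = \beta$ at the upper endpoint so that the positive and saturating equilibria coincide there and are distinct strictly below it. I expect no genuine obstacle here — the argument is a direct computation — the only mild subtlety being the invariance remark that justifies discarding the out-of-range candidates.
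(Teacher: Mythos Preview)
Your argument is correct and is exactly the direct computation one would expect: solve $\dot x=0$ for $x^*$ in terms of $v^*$, substitute into $\dot v=0$, and read off the three roots of the resulting cubic factor $v^*(\beta-v^*)(\mu+CA^{-1}Bk\,v^*)$. The paper in fact states this proposition without proof, so there is nothing to compare against; your write-up supplies precisely the routine verification the authors omitted, including the invariance remark that pins down the admissibility condition $0<\mu\le -CA^{-1}Bk\beta$.
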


Similarly to as in the previous section, the zero and saturating equilibrium points are undesirable equilibrium points as they do not achieve tracking for the constant set-point for the controlled output. We prove below that the zero equilibrium point is unstable:
\begin{proposition}
 \blue{Let us consider the closed-loop system consisting of the internally positive system \eqref{eq:mainsystL} and the controller \eqref{eq:ic3}}. Assume further that the system \eqref{eq:mainsystL} satisfies Assumption \ref{hyp:1} and that $\mu\in(0,-CA^{-1}Bk\beta)$. Then the zero-equilibrium point \eqref{eq:trivial_eqpt_rat} is unstable for all $\alpha,\beta,k>0$.
\end{proposition}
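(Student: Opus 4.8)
The plan is to invoke Lyapunov's indirect (first) method: I will linearize the closed-loop vector field at $(x^*,v^*)=(0,0)$ and exhibit an eigenvalue of the Jacobian with strictly positive real part. Writing the closed-loop dynamics as $\dot{x}=Ax+Bkv$ and $\dot{v}=\frac{\alpha}{\beta}v(\beta-v)(\mu-Cx)$, the first block is already linear and contributes the rows $\begin{bmatrix}A & Bk\end{bmatrix}$. For the second block, set $g(x,v):=\frac{\alpha}{\beta}v(\beta-v)(\mu-Cx)$; since $g$ carries an overall factor $v$, the partial derivative $\partial g/\partial x=-\frac{\alpha}{\beta}v(\beta-v)C$ vanishes at $v=0$, while $\partial g/\partial v=\frac{\alpha}{\beta}(\beta-2v)(\mu-Cx)$ evaluates to $\alpha\mu$ at the origin.

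Therefore the linearized system is
\begin{equation}
\begin{bmatrix}\dot{\tilde{x}}(t)\\ \dot{\tilde{v}}(t)\end{bmatrix}=\begin{bmatrix} A & Bk\\ 0 & \alpha\mu\end{bmatrix}\begin{bmatrix}\tilde{x}(t)\\ \tilde{v}(t)\end{bmatrix},
\end{equation}
which is exactly the Jacobian obtained in the exponential case (cf. Proposition \ref{prop:stab_zero_1}). Being block upper-triangular, its spectrum is $\sigma(A)\cup\{\alpha\mu\}$, and since $\alpha,\mu>0$ it contains the positive eigenvalue $\alpha\mu$. Hence the Jacobian is not Hurwitz and possesses an eigenvalue in the open right half-plane, so the zero-equilibrium point \eqref{eq:trivial_eqpt_rat} is unstable for every $\alpha,\beta,k>0$.

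A remark worth making is that the assumptions ``$A$ Hurwitz'' and ``$\mu\in(0,-CA^{-1}Bk\beta)$'' are not actually used in this argument; they only guarantee that the saturating and positive equilibria are well-defined, so the instability of the zero equilibrium is in fact unconditional. As for obstacles, there are essentially none: the only point deserving a line of care is that the cross-derivative $\partial g/\partial x$ vanishes at the origin because of the multiplicative $v$ factor characteristic of the logistic regularization (the same factor that makes $(0,0)$ an equilibrium in the first place); everything else is the textbook spectral instability criterion for a hyperbolic fixed point.
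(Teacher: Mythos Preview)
Your proof is correct and follows essentially the same approach as the paper: linearize at $(0,0)$, obtain the block upper-triangular Jacobian $\begin{bmatrix}A & Bk\\ 0 & \alpha\mu\end{bmatrix}$, and conclude instability from the eigenvalue $\alpha\mu>0$. Your write-up is in fact slightly more detailed than the paper's, which simply states the linearized matrix and observes that $\alpha\mu>0$.
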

\begin{proof}
The linearized system about that equilibrium point is given by
\begin{equation}
  \begin{bmatrix}
    \dot{\tilde{x}}(t)\\
    \dot{\tilde{v}}(t)
  \end{bmatrix}=\begin{bmatrix}
    A & Bk\\
    0 & \alpha\mu
  \end{bmatrix} \begin{bmatrix}
    \tilde{x}(t)\\
    \tilde{z}(t)
  \end{bmatrix}.
\end{equation}
Since $\alpha\mu>0$, then the equilibrium point is unstable for any $\alpha,\beta,\mu,k>0$.
\end{proof}

We prove below that the saturating equilibrium point is unstable:
\begin{proposition}
 \blue{Let us consider the closed-loop system consisting of the internally positive system \eqref{eq:mainsystL} and the controller \eqref{eq:ic3}}. Assume further that the system \eqref{eq:mainsystL} satisfies Assumption \ref{hyp:1} and that $\mu\in(0,-CA^{-1}Bk\beta)$.  Then, the zero-equilibrium point \eqref{eq:saturating_eqpt_rat} is unstable for all $\alpha,\beta,k>0$.
\end{proposition}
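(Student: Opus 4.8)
The plan is to mirror the linearization argument already used for the zero equilibrium point in the preceding propositions. First I would write the closed loop explicitly: with $u=kv$, the system \eqref{eq:mainsystL}--\eqref{eq:ic3} reads $\dot x=Ax+Bkv$ and $\dot v=\tfrac{\alpha}{\beta}v(\beta-v)(\mu-Cx)$. I would then introduce $\tilde x=x-x^*$, $\tilde v=v-v^*$ around the saturating equilibrium $(x^*,v^*)=(-A^{-1}Bk\beta,\beta)$ of \eqref{eq:saturating_eqpt_rat} and compute the Jacobian of the closed-loop vector field there.

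The one computation that matters is the partial derivative of the controller vector field. Its derivative with respect to $x$ equals $\tfrac{\alpha}{\beta}v(\beta-v)(-C)$, which vanishes at $v^*=\beta$ because of the factor $(\beta-v)$; therefore the linearized system is block upper triangular,
\[
\begin{bmatrix}\dot{\tilde x}\\ \dot{\tilde v}\end{bmatrix}
=\begin{bmatrix} A & Bk\\ 0 & \delta\end{bmatrix}
\begin{bmatrix}\tilde x\\ \tilde v\end{bmatrix},
\qquad
\delta:=\frac{\alpha}{\beta}(\beta-2v^*)(\mu-Cx^*).
\]
Substituting $v^*=\beta$ and $Cx^*=-CA^{-1}Bk\beta$ gives $\delta=-\alpha\bigl(\mu+CA^{-1}Bk\beta\bigr)$.

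Next I would invoke $CA^{-1}B<0$ from Proposition~\ref{prop:dsdsds}, so that $-CA^{-1}Bk\beta>0$, together with the standing hypothesis $\mu\in(0,-CA^{-1}Bk\beta)$, to conclude that $\delta>0$. Since the spectrum of a block triangular matrix is the union of the spectra of its diagonal blocks, the Jacobian has spectrum $\sigma(A)\cup\{\delta\}$; as $A$ is Hurwitz (Assumption~\ref{hyp:1}) and $\delta>0$, there is an eigenvalue in the open right half-plane, so the saturating equilibrium is unstable for all $\alpha,\beta,k>0$.

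I do not anticipate any genuine obstacle: the argument is structurally identical to the one for the zero equilibrium, the only points requiring care being the sign bookkeeping in $\delta$ and the observation that it is the vanishing of $v(\beta-v)$ at $v=\beta$ — not any property of $A$ — that renders the linearization triangular. It is worth noting in passing that the strict upper bound $\mu<-CA^{-1}Bk\beta$ is genuinely used: at the boundary $\mu=-CA^{-1}Bk\beta$ the saturating and positive equilibria coalesce and $\delta=0$, so a separate (center-manifold type) analysis would be needed in that degenerate case, which is excluded by the open-interval assumption.
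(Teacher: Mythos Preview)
Your proposal is correct and follows essentially the same approach as the paper: linearize the closed loop at the saturating equilibrium, obtain a block upper-triangular Jacobian with $(2,2)$ entry $-\alpha(\mu+CA^{-1}Bk\beta)$, and use the hypothesis $\mu<-CA^{-1}Bk\beta$ to conclude that this entry is positive. Your write-up is in fact slightly more careful than the paper's own proof, which contains a sign slip in the verbal justification (it writes ``$\mu>-CA^{-1}Bk\beta$'' where ``$<$'' is meant), though the conclusion there is the same.
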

\begin{proof}
The linearized system about that equilibrium point is given by
\begin{equation}
  \begin{bmatrix}
    \dot{\tilde{x}}(t)\\
    \dot{\tilde{v}}(t)
  \end{bmatrix}=\begin{bmatrix}
    A & Bk\\
    0 & -\alpha(\mu+CA^{-1}Bk\beta)
  \end{bmatrix} \begin{bmatrix}
    \tilde{x}(t)\\
    \tilde{z}(t)
  \end{bmatrix}.
\end{equation}
Since $\mu>-CA^{-1}Bk\beta$ by assumption, then the term  $-\alpha(\mu+CA^{-1}Bk\beta)$ is positive and the equilibrium point is unstable.
\end{proof}

We prove below that the positive equilibrium point is locally exponentially stable provided that a condition is met:
\begin{proposition}\label{prop:stab_positive_2}
   \blue{Let us consider the closed-loop system consisting of the internally positive system \eqref{eq:mainsystL} and the controller \eqref{eq:ic3}}. Assume further that the system \eqref{eq:mainsystL} satisfies Assumption \ref{hyp:1} and that $\mu\in(0,-CA^{-1}Bk\beta)$. Then, the equilibrium point \eqref{eq:positive_eqpt_rat} is locally exponentially stable provided that $k\alpha\in(0,\bar{\xi})$ where
   \begin{equation}
      \bar{\xi}_\infty:=\sup\left\{\nu>0:\ \begin{bmatrix}
        A & B\\
          -\dfrac{\nu\beta}{4}C & 0
      \end{bmatrix}\ \textnormal{Hurwitz}\right\}.
    \end{equation}
    When the matrix is Hurwitz stable for all $\nu>0$, then $\bar{\xi}_\infty=\infty$. Moreover, we have that
    \begin{equation}
        \bar{\xi}_\infty=\dfrac{4\bar{\alpha}_\infty(\mu)\mu}{g\beta}.
    \end{equation}
\end{proposition}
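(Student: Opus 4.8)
The plan is to linearize the closed loop formed by \eqref{eq:mainsystL} and the logistic controller \eqref{eq:ic3} about the positive equilibrium \eqref{eq:positive_eqpt_rat}, to recognize that the resulting Jacobian has the same block structure as the one analyzed for the exponential controller in Proposition \ref{prop:stab_positive_1}, and then to reduce the whole statement to the already-established study of that matrix. Set $g:=-CA^{-1}B>0$ and $v^*:=\mu/(gk)$. The first step, which presents no difficulty, is to observe that the partial derivative of $\dot v=\frac{\alpha}{\beta}v(\beta-v)(\mu-Cx)$ with respect to $v$ carries the factor $\mu-Cx$, which vanishes at the equilibrium because $Cx^*=\mu$; only the $x$-derivative survives, so the Jacobian reads
\begin{equation}
J=\begin{bmatrix} A & Bk\\ -\dfrac{\alpha}{\beta}v^*(\beta-v^*)\,C & 0\end{bmatrix}.
\end{equation}
Under the standing hypothesis $0<\mu<-CA^{-1}Bk\beta$ one has $v^*\in(0,\beta)$, hence $v^*(\beta-v^*)\in(0,\beta^2/4]$, the maximum being attained at $v^*=\beta/2$.

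I would then apply the Schur determinant formula to obtain
\begin{equation}
\det(sI-J)=\det(sI-A)\Bigl(s+\rho\,C(sI-A)^{-1}B\Bigr),\qquad \rho:=\frac{k\alpha}{\beta}v^*(\beta-v^*)>0.
\end{equation}
This is exactly the characteristic polynomial of the exponential closed-loop Jacobian \eqref{eq:locstab1} once its effective loop gain $\alpha\mu/g$ is replaced by $\rho$; equivalently, $J$ is Hurwitz if and only if the matrix $\begin{bmatrix} A & B\\ -\frac{\nu\beta}{4}C & 0\end{bmatrix}$ is Hurwitz for the value $\nu:=4\rho/\beta=\frac{4k\alpha}{\beta^2}v^*(\beta-v^*)$, since both matrices share the characteristic polynomial $\det(sI-A)\bigl(s+\rho\,C(sI-A)^{-1}B\bigr)$. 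Because $v^*(\beta-v^*)\le\beta^2/4$ this value obeys $\nu\le k\alpha$, with equality only when $v^*=\beta/2$.

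To conclude I would invoke the interval property of the stability region that underlies Proposition \ref{prop:alpha_bar} and Theorem \ref{eq:mainconstructive}: the set of $\nu>0$ for which $\begin{bmatrix} A & B\\ -\frac{\nu\beta}{4}C & 0\end{bmatrix}$ is Hurwitz is the interval $(0,\bar\xi_\infty)$ (with $\bar\xi_\infty=\infty$ and a trivial argument when that matrix is Hurwitz for every $\nu>0$). Then $k\alpha\in(0,\bar\xi_\infty)$ forces $\nu\le k\alpha<\bar\xi_\infty$, so $J$ is Hurwitz and the positive equilibrium is locally exponentially stable; note that this makes $k\alpha<\bar\xi_\infty$ a sufficient but in general conservative condition. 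Finally, the identity $\bar\xi_\infty=\frac{4\bar\alpha_\infty(\mu)\mu}{g\beta}$ follows by matching the matrix defining $\bar\xi_\infty$ against the one defining $\bar\alpha_\infty(\mu)$ in \eqref{eq:baralpha1}, which using $CA^{-1}B=-g$ reads $\begin{bmatrix}A & B\\ -\frac{\nu'\mu}{g}C & 0\end{bmatrix}$: the two coincide under the reparametrization $\nu'=\frac{g\beta}{4\mu}\nu$, hence their Hurwitz thresholds satisfy $\frac{g\beta}{4\mu}\bar\xi_\infty=\bar\alpha_\infty(\mu)$.

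The only genuinely delicate point is this interval property, i.e.\ justifying that the supremum over Hurwitz values of $\nu$ coincides with ``Hurwitz for all smaller $\nu$'' --- not automatic for a generic one-parameter matrix family, but precisely the output of the stability-crossing/root-locus analysis already carried out: a single branch escapes to $-\infty$ along the real axis and, by the crossing computation of Proposition \ref{prop:alpha_bar}, no root re-enters the left half-plane once it has crossed, so the stability set is connected. All the remaining ingredients --- the linearization, the vanishing of $\partial\dot v/\partial v$, the Schur factorization, and the bound $v(\beta-v)\le\beta^2/4$ --- are routine.
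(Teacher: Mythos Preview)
Your proposal is correct and follows essentially the same route as the paper: linearize to obtain the Jacobian $J=\begin{bmatrix}A & Bk\\ -\frac{\alpha}{\beta}v^*(\beta-v^*)C & 0\end{bmatrix}$, bound $v^*(\beta-v^*)\le\beta^2/4$, and use the interval property of the stability region to conclude that the worst case $k\alpha$ controls all admissible $\mu$. The paper's proof is terser---it simply asserts that the lower-left entry lies in $[-\alpha\beta/4,0)$ and that Hurwitz stability at the endpoint implies Hurwitz stability throughout, closing with ``a coordinate change yields the result''---whereas you make the Schur factorization, the identification with the exponential-controller Jacobian, the interval property, and the derivation of $\bar\xi_\infty=4\bar\alpha_\infty(\mu)\mu/(g\beta)$ all explicit; these are exactly the steps the paper leaves to the reader.
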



\begin{proof}
The linearized system about that equilibrium point is given by
\begin{equation}
  \begin{bmatrix}
    \dot{\tilde{x}}(t)\\
    \dot{\tilde{v}}(t)
  \end{bmatrix}=\begin{bmatrix}
    A & Bk\\
    -\dfrac{\alpha}{\beta}v^*(\beta-v^*)C &0
  \end{bmatrix} \begin{bmatrix}
    \tilde{x}(t)\\
    \tilde{z}(t)
  \end{bmatrix}
\end{equation}
where $v^*=-\mu/CA^{-1}Bk$. Since $0<\mu<-CA^{-1}Bk\beta$, then $\beta-v^*>0$ and therefore $    -\dfrac{\alpha}{\beta}v^*(\beta-v^*)<0$. The lower-left term lies in the interval $[-\alpha\beta/4, 0)$, therefore if $k$ and $\alpha$ are chosen such that the matrix
\begin{equation}
\begin{bmatrix}
    A & Bk\\
    -\dfrac{\alpha\beta}{4}C &0
  \end{bmatrix}
\end{equation}
is Hurwitz stable, then we know that for any $0<\mu<-CA^{-1}Bk\beta$, the equilibrium will be locally asymptotically stable. A coordinate change yields the result.
\end{proof}

\subsection{Examples}

\noindent\textbf{Gene expression.} We consider back the gene expression system \eqref{eq:geneexpression} and we can compute $\bar{\xi}_\infty$ using the formula in Proposition \ref{prop:stab_positive_2} to get that
\begin{equation}
  \bar\xi_\infty=\dfrac{4\gamma_1\gamma_2(\gamma_1+\gamma_2)}{\beta k_2}.
\end{equation}

\begin{figure}[H]
  \centering
  \includegraphics[width=0.7\textwidth]{./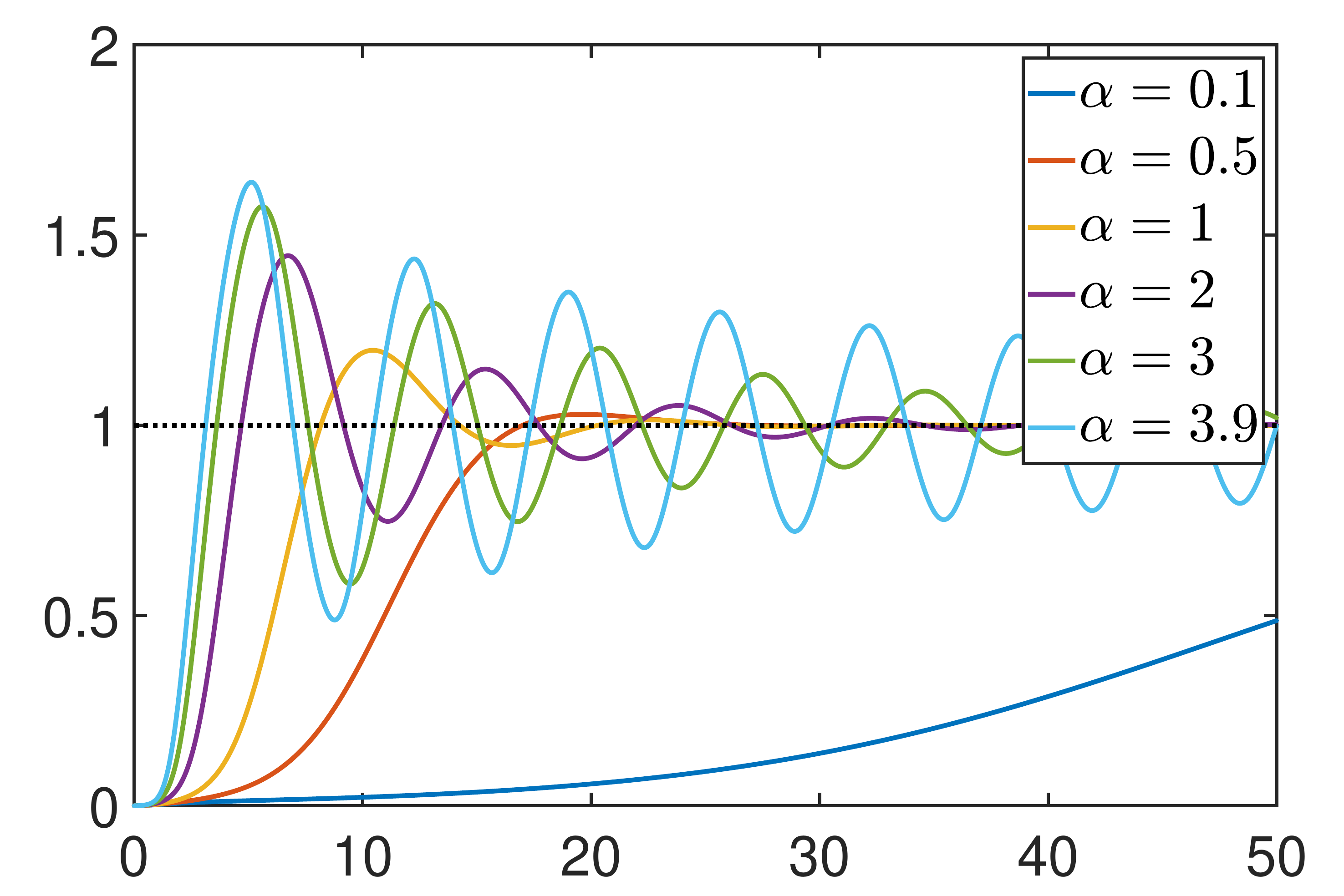}
  \caption{Output of the controlled gene expression system with protein maturation \eqref{eq:example_th} with the logistic integral controller \eqref{eq:ic3} for $k=1$ and several values for $\alpha$.}\label{fig:output_linear_sigmoidal}
\end{figure}

\begin{figure}[H]
  \centering
  \includegraphics[width=0.7\textwidth]{./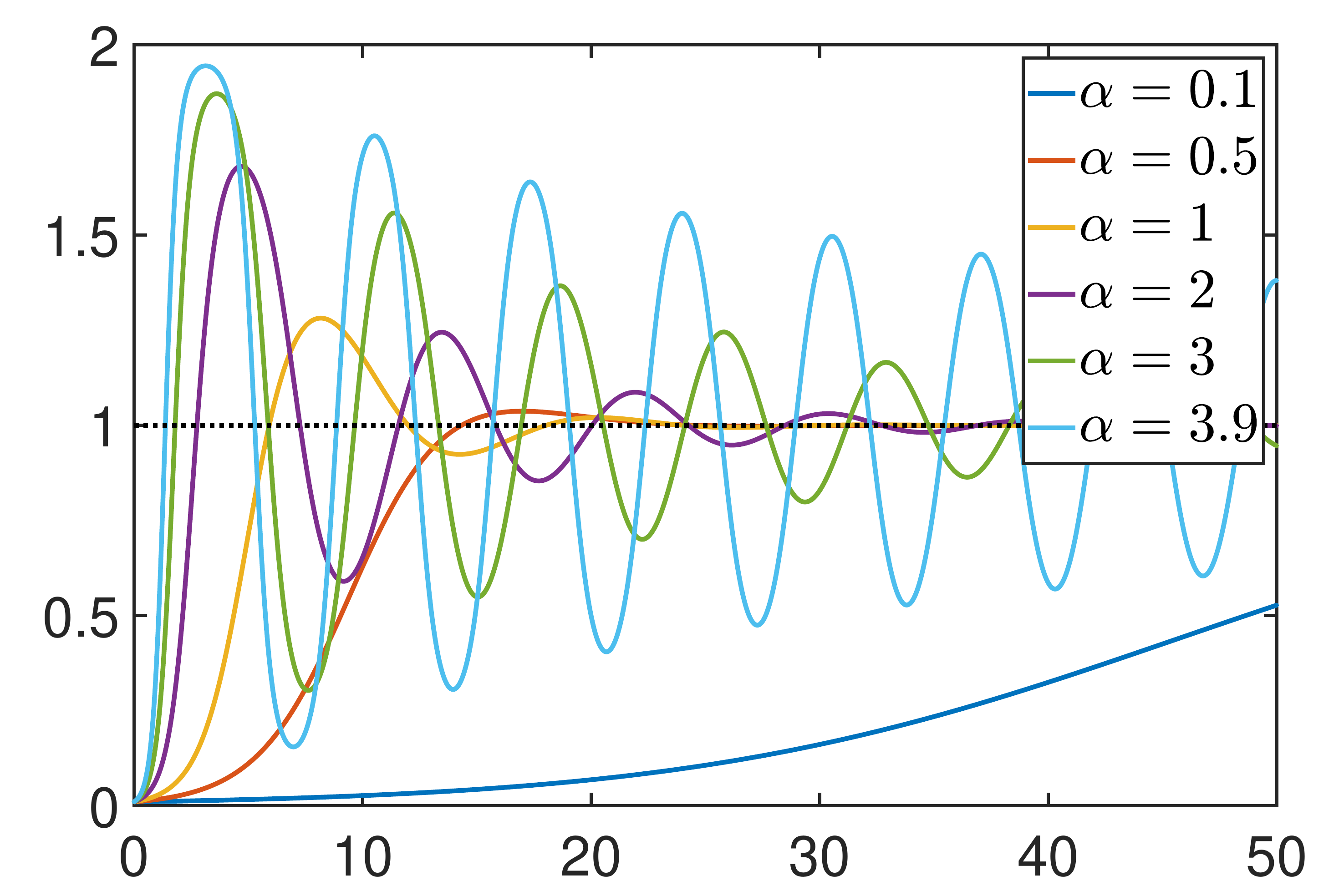}
  \caption{State of the integrator of the controlled gene expression system with protein maturation \eqref{eq:example_th} and the logistic integral controller \eqref{eq:ic3} for $k=1$ and several values for $\alpha$.}\label{fig:v_linear_sigmoidal}
\end{figure}

\begin{figure}[H]
  \centering
  \includegraphics[width=0.7\textwidth]{./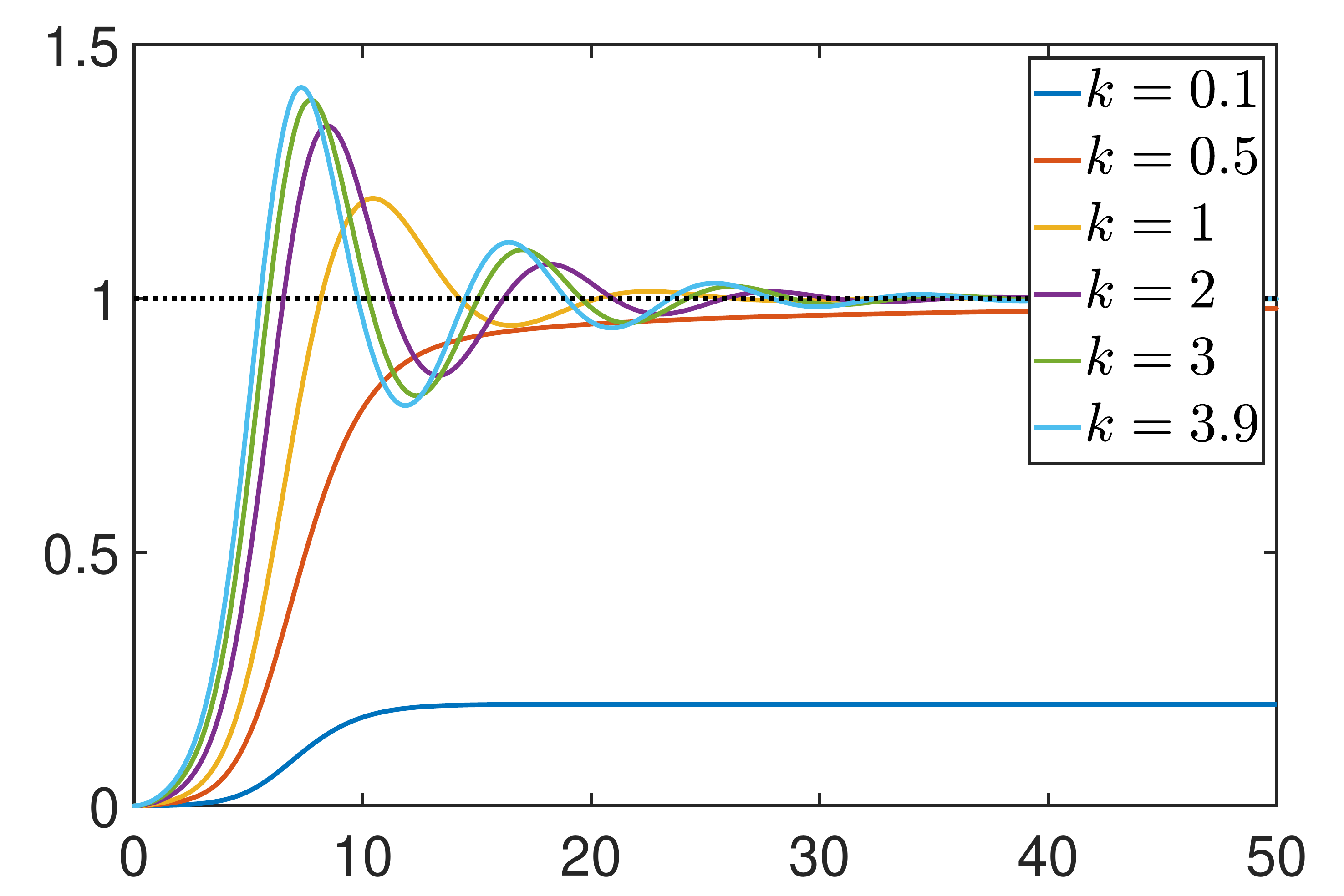}
  \caption{Output of the controlled gene expression system with protein maturation \eqref{eq:example_th} with the logistic integral controller \eqref{eq:ic3} for $\alpha=1$ and several values for $k$.}\label{fig:output_linear_sigmoidal2}
\end{figure}

\noindent\textbf{Gene expression with protein maturation.} Let us consider back the gene expression network with protein maturation \eqref{eq:example_th}. We can apply the same method as for the exponential integral controller to compute the value of $\bar \xi_\infty$. We notably obtain
%
\begin{equation}
  \bar{\xi}_\infty=\dfrac{4}{\beta}\dfrac{\gamma_1\gamma_3(\gamma_1+ \gamma_3)(\gamma_2 + k_3)(\gamma_1 + \gamma_2 + k_3)(\gamma_2 + \gamma_3 + k_3)}{k_2k_3(\gamma_1 + \gamma_2 + \gamma_3 + k_3)^2}.
\end{equation}

\section{Discussion}\label{sec:discussion}

Two approaches have been presented for the derivation of positive integral controllers. The first one is based on the consideration of an antithetic integral controller previously introduced in \cite{Briat:15e} in the context of the integral control of biochemical networks. The second one is based on the use of regularizing functions placed between the output of the controller and the input of system. Several local stability conditions as well as other qualitative results have been obtained.

In the present paper, internally positive systems have been extensively considered. However, the main underlying requirement was that the input and output signals be nonnegative at all times, which is a weaker property than internal positivity. This property is  referred to as \emph{external positivity} in the literature and is more difficult to characterize than internal positivity; see e.g. \cite{Bell:10}. In this respect, it might be possible that some of the obtained results extend to that class of systems. A less straightforward extension of those results would be the consideration of linear systems having a system matrix having a multiple or not eigenvalue at zero or, in the nonlinear setting, Jacobian matrix having eigenvalues on the imaginary axis.

 Other interesting extensions of this work include elucidating the question of whether the equilibrium point of the closed-loop system is also globally asymptotically stable under the same conditions. An obvious, yet tedious, approach would be the construction of a Lyapunov function. So, alternative, more generic, approaches may be interesting to consider instead. The application of the logistic  regularizing function to the control of nonlinear polynomial or rational systems subject to input saturation would also be of interest as this would allow for the use of efficient algorithms relying, for instance, on sum of squares programming. All these problems are left for future research.



\end{document}